\newcolumntype{P}[1]{>{\raggedright\let\newline\\\arraybackslash\hspace{0pt}}m{#1}}
\DeclareFontFamily{U}{mathx}{\hyphenchar\font45}
\DeclareFontShape{U}{mathx}{m}{n}{<-> mathx10}{}
\DeclareSymbolFont{mathx}{U}{mathx}{m}{n}
\DeclareMathAccent{\widebar}{0}{mathx}{"73}
\newtheorem{proposition}{Proposition}[section]
\newtheorem{theorem}[proposition]{Theorem}
\newtheorem{corollary}[proposition]{Corollary}
\newtheorem{lemma}[proposition]{Lemma}
\newtheorem{prop}[proposition]{Proposition}
\newtheorem{cor}[proposition]{Corollary}
\newtheorem{thm}[proposition]{Theorem}
\theoremstyle{definition}
\newtheorem{example}[proposition]{Example}
\theoremstyle{remark}
\newtheorem{remark}[proposition]{Remark}
\numberwithin{equation}{section}
\newcommand{\margincolor}{red}      
\definecolor{darkgreen}{rgb}{0,0.7,0}
\newcounter{margincounter}
\newcommand{\marginnum}{
\ifnum\value{margincounter}<10
\textcolor{\margincolor}{\begin{picture}(0,0)\put(2.2,2.4){\circle{9}}\end{picture}\footnotesize\arabic{margincounter}}
\else\ifnum\value{margincounter}<100
\textcolor{\margincolor}{\begin{picture}(0,0)\put(4.256,2.5){\circle{11}}\end{picture}\footnotesize\arabic{margincounter}}
\else
\textcolor{\margincolor}{\begin{picture}(0,0)\put(6.8,2.5){\circle{14}}\end{picture}\footnotesize\arabic{margincounter}}
\fi\fi
}
\newcommand{\newword}[1]{\textbf{\emph{#1}}}
\newcommand{\integers}{\mathbb Z}
\newcommand{\rationals}{\mathbb Q}
\newcommand{\reals}{\mathbb R}
\newcommand{\ep}{\varepsilon}
\newcommand{\thet}{\vartheta}
\newcommand{\Ram}{{\operatorname{Ram}}}
\newcommand{\uf}{{\operatorname{uf}}}
\newcommand{\fr}{{\operatorname{fr}}}
\newcommand{\cov}{\mathrm{cov}}
\newcommand{\set}[1]{{\left\lbrace #1 \right\rbrace}}
\newcommand{\br}[1]{{\langle #1 \rangle}}
\newcommand{\A}{{\mathcal A}}
\newcommand{\F}{{\mathcal F}}
\newcommand{\D}{{\mathfrak D}}
\newcommand{\N}{{\mathcal N}}
\newcommand{\p}{{\mathfrak p}}
\newcommand{\ck}{\spcheck}
\newcommand{\0}{{\mathbf{0}}}
\newcommand{\Cone}{\mathrm{Cone}}
\newcommand{\Star}{\mathrm{Star}}
\newcommand{\Lin}{\mathrm{Lin}}
\DeclareMathOperator{\Span}{Span}
\newcommand{\g}{\mathbf{g}}
\newcommand{\s}{\mathbf{s}}
\newcommand{\m}{\mathbf{m}}
\renewcommand{\c}{\mathbf{c}}
\renewcommand{\k}{\mathbbm{k}}
\renewcommand{\a}{\mathbf{a}}
\newcommand{\tB}{\tilde{B}}
\newcommand{\M}{\mathcal{M}}
\newcommand{\Supp}{\operatorname{Supp}}
\newcommand{\Clear}{\operatorname{Clear}}
\newcommand{\Hom}{\operatorname{Hom}}
\newcommand{\Scat}{\operatorname{Scat}}
\newcommand{\Fan}{\operatorname{Fan}}
\newcommand{\ScatFan}{\operatorname{ScatFan}}
\newcommand{\ChamberFan}{\operatorname{ChamberFan}}
\newcommand{\gFan}{\g\!\operatorname{Fan}}
\newcommand{\CambScat}{\operatorname{CambScat}}
\newcommand{\Nar}{\operatorname{Nar}}
\newcommand{\re}{\mathrm{re}}
\newcommand{\im}{\mathrm{im}}
\renewcommand{\d}{{\mathfrak d}}
\newcommand{\seg}[1]{\overline{#1}}
\newcommand{\hy}{\hat{y}}
\title{A combinatorial approach to scattering diagrams}
\author{Nathan Reading}
\thanks{Partially supported by the National Science Foundation under Grant Number DMS-1500949.}
\subjclass[2010]{13F60, 14N35, 05E10, 05A15, 20F55}
\begin{document}

\begin{abstract}
Scattering diagrams arose in the context of mirror symmetry, but a special class of scattering diagrams (the cluster scattering diagrams) were recently developed to prove key structural results on cluster algebras.
We use the connection to cluster algebras to calculate the function attached to the limiting wall of a rank-2 cluster scattering diagram of affine type.
In the skew-symmetric rank-2 affine case, this recovers a formula due to Reineke.
In the same case, we show that the generating function for signed Narayana numbers appears in a role analogous to a cluster variable.
In acyclic finite type, we construct cluster scattering diagrams of acyclic finite type from Cambrian fans and sortable elements, with a simple direct proof.
\end{abstract}
\maketitle


\setcounter{tocdepth}{1}
\tableofcontents


\section{Introduction}\label{intro sec}  
In this paper, we demonstrate cluster-algebraic and Coxeter/root-theoretic approaches to the construction of cluster scattering diagrams, and prove results that relate cluster scattering diagrams to classical objects in algebraic combinatorics.
A crucial ingredient is the connection made in~\cite{GHKK} between scattering diagrams and cluster algebras.
We begin by outlining an approach to scattering diagrams from the direction of Coxeter groups/root systems.
A standard approach can be found in~\cite{GHKK,KS}, while~\cite{scatfan} serves as a bridge between the two approaches.

Among the defining data for either a cluster algebra or a scattering diagram is a skew-symmetrizable integer matrix $B$ called an exchange matrix.   
In Section~\ref{rk2 sec}, we consider the special case where the rank of $B$ is~$2$.
Results of \cite{GHKK} and easy, known formulas for $\g$-vectors in rank $2$ reveal the entire cluster scattering diagram in the rank-$2$ \emph{affine} case except for the function attached to the ``limiting'' wall.
We compute this function in Theorem~\ref{rk2 aff formula}.
The skew-symmetric case of Theorem~\ref{rk2 aff formula} was established using representation theory by Reineke~\cite[Section~6]{Reineke2}.
The non-skew-symmetric case of Theorem~\ref{rk2 aff formula} may be new.

Our proof of Theorem~\ref{rk2 aff formula} expresses the function attached to the limiting wall as a limit of ratios of (powers of) adjacent $F$-polynomials, using the key observation that cluster variables can be obtained as path-ordered products.
We also calculate, in the skew-symmetric affine case, a path-ordered product related to this key observation.
Specifically, the formula for a cluster variable in terms of a path-ordered product takes as input the $\g$-vector of the cluster variable.
We compute the analogous path-ordered product for the ``limiting'' $\g$-vector, which is not the $\g$-vector of a cluster variable, and find a surprising appearance (Theorem~\ref{Nar thm}) of the classical Narayana numbers.
The proof of Theorem~\ref{Nar thm} shows that this path-ordered product is the limit of ratios of adjacent cluster variables and exploits a symmetry of the cluster algebra to establish a functional equation for the limit.
We conclude our discussion of rank $2$ with some examples of the computation of theta functions.

In Section~\ref{camb sec}, we construct cluster scattering diagrams in the acyclic finite-type case using Cambrian fans \cite{camb_fan}.
See Theorem~\ref{camb scat thm}.
As explained in Remark~\ref{g method}, the result can be verified by concatenating two results, one that connects cluster scattering diagrams to $\g$-vector fans and one that connects Cambrian fans to $\g$-vector fans, but here we verify it directly using the combinatorics of sortable elements.
We also show, in Corollary~\ref{shard scat}, how to use shards~\cite{shardint} to make a cluster scattering diagram with exactly one wall in each reflecting hyperplane.
For a representation-theoretic approach in the skew-symmetric case, see \cite[Example~10.3]{GHKK}.

As noted above, cluster scattering diagrams of rank $2$ and cluster scattering diagrams of acyclic finite type have been constructed previously in the special case where $B$ is skew-symmetric.
However, even in the skew-symmetric case, our methods of proof are new (or in the case of the Cambrian constructions, have not previously been applied to scattering diagrams). 
We view the construction of cluster scattering diagrams as a combinatorial problem about root systems.
Our aim has been to solve the problem using only the combinatorics of root systems and Coxeter groups and the most basic cluster algebra recursions.

In the cluster algebras literature, there are two different conventions on how to ``extend'' $B$ to add ``coefficients'' to the cluster algebra:  Either by adjoining extra rows to make a ``tall'' matrix or adjoining extra columns to make a ``wide'' matrix.
The difference amounts to replacing $B$ by its transpose $B^T$.
The scattering diagram constructions in \cite{GHKK} fit into the wide-matrix convention, while one of the foundational cluster algebras papers \cite{ca4} uses tall matrices.
An exposition of scattering diagrams in the wide-matrix convention, taking substantially the same point of view as the present paper, is available in \cite{scatfan}.
Here, we rework the definition of scattering diagrams in the tall-matrix convention, at the same time further specializing to principal coefficients.
This allows us to relate scattering diagram results directly to cluster algebra results and constructions from \cite{ca4}, and also serves as a fairly self-contained account of scattering diagrams in the tall-matrix setting.
Because both conventions are prevalent and useful, our terminology and notation consistently identifies the tall-matrix scattering diagrams as ``transposed'' scattering diagrams.
(Replacing $B$ by $-B^T$ corresponds to passing to the Langlands dual in the sense of \cite[Section~1.2]{FG} or \cite[Appendix~A]{GHKK}.
The difference in sign between $B^T$ and $-B^T$ is not very consequential---see Proposition~\ref{scat antip}---so in essence we are applying the constructions of~\cite{GHKK} to the Langlands dual seed.)


\section{Transposed cluster scattering diagrams with principal coefficients}\label{transp sec}
In this section, we begin with an exchange matrix and construct a cluster scattering diagram with principal coefficients.
We introduce a global transpose in order to make a cluster monomial $\thet_{m_0}$ have $\g$-vector $m_0$, in the tall-extended-exchange-matrix sense of \cite{ca4} (as discussed in the Introduction).
A useful side effect of the transpose is that in acyclic finite type the scattering fan for an exchange matrix coincides with the Cambrian fan.
The connection to the Cambrian fan will be made in Section~\ref{camb sec}.
We also use root and weight lattices as part of our initial data.
Some motivation for this choice is provided by a theorem of \cite{Bridgeland,GHKK}, quoted below as Theorem~\ref{root thm}, and by the Cambrian fan construction in acyclic finite type.
Except for the transpose and the language of root and weight lattices, our implementation of principal coefficients follows \cite[Appendix~B]{GHKK}.

\subsection{Exchange matrices, root systems and Coxeter groups}\label{back sec}
We start with an \newword{exchange matrix} $B=[b_{ij}]$, a square integer matrix indexed by $\set{1,\ldots,n}$ that is \newword{skew-symmetrizable}, meaning that there exist real numbers $\delta_i$ such that $\delta_ib_{ij}=-\delta_jb_{ji}$ for all $i,j\in\set{1,\ldots,n}$.
We choose the $\delta_i$ so that $\delta_i^{-1}$ is an integer for each $i$ and $\gcd(\delta_i^{-1}:i\in\set{1,\ldots,n})=1$.
We can do this because $B$ is an integer matrix.
We follow the usual convention and call $n$ the \newword{rank} of $B$.
This conflicts with the usual definition of rank, but not badly:
The constructions considered here require that $B$ be extended, by adjoining rows or columns as discussed above, to obtain a matrix of rank $n$ in the usual sense.

Let $A$ be the \newword{Cartan matrix} associated to $B$.
That is, $A=[a_{ij}]$ where $a_{ii}=2$ for $i=1,\ldots,n$ and $a_{ij}=-|b_{ij}|$ for all distinct $i,j\in\set{1,\ldots,n}$.
In particular, $A$ is \newword{symmetrizable} because $\delta_ia_{ij}=\delta_ja_{ji}$ for all $i,j\in\set{1,\ldots,n}$.

Choose a real vector space $V$ with a distinguished basis $\alpha_1,\ldots,\alpha_n$ called the \newword{simple roots}.
The lattice $Q=\Span_\integers(\alpha_1,\ldots,\alpha_n)$ is called the \newword{root lattice}.
Define the \newword{simple co-roots} to be $\alpha_i\ck=\delta_i^{-1}\alpha_i$, so that $\alpha_1\ck,\ldots,\alpha_n\ck$ is another basis for $V$.
The lattice $Q\ck=\Span_\integers(\alpha_1\ck,\ldots,\alpha_n\ck)$ is the \newword{co-root lattice}.
Since the $\delta_i$ were chosen so that each $\delta_i^{-1}$ is an integer, $Q\ck$ is a sublattice of $Q$ of finite index.

Given a primitive vector $\beta$ in $Q$ (an element $\beta\in Q$ not equal to $k\beta'$ for $k>1$ and $\beta'\in Q$), write $\beta\ck$ for the primitive vector in $Q\ck$ that is a positive scaling of $\beta$.
Given primitive $\beta\ck\in Q\ck$, write $\beta$ for the corresponding primitive vector in $Q$.

Let $K:V\times V\to\reals$ be the bilinear form defined, in the basis of simple roots on the right and simple co-roots on the left, by $K(\alpha_i\ck,\alpha_j)=a_{ij}$.
This restricts to an integer-valued form $K:Q\ck\times Q\to\integers$.
The form is symmetric because 
\[K(\alpha_i,\alpha_j)=\delta_iK(\alpha_i\ck,\alpha_j)=\delta_ia_{ij}=\delta_ja_{ji}=\delta_jK(\alpha_j\ck,\alpha_i)=K(\alpha_j,\alpha_i).\]
Let $\omega:V\times V\to\reals$ be the bilinear form defined by $\omega(\alpha_i\ck,\alpha_j)=b_{ij}$.
This takes integer values on $Q\ck\times Q$ and is skew-symmetric by a similar calculation.

Let $V^*$ be the dual vector space to $V$ and let $\br{\,\cdot\,,\,\cdot\,}:V^*\times V\to\reals$ be the usual pairing.
Define the \newword{fundamental weights} to be the basis $\rho_1,\ldots,\rho_n$ for $V^*$ that is dual to $\alpha_1\ck,\ldots,\alpha_n\ck$, in the sense that $[\br{\rho_i,\alpha\ck_j}]$ is the identity matrix.
(The fundamental weights are dual to the simple \emph{co}-roots, not the simple roots.)
The lattice $P=\Span_\integers(\rho_1,\ldots,\rho_n)$ is called the \newword{weight lattice}.
Define the \newword{fundamental co-weights} to be the basis $\rho_1\ck,\ldots,\rho_n\ck$ for $V^*$ that is dual to $\alpha_1,\ldots,\alpha_n$.
We have $\rho_i=\delta_i\rho_i\ck$ for all $i$.
The lattice $P\ck=\Span_\integers(\rho_1\ck,\ldots,\rho_n\ck)$ is the \newword{co-weight lattice}.
Since each $\delta_i^{-1}$ is an integer, $P$ is a superlattice of $P\ck$ of finite index.

The \newword{dominant chamber} in $V^*$ is the full-dimensional simplicial cone
\begin{equation}\label{D def}
D=\bigcap_{i=1}^n \set{p\in V^*: \br{p,\alpha_i}\ge 0}.
\end{equation}
Equivalently, $D$ is the nonnegative real span of the fundamental weights or of the fundamental co-weights.

For each $i=1,\ldots,n$, define $s_i$ to be the reflection on $V$ given by $s_i(\alpha_j)=\alpha_j-a_{ij}\alpha_i$.
The set $S$ of \newword{simple reflections} $s_i$ generates a \newword{Coxeter group} $W$.
(More precisely, $(W,S)$ is a Coxeter system.)
The action of $s_i$ on $V$ defines an action on $V^*$ in the usual way.
Namely, $s_i$ sends $\lambda\in V^*$ to the unique vector $s_i\lambda\in V^*$ such that $\br{s_i\lambda,s_i\beta}=\br{\lambda,\beta}$ for all $\beta\in V$.

\begin{figure}
\scalebox{0.81}{
\begin{tabular}{ccl}
\\[-7pt]
\raisebox{30pt}{\begin{tabular}{c}
$\begin{bsmallmatrix}2&0\\0&2\end{bsmallmatrix}$\,\,\,\\
$A_1\times A_1$
\end{tabular}}&\hspace{-10pt}
\scalebox{0.93}{\begin{picture}(0,0)(-40,-40)
                \put(47,-1){$\alpha_1$}
                \put(-1,45){$\alpha_2$}
        \end{picture}
        \includegraphics{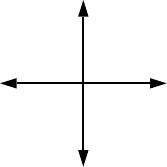}}
&
\scalebox{0.93}{\begin{picture}(0,0)(-20,-40)
                \put(49,-1.5){\textcolor{red}{$\alpha\ck_1$} $=\alpha_1$}
                \put(-15.5,45){\textcolor{red}{$\alpha_2\ck$} $=\alpha_2$}
        \end{picture}
        \includegraphics{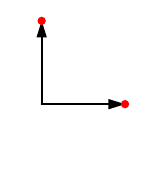}}
\\[0pt]
\raisebox{30pt}{\begin{tabular}{c}
$\begin{bsmallmatrix}\,\,\,\,2&-1\\-1&\,\,\,\,2\end{bsmallmatrix}$\\
\,\,\,$A_2$
\end{tabular}}&\hspace{-10pt}
\scalebox{0.93}{\raisebox{5pt}{\begin{picture}(0,0)(-40,-35)
                \put(46,-1){$\alpha_1$}
                \put(-30,30){$\alpha_2$}
                \put(26,30){$\alpha_1+\alpha_2$}
        \end{picture}
\includegraphics{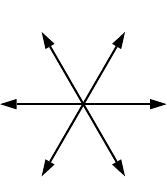}}}
&
\scalebox{0.93}{\raisebox{5pt}{\begin{picture}(0,0)(-20,-40)
                \put(49,-1.5){\textcolor{red}{$\alpha\ck_1$} $=\alpha_1$}
                \put(-15.5,45){\textcolor{red}{$\alpha_2\ck$} $=\alpha_2$}
                \put(28,41){\rotatebox{45}{\textcolor{red}{$\alpha_1\ck+\alpha_2\ck$}}}
                \put(42,30){\rotatebox{45}{$=\alpha_1+\alpha_2$}}
        \end{picture}
\includegraphics{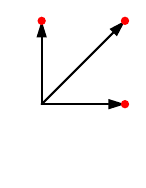}}}
\\[0pt]
\raisebox{30pt}{\begin{tabular}{c}
$\begin{bsmallmatrix}\,\,\,\,2&-1\\-2&\,\,\,\,2\end{bsmallmatrix}$\\
\,\,\,$B_2$
\end{tabular}}&\hspace{-10pt}
\scalebox{0.93}{\begin{picture}(0,0)(-40,-40)
                \put(47,-1){$\alpha_1$}
                \put(-31,20){$\alpha_2$}
                \put(28,20){$\alpha_1+\alpha_2$}
                \put(8,36){$\alpha_1+2\alpha_2$}
        \end{picture}
\includegraphics{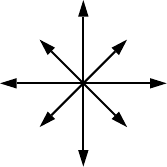}}
&
\scalebox{0.93}{\begin{picture}(0,0)(-20,-40)
                \put(50,-1.5){\textcolor{red}{$\alpha_1\ck$} $=\alpha_1$}
                \put(-11,18){$\alpha_2$}
                \put(-11,39){\textcolor{red}{$\alpha_2\ck$}}
                \put(28,41){\rotatebox{45}{\textcolor{red}{$\alpha_1\ck+\alpha_2\ck$}}}
                \put(42,30){\rotatebox{45}{$=\alpha_1+2\alpha_2$}}
               \put(86,41){\rotatebox{26.6}{\textcolor{red}{$2\alpha_1\ck+\alpha_2\ck$}}}
                \put(40,11){\rotatebox{26.6}{$\alpha_1+\alpha_2$}}
        \end{picture}
\includegraphics{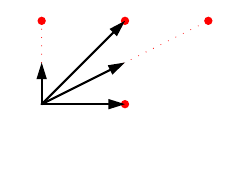}}
\\[-15pt]
\raisebox{50pt}{\begin{tabular}{c}
$\begin{bsmallmatrix}\,\,\,\,2&-1\\-3&\,\,\,\,2\end{bsmallmatrix}$\\
\,\,\,$G_2$
\end{tabular}}&\hspace{-10pt}
\scalebox{0.93}{\begin{picture}(0,0)(-76,-65)
                \put(80,-2){$\alpha_1$}
                \put(-46,22){$\alpha_2$}
                \put(44,22){$\alpha_1+\alpha_2$}
                \put(-75,62){$\alpha_1+3\alpha_2$}
                \put(-13,47){$\alpha_1+2\alpha_2$}
                \put(44,62){$2\alpha_1+3\alpha_2$}
        \end{picture}
\includegraphics{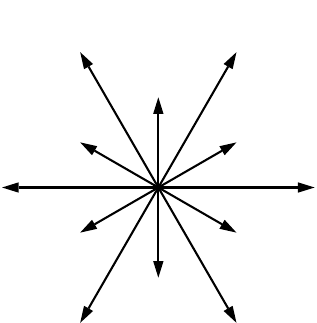}\hspace{0pt}}
&\hspace{0pt}
\scalebox{0.93}{\begin{picture}(0,0)(-20,-65)
                \put(50,-1.5){\textcolor{red}{$\alpha_1\ck$} $=\alpha_1$}
                \put(-11,18){$\alpha_2$}
                \put(-11,39){\textcolor{red}{$\alpha_2\ck$}}
                \put(28,41){\rotatebox{45}{\textcolor{red}{$\alpha_1\ck+\alpha_2\ck$}}}
                \put(45,43){\rotatebox{45}{$=\alpha_1+3\alpha_2$}}
               \put(83,43){\rotatebox{26.6}{\textcolor{red}{$2\alpha_1\ck+\alpha_2\ck$}}}
               \put(83,31){\rotatebox{26.6}{$=2\alpha_1+3\alpha_2$}}
                \put(40,5){\rotatebox{18.4}{$\alpha_1+\alpha_2$}}
               \put(40,29){\rotatebox{33.7}{$\alpha_1+2\alpha_2$}}
                \put(95,20){\rotatebox{18.4}{\textcolor{red}{$3\alpha_1\ck+\alpha_2\ck$}}}
               \put(95,70){\rotatebox{33.7}{\textcolor{red}{$3\alpha_1\ck+2\alpha_2\ck$}}}
        \end{picture}
\includegraphics{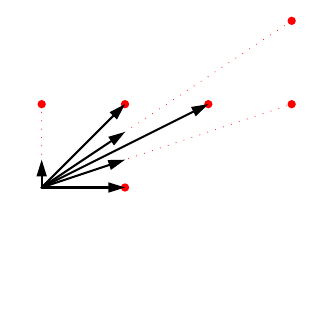}\hspace{0pt}}
\end{tabular}
}
\caption{Finite crystallographic root systems with $n=2$}
\label{rk2roots}
\end{figure}

The \newword{real roots} $\Phi^\re$ associated to $A$ are the vectors of the form $w\alpha_i$ for $w\in W$ and $i=1,\ldots,n$.
When $\Phi^\re$ is infinite, there are also \newword{imaginary} roots $\Phi^\im$ associated to $A$, which we need not define here.
The \newword{root system} associated to $A$ is the disjoint union $\Phi=\Phi^\re\cup\Phi^\im$.
(Root systems as we have defined them are sometimes called \newword{crystallographic}; there is a more general notion that does not concern us here.)
The root system $\Phi$ is also a disjoint union $\Phi=\Phi_+\cup\Phi_-$ such that each $\beta\in\Phi_+$ is a nonnegative linear combination of simple roots and each $\beta\in\Phi_-$ is a nonpositive linear combination of simple roots.
The roots in $\Phi_+$ are called the \newword{positive roots}.
We write $Q^+$ for the subset of $Q$ consisting of nonzero vectors obtained as nonnegative integer combinations of simple roots. 

The \newword{real co-roots} 
are the vectors of the form $w\alpha\ck_i$ for $w\in W$ and $i=1,\ldots,n$.
The vector $\beta\ck$ is a real co-root if and only if its scaling $\beta$ is a real root.

Figure~\ref{rk2roots} shows the finite (crystallographic) root systems for $n=2$, with their Cartan matrices and types.
The pictures in the middle column are drawn so that the form $K$ agrees with the usual Euclidean metric on the plane of the page.
The column on the right shows the positive roots and co-roots in a less conventional way:
The simple co-roots point right and up (and have the same length in the sense of the page).
Co-roots are shown as red dots, while roots are shown as arrows.

\begin{remark}\label{my way is better}
We have placed roots and co-roots in the same vector space and placed weights and co-weights in the dual space.
It is common (for example in the theory of Kac-Moody Lie algebras as in \cite{Kac}) to place roots and weights in the same vector space, place co-roots and co-weights in the dual space, and let the natural pairing play the role that we have given to $K$.
The approach here agrees with our approach in earlier papers, including \cite{typefree,framework,afframe,cyclica,affdenom} and eliminates the need to enlarge the vector spaces. 
Most importantly, the present approach lines up perfectly with the definition of scattering diagrams in \cite{GHKK}.
\end{remark}

\subsection{Principal-coefficients transposed scattering diagrams in root notation}\label{prin trans sec}
Table~\ref{scat root data} describes the initial data for a transposed scattering diagram with principal coefficients.
The only input is an exchange matrix $B=[b_{ij}]$, from which we extract a Cartan matrix and make the definitions of Section~\ref{back sec}.
\begin{longtable}{P{70pt}|P{260pt}}
\caption{Initial data for a transposed scattering diagram with principal coefficients}\label{scat root data}\\
Notation&Description/requirements\\\hline\hline
\endfirsthead
\caption{(continued)}\\
Notation&Description/requirements\\\hline\hline
\endhead
$Q^+$&$\set{\sum_{i=1,\ldots,n}a_i\alpha_i:a_i\in\integers,\,a_i\ge0,\,\sum_{i=1,\ldots,n}a_i>0}$\newline positive part of root lattice \\\hline
$x_1,\ldots,x_n$\newline $y_1,\ldots,y_n$&indeterminates\\\hline
$x^\lambda y^\beta$&$x_1^{a_1}\cdots x_n^{a_n}y_1^{c_1}\cdots y_n^{c_n}$ for $(\lambda,\beta)=(\sum_{i=1}^na_i\rho_i,\sum_{i=1}^nc_i\alpha_i)\in P\oplus Q$\\\hline
$\hy_1,\ldots,\hy_n$&$\hy_i=y_ix_1^{b_{1i}}\cdots x_n^{b_{ni}}$\\\hline
$x^\lambda \hy^\beta$&
$x_1^{a_1}\cdots x_n^{a_n}\hy_1^{c_1}\cdots\hy_n^{c_n}$ for $(\lambda,\beta)=(\sum_{i=1}^na_i\rho_i,\sum_{i=1}^nc_i\alpha_i)\in P\oplus Q$\\\hline
$\k$&a field of characteristic zero\\\hline
$\k[[\hy]]$&$\k[[\hy_1,\ldots,\hy_n]]$\\\hline
$\k[[x,\hy]]$&$\k[[x_1,\ldots,x_n,\hy_1,\ldots,\hy_n]]$\\\hline
$\m$&ideal in $\k[[x,\hy]]$ consisting of series with constant term zero\\\hline
\end{longtable}
As discussed in the Introduction, we work with a global transpose, relative to~\cite{GHKK}.
To avoid confusion, we will be explicit about this transpose in terminology and notation.
We also follow \cite{scatfan} in working in a lower-dimensional space than \cite{GHKK}.
For details, including an explanation of why the additional dimensions are unnecessary, see \cite[Remarks~2.1, 2.12,~2.13]{scatfan}.

For the purpose of comparison, Table~\ref{scat root dict} gives this initial data in the context of the more general setup of \cite{GHKK}.
Table~\ref{scat root dict} is designed for easy comparison with \cite[Table~1]{scatfan}.  
The general setup leaves several choices, and we have made these choices in ways that are natural to the root-system context. 
We see the explicit transpose in the line of Table~\ref{scat root dict} describing $\epsilon_{ij}$.
\begin{longtable}{P{88pt}|P{242pt}}
\caption{Initial data for a transposed scattering diagram with principal coefficients, in the context of the general setup}\label{scat root dict}\\
General setup&Transposed, principal coefficients\\\hline\hline
\endfirsthead
\caption{(continued)}\\
General setup&Transposed, principal coefficients\\\hline\hline
\endhead
$N$&$Q\oplus P$\quad (root lattice and weight lattice)\\\hline
$M=\Hom(N,\integers)$&$P\ck\oplus Q\ck$\quad (co-weight lattice and co-root lattice)\\\hline
$N_\uf\subseteq N$&Root lattice $Q$, identified with $Q\oplus\set{0}\subset Q\oplus P$\\\hline
$I$&Two copies of $\set{1,\ldots,n}$, one indexing the simple roots in $N$ and one indexing the fundamental weights in $N$\\\hline
$I_\uf\subseteq I$&$\set{1,\ldots,n}$, indexing the simple roots in $N$\\\hline
$\set{\,\cdot\,,\,\cdot\,}:N\times N\to\rationals$&$\set{(\beta_1,\lambda_1),(\beta_2,\lambda_2)}=-\omega(\beta_1,\beta_2)+\br{\lambda_2,\beta_1}-\br{\lambda_1,\beta_2}$\\\hline
$N^\circ\subseteq N$&$Q\ck\oplus P\ck$\quad (co-root lattice and co-weight lattice)\\\hline
$M^\circ=\Hom(N^\circ,\integers)$&$P\oplus Q$\quad (weight lattice and root lattice)\\\hline
$d_i$&$d_i=\delta_i^{-1}=\frac{K(\alpha_i,\alpha_i)}2=\frac2{K(\alpha_i\ck,\alpha_i\ck)}$ \quad so $\alpha_i\ck=d_i\alpha_i$\newline
$d_i$ same on both copies of $\set{1,\ldots,n}$, so $\rho_i\ck=d_i\rho_i$\\\hline
$\s=(e_i:i\in I)$&$\s=(\alpha_1,\ldots,\alpha_n,\rho_1,\ldots,\rho_n)$\quad basis for $Q\oplus P$\\\hline
$N^+=N^+_\s$&$Q^+=\set{\sum_{i=1}^na_i\alpha_i:a_i\in\integers,\,a_i\ge0,\,\sum_{i=1}^na_i>0}$\newline positive part of root lattice \\\hline
$[\,\cdot\,,\,\cdot\,]_\s:N\times N\to\rationals$&
$[\alpha_i,\alpha_j]_\s=\set{\alpha_i,\alpha_j\ck}=\omega(\alpha_j\ck,\alpha_i)=b_{ji}$\newline
$[\alpha_i,\rho_j]_\s=\br{\rho_j\ck,\alpha_i}=\delta_{ij}$ (Kronecker delta)\newline
$[\rho_i,\alpha_j]_\s=-\br{\rho_i,\alpha_j\ck}=-\delta_{ij}$ \newline
$[\rho_i,\rho_j]_\s=0$
\\\hline
$\epsilon_{ij}=[e_i,e_j]_\s$&entries in matrix \raisebox{0pt}[14pt][9pt]{$\begin{bsmallmatrix}\,B^T&\,I\\-I&0\end{bsmallmatrix}$} ($n\times n$ blocks)\\\hline
$(e^*_i:i\in I)$&$(\rho_i\ck,\ldots,\rho_n\ck,\alpha_1\ck,\ldots,\alpha_n\ck)$\quad basis for $P\ck\oplus Q\ck$\\\hline
$(f_i:i\in I)$&$(\rho_i,\ldots,\rho_n,\alpha_1,\ldots,\alpha_n)$\quad basis for $P\oplus Q$\\\hline
$p^*:N_\uf\to M^\circ$&$p^*(\alpha_i)=(\sum_{j=1}^nb_{ji}\rho_j,\,\alpha_i)$ 
\newline
$p^*(\beta)=(\phi\ck\ni Q\ck\mapsto\omega(\phi\ck,\beta),\beta)$ for $\beta\in Q$
\\\hline
$(v_i\in M^\circ:i\in I)$&$(p^*(\alpha_1),\ldots,p^*(\alpha_n),\rho_1,\ldots,\rho_n)$\\\hline
$(z_i:i\in I)$&$(x_1,\ldots,x_n,y_1,\ldots,y_n)$\quad indeterminates\\\hline
$z^{(\lambda,\beta)}$&$x^\lambda y^\beta=x_1^{a_1}\cdots x_n^{a_n}y_1^{c_1}\cdots y_n^{c_n}$ for $(\lambda,\beta)=(\sum_{i=1}^na_i\rho_i,\sum_{i=1}^nc_i\alpha_i)\in P\oplus Q$\\\hline
$(\zeta_i=z^{v_i}:i\in I)$&$(\hy_1,\ldots,\hy_n,x_1,\ldots,x_n)$ for $\hy_i=y_ix_1^{b_{1i}}\cdots x_n^{b_{ni}}$\\\hline
$\zeta^{(\beta,\lambda)}$&
$x^\lambda \hy^\beta=x_1^{a_1}\cdots x_n^{a_n}\hy_1^{c_1}\cdots\hy_n^{c_n}$ for $(\beta,\lambda)=(\sum_{i=1}^nc_i\alpha_i,\sum_{i=1}^na_i\rho_i)\in Q\oplus P$\\\hline
$\k$&a field of characteristic zero\\\hline
$\k[[\zeta]]$&$\k[[x,\hy]]=\k[[x_1,\ldots,x_n,\hy_1,\ldots,\hy_n]]$\\\hline
$\m$&ideal in $\k[[x,\hy]]$ consisting of series with constant term zero\\\hline
\end{longtable}

We work in the formal power series ring $\k[[x,\hy]]$ or, sometimes for convenience, in the quotient $\k[[x,\hy]]/\m^{k+1}$ for $k\ge 0$.
A \newword{wall} $(\d,f_\d)$ consists of a codimension-$1$ cone $\d$ in $V^*$ and a function $f_\d\in\k[[\hy]]$ (or in $\k[[\hy]]/\m^{k+1}$) such that:
\begin{enumerate}[(i)]
\item $\d$ is contained in $\beta^\perp$ for some primitive $\beta\in Q^+$ and defined by inequalities of the form $\br{p,\phi}\le0$ for $\phi\in Q$.
\item $f_\d=f_\d(\hy^\beta)$ is in the univariate power series ring $\k[[\hy^\beta]]$ for this primitive $\beta$ (or $f_\d\in\k[[\hy^\beta]]/\br{\hy^{(k+1)\beta}}$).
\end{enumerate}
A wall $(\d,f_\d)$ is \newword{incoming} if the vector $\omega(\,\cdot\,,\beta)\in V^*$ is in $\d$ and otherwise it is \newword{outgoing}.
Two walls are \newword{parallel} if they are contained in the same hyperplane.

A \newword{scattering diagram} is a collection $\D$ of walls such that the set $\D_k$ of walls $(\d,f_\d)\in\D$ with $f_\d\not\equiv 1$ modulo $\m^{k+1}$ is finite for all $k\ge 1$.
The \newword{support} $\Supp(\D)$ is the union of the walls of $\D$.

Given a scattering diagram $\D$, a \newword{generic} path for $\D$ is a piecewise differentiable path $\gamma:[0,1]\to V^*$ that:
\begin{itemize}
\item does not pass through the intersection of any two non-parallel walls of $\D$;
\item does not pass through the relative boundary of any wall;
\item has endpoints $\gamma(0)$ and $\gamma(1)$ contained in $V^*\setminus\Supp(\D)$; and
\item crosses walls only transversely.
\end{itemize}

Suppose $\gamma$ is a generic path for $\D$ and $(\d,f_\d)$ is a wall of $\D$ with $\gamma(t)\in\d$ for some $t\in(0,1)$.
The \newword{wall-crossing automorphism} $\p_{\gamma,\d,t}:\k[[x,\hy]]\to\k[[x,\hat y]]$ associated to this crossing is given by 
\begin{align}
\label{theta def x}
\p_{\gamma,\d,t}(x^\lambda)&=x^\lambda f_\d^{\br{\lambda,\pm\beta\ck}},\\
\label{theta def hat y}
\p_{\gamma,\d,t}(\hy^\phi)&=\hy^\phi f_\d^{\omega(\pm\beta\ck\!,\,\phi)},
\end{align}
where $\beta\ck$ is the normal vector to $\d$ that is contained in $Q^+$ and is primitive in $Q\ck$, taking $+\beta\ck$ if $\br{\gamma'(t),\beta}<0$ or $-\beta\ck$ if $\br{\gamma'(t),\beta}>0$.
(If $\gamma$ is not differentiable at $t$, the sign of $\br{\gamma'(t),\beta}$ still makes sense, recording the direction in which $\gamma$ crosses the wall.)
The explicit dependence on $t$ in the notation $\p_{\gamma,\d,t}$ is meant to emphasize that $\gamma$ might cross $\d$ multiple times, and in different directions, so that the sign chosen in \eqref{theta def x} and \eqref{theta def hat y} depends on more than just $\gamma$ and $\d$.

For each $k\ge 1$, define $\p_{\gamma,\D_k}$ to be $\p_{\gamma,\d_\ell,t_\ell}\circ\cdots\circ\p_{\gamma,\d_1,t_1}:\k[[x,\hy]]\to\k[[x,\hy]]$ such that $\d_1,\ldots,\d_\ell$ is the sequence of walls of $\D_k$ crossed by $\gamma$ with $\d_i$ crossed at time $t_i$ and $t_1\le t_2\le\cdots\le t_\ell$.
(There is a finite sequence of crossings because $\D_k$ is finite and because $\gamma$ is generic.)
Define the \newword{path-ordered product} $\p_{\gamma,\D}:\k[[x,\hy]]\to\k[[x,\hy]]$ to be $\lim_{k\to\infty}\p_{\gamma,\D_k}$.
We say $\D$ is \newword{consistent} if $\p_{\gamma,\D}$ depends only on $\gamma(0)$ and $\gamma(1)$.
By \cite[Proposition~2.4]{scatfan}, $\D$ is consistent if and only if each $\D_k$ is consistent modulo $\m^{k+1}$.
When $\D$ is consistent and $p,q\in V^*\setminus\Supp(\D)$, we define $\p_{p,q,\D}=\p_{\gamma,\D}$ for $\gamma$ a generic path from $p$ to $q$, which exists by \cite[Proposition~2.2]{scatfan}.

It is useful to reinterpret $\p_{\gamma,\D}$ as a map on Laurent monomials sending $x^\lambda y^\phi$ to $x^\lambda y^\phi f_\d^{\br{\lambda,\pm\beta\ck}}$ (with the choice of sign for $\pm\beta\ck$ as in the definition).
With that interpretation, the following is \cite[Proposition~2.5]{scatfan}, translated into our setup.
\begin{prop}\label{just x's}  
A path-ordered product $\p_{\gamma,\D}$ is determined entirely by its values $\p_{\gamma,\D}(x_1),\ldots,\p_{\gamma,\D}(x_n)$, or equivalently by its values $\p_{\gamma,\D}(x^\lambda)$ for $\lambda\in P$.
\end{prop}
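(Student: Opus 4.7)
The plan is to show that each wall-crossing automorphism $\p_{\gamma,\d,t}$, and hence each $\p_{\gamma,\D_k}$ and the limit $\p_{\gamma,\D}$, fixes every $y$-variable when viewed on the Laurent extension containing monomials $x^\lambda y^\phi$ for $\lambda\in P$. Once this is proved, $\p_{\gamma,\D}$ is a multiplicative map that fixes $y_1,\ldots,y_n$, so it is determined by its values on $x_1,\ldots,x_n$; multiplicativity then rephrases this in terms of $\p_{\gamma,\D}(x^\lambda)$ for all $\lambda\in P$. To recover the action on an arbitrary monomial $x^\lambda\hy^\phi$, one rewrites it as $x^{\lambda+\mu(\phi)}y^\phi$ for a suitable $\mu(\phi)\in P$ determined by $\phi$.

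The key computation is for a single wall crossing. Fix a wall $\d$ with primitive normal $\beta\ck=\sum_k n_k\alpha_k\ck$, and write $\phi=\sum_i c_i\alpha_i$. From the definition of $\hy_i$ in Table~\ref{scat root data} one has $\hy^\phi=y^\phi\cdot x^{\mu(\phi)}$ where $\mu(\phi)=\sum_{i,j} b_{ji}c_i\rho_j$. Combining \eqref{theta def x} and \eqref{theta def hat y} and solving for the action on $y^\phi$ gives
\[
\p_{\gamma,\d,t}(y^\phi)=y^\phi\cdot f_\d^{\,\omega(\pm\beta\ck,\phi)-\br{\mu(\phi),\pm\beta\ck}},
\]
and the exponent vanishes by the short check
\[
\br{\mu(\phi),\beta\ck}=\sum_{i,j}b_{ji}c_i n_j=\sum_{i,k}c_i n_k\,\omega(\alpha_k\ck,\alpha_i)=\omega(\beta\ck,\phi),
\]
using $\br{\rho_j,\alpha_k\ck}=\delta_{jk}$ and $\omega(\alpha_k\ck,\alpha_i)=b_{ki}$. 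Hence each wall crossing fixes $y^\phi$, the finite composition $\p_{\gamma,\D_k}$ fixes $y^\phi$ modulo $\m^{k+1}$, and the limit fixes $y^\phi$.

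The obstacle I anticipate is conceptual rather than technical: the symbol $y_i$ is not literally an element of $\k[[x,\hy]]$, since $y_i=\hy_i\prod_j x_j^{-b_{ji}}$ involves negative powers of $x$. One must first note that the wall-crossing rules \eqref{theta def x}--\eqref{theta def hat y} are multiplicative in their exponents and extend unambiguously to the Laurent ring in which $y_i$ lives, so that the statement ``$\p_{\gamma,\d,t}$ fixes $y^\phi$'' has literal content. Once this is in place, the argument is purely formal.
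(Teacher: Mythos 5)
Your proof is correct and takes essentially the same approach as the paper: the paper's entire argument is the remark immediately preceding the proposition --- that on Laurent monomials the wall-crossing sends $x^\lambda y^\phi$ to $x^\lambda y^\phi f_\d^{\br{\lambda,\pm\beta\ck}}$, i.e.\ fixes $y^\phi$ --- followed by a citation of \cite[Proposition~2.5]{scatfan}. Your computation with $\mu(\phi)$, showing $\br{\mu(\phi),\beta\ck}=\omega(\beta\ck,\phi)$, is precisely the verification of that remark, spelled out.
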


Two scattering diagrams $\D$ and $\D'$ are \newword{equivalent} if and only if $\p_{p,q,\D}=\p_{p,q,\D'}$ for all $p,q\in V^*\setminus(\Supp(\D)\cup\Supp(\D'))$.
A \newword{general} point is a point $p\in V^*$ contained in at most one hyperplane $\beta^\perp$ with $\beta\in Q^+$.
Given a scattering diagram $\D$ and a general point $p$, write $f_p(\D)=\prod_{\d\ni p}f_\d\in\k[[\hy]]$.
By \cite[Lemma~1.9]{GHKK}, two scattering diagrams $\D$ and $\D'$ are equivalent if and only $f_p(\D)$ and $f_p(\D')$ agree on all general points $p$.
A scattering diagram has \newword{minimal support} if no equivalent scattering diagram has strictly smaller support.
Every consistent scattering diagram is equivalent to a scattering diagram $\D$ with minimal support and such that each $\D_k$ has minimal support \cite[Proposition~2.8]{scatfan}.

Given a scattering diagram $\D$ and $\beta\in Q^+$, the \newword{rampart} of $\D$ associated to $\beta$ is the union of all supports of walls of $\D$ contained in $\beta^\perp$.  
For $p\in V^*$, write $\Ram_\D(p)$ for the set of ramparts $R$ of $\D$ such that $p\in R$, and write $\D\setminus\Ram_\D(p)$ for the set of walls of $\D$ not contained in any rampart in $\Ram_\D(p)$.

If $\D$ is consistent and has minimal support, we say $p,q\in V^*$ are $\D$-equivalent if and only if there is a path $\gamma$ from $p$ to $q$ on which $\Ram_\D(\,\cdot\,)$ is constant.
This happens if and only if $\Ram_\D(p)=\Ram_\D(q)$ and $p$ and $q$ are in the same path-connected component of $(\cap\Ram_\D(p))\setminus(\Supp(\D\setminus\Ram_\D(p)))$.
The closure of a $\D$-class is a \newword{$\D$-cone}.

A \newword{closed convex cone} in $V^*$ is a subset of $V^*$ that is closed in the usual sense, and also closed under addition and closed under nonnegative scaling.
A subset $F$ of a closed convex cone $C$ is a \newword{face} of $C$ if it is cone and has the property that if $x,y\in C$ and $F$ contains some point in the line segment $\seg{xy}$ besides $x$ and $y$, the whole segment $\seg{xy}$ is in $F$.
A \newword{fan} is a set $\F$ of closed convex cones such that if $C\in\F$ then every face of $C$ is in $\F$, and such that if $C,D\in\F$ then $C\cap D$ is a face of $C$ and a face of $D$.
We write $|\F|$ for the union of the cones in $\F$.
A fan $\F$ is \newword{complete} if $|\F|$ is the entire ambient vector space.

If $\D$ is consistent and has minimal support, then each $\D$-cone is a closed convex cone \cite[Proposition~3.5]{scatfan} and the collection $\Fan(\D)$ of all $\D$-cones and their faces is a complete fan in $V^*$ by \cite[Theorem~3.1]{scatfan}.

\subsection{Transposed cluster scattering diagrams}\label{trans clus sec}
An exchange matrix $B$ determines the \newword{transposed cluster scattering diagram with principal coefficients} $\Scat^T(B)$.
This is the unique (up to equivalence) consistent scattering diagram that is obtained by appending \emph{outgoing} walls to an initial scattering diagram $\set{(\alpha_i^\perp,1+\hy_i):i=1,\ldots,n}$.
In the more general (``non-transposed'') setup of \cite{scatfan} (as described in Table~\ref{scat root dict}), the scattering diagram $\Scat^T(B)$ is $\Scat(\tB,\s)$ for $\tB=[B^T\,\,I_n]$, where $I_n$ is an identity matrix, and $\s=(\alpha_1,\ldots,\alpha_n,\rho_1,\ldots,\rho_n)$.
We write $\Scat^T(B)$ rather than $\Scat(B^T)$ because we think of $B$ as our primary object, and we want to think of the global transpose not as a modification of $B$, but as a choice of conventions for creating a scattering diagram.
The fan $\Fan(\Scat^T(B))$ is denoted $\ScatFan^T(B)$ and called the \newword{transposed scattering fan}.

We pause here to quote a result that supports the use of root systems in our setup for scattering diagrams.

\begin{theorem}\label{root thm}
If $B$ is skew-symmetric and acyclic, then every wall of $\Scat^T(B)$ is normal to a root.
\end{theorem}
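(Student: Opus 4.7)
The plan is to invoke Bridgeland's identification of $\Scat^T(B)$ (in the skew-symmetric case) with the stability scattering diagram built from semistable representations of an associated quiver. Since $B$ is skew-symmetric and acyclic, we may take an acyclic quiver $Q$ whose adjacency matrix is (up to orientation) $B$. Translating through the transpose dictionary of Table~\ref{scat root dict}, each wall $(\d, f_\d)$ of $\Scat^T(B)$ sits in a hyperplane $\beta^\perp$ where the primitive $\beta \in Q^+$ is (proportional to) the dimension vector of some nonzero semistable representation of $Q$.

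With that identification in place, the theorem reduces to Kac's theorem: the dimension vectors of indecomposable representations of the acyclic quiver $Q$ are exactly the positive roots of the Kac--Moody root system $\Phi$ whose Cartan matrix is the $A$ associated to $B$. The factor $f_\d$ is a product of quantum dilogarithms indexed by indecomposable semistable representations with dimension vectors proportional to the primitive normal; for $f_\d \not\equiv 1$ there must be at least one such indecomposable contribution, and Kac's theorem identifies its dimension vector with a positive root. The primitive normal $\beta$ to $\d$ is the primitive rescaling of that dimension vector, hence is itself a root of $\Phi$.

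The main obstacle is bookkeeping of conventions: one must verify that the global transpose, the Langlands-dual choice, and the tall-versus-wide coefficient conventions used here transfer Bridgeland's statement (and the \cite{GHKK} packaging of it) faithfully into the setup of Table~\ref{scat root dict}, so that the semistability hyperplanes really are the $\beta^\perp$ for $\beta \in Q^+$ in the present notation. Once the dictionary is aligned, Kac's theorem is the only remaining ingredient and the conclusion is immediate. A conceivable alternative that avoids quiver representations would split walls into the ``real'' ones (obtained from finite mutation sequences, normal to $\c$-vectors, which in acyclic skew-symmetric type are known to be real roots) and those lying in the imaginary cone; but pinning down the latter as normal to imaginary roots still seems to require the representation-theoretic input, so it is cleaner to handle both cases uniformly via Bridgeland and Kac.
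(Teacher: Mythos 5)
Your proposal is correct and is essentially the paper's own route: the paper does not prove Theorem~\ref{root thm} independently but derives it from \cite[Proposition~10.1]{GHKK} (see also \cite[Example~10.4]{GHKK}) and from \cite[Lemma~11.4]{Bridgeland}, i.e.\ exactly the identification with the stability scattering diagram of an acyclic quiver followed by Kac's theorem on dimension vectors of indecomposables, as you describe. The one caution is that Bridgeland's lemma as invoked in the paper requires $B$ non-degenerate (which fails for every odd-rank skew-symmetric $B$), so for the full acyclic statement you should lean on \cite[Proposition~10.1]{GHKK} rather than on Bridgeland alone.
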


\textit{A priori}, every wall is normal to a positive vector in the root lattice, not necessarily a root.
Theorem~\ref{root thm} follows from \cite[Proposition~10.1]{GHKK}.
(See also \cite[Example~10.4]{GHKK}.) 
It also follows from \cite[Lemma~11.4]{Bridgeland} when $B$ is non-degenerate, relaxing the acyclicity requirement but requiring the existence of a genteel potential. 
We expect that the theorem extends to the case where $B$ is merely skew-symmetrizable.

The following useful fact about transposed cluster scattering diagrams with principal coefficients is proved by applying the antipodal map throughout and observing that the sign changes cancel when we check consistency.
\begin{prop}\label{scat antip}
For any exchange matrix $B$, 
\[\Scat^T(-B)=\set{(-\d,f_\d((\hy')^\beta)):\,(\d,f_\d(\hy^\beta)\in\Scat^T(B)},\]
where the $\hy_i$ are the monomials defined as in Table~\ref{scat root data} using $B$ while the $\hy'_i$ are defined using $-B$, and $f_\d((\hy')^\beta)$ is obtained from $f_\d(\hy^\beta)$ by replacing each $\hy_i$ by $\hy_i'$.
\end{prop}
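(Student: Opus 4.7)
Write $\D$ for the set on the right-hand side of the equation in the proposition. The plan is to verify that $\D$ satisfies the three conditions that characterize $\Scat^T(-B)$ up to equivalence: (i)~$\D$ is a consistent scattering diagram, (ii)~$\D$ contains the initial walls $(\alpha_i^\perp, 1+\hy_i')$ for $i=1,\ldots,n$, and (iii)~every other wall of $\D$ is outgoing with respect to $-B$. That $\D$ is a valid scattering diagram is immediate, since each $-\d$ is again a codimension-one cone in $\beta^\perp$ cut out by inequalities over $Q$, and the $\m^{k+1}$-finiteness transfers directly. Conditions (ii) and (iii) are then routine: $-\alpha_i^\perp = \alpha_i^\perp$, so the initial walls transport correctly; and a wall with primitive normal $\beta\ck$ is outgoing for $B$ precisely when $\omega(\cdot, \beta) \notin \d$, a condition preserved under simultaneously replacing $\d$ with $-\d$ and $\omega$ with $-\omega$.

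The substantial work is consistency. The wall-crossing automorphisms extend naturally to the Laurent ring $\k[x_1^{\pm 1},\ldots,x_n^{\pm 1}][[\hy]]$ and to its counterpart with $\hy'$ in place of $\hy$. Define a ring isomorphism $\mu$ between these Laurent rings by $\mu(x_i) = x_i^{-1}$ and $\mu(\hy_i) = \hy_i'$. The central claim is the intertwining relation
\[
  \mu \circ \p_{-\tilde\gamma,\, \d,\, t} \;=\; \tilde\p_{\tilde\gamma,\, -\d,\, t} \circ \mu,
\]
which should hold for every wall $(\d, f_\d(\hy^\beta))$ of $\Scat^T(B)$ and every generic path $\tilde\gamma$ in $\D$ crossing $-\d$ at time $t$; here $\tilde\p$ denotes the wall-crossing for the diagram $\D$ (which uses $-\omega$ in place of $\omega$ in \eqref{theta def hat y}) and $-\tilde\gamma$ is the antipodal path in $\Scat^T(B)$. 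This is checked by direct expansion on Laurent monomials $x^\lambda$ and $\hy^\phi$ using \eqref{theta def x}--\eqref{theta def hat y}.

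The main obstacle is the sign bookkeeping in this verification, where three sign changes interact. First, the antipodal map reverses the direction of the path, so the sign convention $\pm\beta\ck$ chosen for $\tilde\gamma$ at $-\d$ is opposite to that chosen for $-\tilde\gamma$ at $\d$. Second, $\mu$ sends $x^\lambda$ to $x^{-\lambda}$, contributing an additional sign flip in the exponent $\br{\lambda, \pm\beta\ck}$ of \eqref{theta def x}. Third, passing from $B$ to $-B$ negates $\omega$ in \eqref{theta def hat y}. The first and second flips cancel on the $x$-variable side, while the first and third cancel on the $\hy$-variable side; this is the cancellation alluded to in the sentence introducing the proposition.

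Composing the intertwining relation over all crossings of a generic loop $\tilde\gamma$ in $\D$ yields $\mu \circ \p_{-\tilde\gamma, \Scat^T(B)} = \tilde\p_{\tilde\gamma, \D} \circ \mu$. Since $-\tilde\gamma$ is a loop in $\Scat^T(B)$, consistency of $\Scat^T(B)$ gives $\p_{-\tilde\gamma, \Scat^T(B)} = \id$, so $\tilde\p_{\tilde\gamma, \D}$ fixes the image of $\mu$, and in particular fixes every $x^\lambda$ with $\lambda \in P$. Proposition~\ref{just x's} then gives $\tilde\p_{\tilde\gamma, \D} = \id$, establishing consistency of $\D$. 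The uniqueness of $\Scat^T(-B)$ up to equivalence completes the proof.
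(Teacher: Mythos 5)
Your proof is correct and takes essentially the same approach as the paper, which offers only the one-sentence sketch ``apply the antipodal map throughout and observe that the sign changes cancel when we check consistency.'' Your intertwining relation $\mu\circ\p_{-\tilde\gamma,\d,t}=\tilde\p_{\tilde\gamma,-\d,t}\circ\mu$ and the cancellation of the three sign flips (path direction, $x^\lambda\mapsto x^{-\lambda}$, and $\omega\mapsto-\omega$) is exactly the intended content of that sketch, worked out correctly and completed by the characterization of $\Scat^T(-B)$ via its initial walls and outgoing walls together with Proposition~\ref{just x's}.
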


We write $\A_\bullet(B)$ for the cluster algebra with principal coefficients associated to $B$, in the sense of \cite[Definition~3.1]{ca4}.
As discussed above, we have passed from \emph{wide} to \emph{tall} extended exchange matrices, and this is the reason for dealing with \emph{transposed} scattering diagrams.
Thus $\A_\bullet(B)$ is the cluster algebra associated (as in \cite[Definition~2.12]{ca4}) to the extended exchange matrix $\begin{bsmallmatrix}B\\I\end{bsmallmatrix}$, where $I$ is an identity matrix.

A \newword{cluster monomial} is a monomial in the cluster variables in some seed of $\A_\bullet(B)$.
(Conventions on cluster monomials differ, but we take monomials in the ``unfrozen'' cluster variables, not including the ``frozen''/tropical variables.)
We quote two constructions of cluster monomials for $\A_\bullet(B)$.
The first is in terms of broken lines.

Fix a point $p$ in the dominant chamber $D$ such that $p$ is not contained in any hyperplane $\beta^\perp$ for $\beta\in Q$.
We will define a \newword{theta function} $\thet_\lambda$ for every nonzero weight $\lambda\in P\setminus\set{0}$.
Our definition will not depend on the choice of $p$.
(This is not obvious, but rather follows from \cite[Theorem~3.5]{GHKK}, which is a special case of results of \cite[Section~4]{CPS}.)

Let $\gamma:(-\infty,0]\to V^*$ be a piecewise linear path with finitely many domains of linearity.
To each domain $L$ of linearity of $\gamma$, assign a monomial $c_Lx^{\lambda_L}y^{\beta_L}$ with $c_L\in\k$ and $(\lambda_L,\beta_L)\in P\oplus Q$.
Then $\gamma$ is a \newword{broken line} for $\lambda$ with endpoint $p$ if the path and the monomials satisfy the following conditions.  
\begin{enumerate}[(i)]
\item $\gamma(0)=p$.
\item \label{brok generic root}
$\gamma$ is disjoint from all relative boundaries of walls of $\Scat^T(B)$ and disjoint from all intersections of non-parallel walls of $\Scat^T(B)$. 
\item In each domain $L$ of linearity, $\gamma'$ is constantly equal to $-\lambda_L$.
\item If $L$ is the unbounded domain of linearity of $\gamma$, then $c_Lx^{\lambda_L}y^{\beta_L}=x^\lambda$.
\item \label{change slope}
At each point $t$ of nonlinearity, passing (as the parameter increases) from a domain $L$ of linearity to a domain $L'$ of linearity, by \eqref{brok generic root} there exists $\beta\ck$ primitive in $Q\ck$ such that all walls containing $\gamma(t)$ are in $(\beta\ck)^\perp$ and ${\br{\lambda_L,\beta\ck}>0}$.
If $f$ is the product of the $f_\d$ for all walls $(\d,f_\d)$ with $\gamma(t)\in\d$, then $c_{L'}x^{\lambda_{L'}}y^{\beta_{L'}}$ equals $c_Lx^{\lambda_L}y^{\beta_L}$ times a term in $f^{\br{\lambda_L,\beta\ck}}$.
\end{enumerate}
These conditions in particular allow us to recover the monomials from the path $\gamma$.

Writing $c_\gamma x^{\lambda_\gamma}y^{\beta_\gamma}$ for the monomial on the domain of linearity containing $0$, we define the theta function $\thet_\lambda$ to be the sum, over all broken lines for $\lambda$ with endpoint $p$, of the monomials $c_\gamma x^{\lambda_\gamma}y^{\beta_\gamma}$.
This is an element of $x^\lambda \k[[\hy]]$.
(We emphasize that the monomial on a domain $L$ of linearity is $c_Lx^{\lambda_L}y^{\beta_L}$, not $c_Lx^{\lambda_L}\hy^{\beta_L}$.
Thus when the monomial changes as described in \eqref{change slope}, both $\lambda_L$ and $\beta_L$ change.)

The subtleties inherent in computing theta functions are compounded by the appearance of both roots and co-roots in the definition.
We give some examples of computing theta functions in rank $2$ in Section~\ref{rk2 theta sec}.

The dominant chamber $D$ is a cone in $\ScatFan^T(B)$.  
Write $\ChamberFan^T(B)$ for the subfan of $\ScatFan^T(B)$ consisting of $D$, all maximal cones $D'$ adjacent to $D$, all maximal cones adjacent to such $D'$, etc., together with all faces of these cones.
The notation $\ChamberFan^T(B)$ will be short-lived in this paper, as almost immediately it will be replaced by a more enlightening notation.
The following is immediate from \cite[Theorem~5.2]{scatfan} (a version of \cite[Theorem~4.9]{GHKK}), from \cite[Corollary~5.9]{GHKK}, and from the definition of $\g$-vectors in \cite[Section~6]{ca4}.

\begin{theorem}\label{clus mon thm}
The map $\lambda\mapsto\thet_\lambda$ is a bijection from $P\cap|\ChamberFan^T(B)|$ to the set of cluster monomials in $\A_\bullet(B)$.
If $\lambda\in P\cap|\ChamberFan^T(B)|$, then $\lambda$ is the $\g$-vector of the cluster monomial $\thet_\lambda$.
There is a bijection from rays of $\ChamberFan^T(B)$ to cluster variables in $\A_\bullet(B)$ sending each ray to $\thet_\lambda$, where $\lambda$ is the shortest vector in $P$ contained in the ray.
\end{theorem}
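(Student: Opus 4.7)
The plan is to deduce the theorem by translating the three cited external results into the transposed, principal-coefficients root-system setup. The real content already lives in those references; the work here is dictionary-checking, not independent proof.

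First, I would verify that $\Scat^T(B)$, as defined in Section~\ref{trans clus sec}, coincides with $\Scat(\tB,\s)$ for $\tB=[B^T\,I_n]$ and $\s=(\alpha_1,\ldots,\alpha_n,\rho_1,\ldots,\rho_n)$ in the general setup of \cite{GHKK,scatfan}. This is the content already outlined by Table~\ref{scat root dict}: one checks that the skew form $\{\,\cdot\,,\,\cdot\,\}$, the map $p^*$, the choice of basis $\s$, and the initial walls $\{(\alpha_i^\perp,1+\hy_i):i=1,\ldots,n\}$ all match the initial data for $\Scat(\tB,\s)$. With this identification in hand, \cite[Theorem~5.2]{scatfan} applies verbatim, giving a bijection $\lambda\mapsto\thet_\lambda$ from $P\cap|\ChamberFan^T(B)|$ to cluster monomials of the cluster algebra built from the extended exchange matrix $\tB$. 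By \cite[Definition~3.1]{ca4}, together with the wide-versus-tall discussion in the Introduction, this cluster algebra is precisely $\A_\bullet(B)$, proving the first assertion.

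For the $\g$-vector claim, I would appeal to \cite[Corollary~5.9]{GHKK}. For an endpoint $p$ in the relative interior of the cone of $\ChamberFan^T(B)$ containing $\lambda$, the straight-line broken line contributes the distinguished monomial $x^\lambda$ (coefficient $1$, trivial $y$-content), while every other broken line contributes a monomial of the form $c\,x^{\lambda'}y^\beta$ with $\beta\in Q^+$ strictly positive. This is exactly the separation-of-additions condition characterizing the $\g$-vector in the principal-coefficients sense of \cite[Section~6]{ca4}, so $\lambda$ is the $\g$-vector of $\thet_\lambda$. The third assertion follows immediately from the first two: cluster variables are the cluster monomials that are themselves variables rather than higher powers, which in the principal-coefficients setting are characterized by their $\g$-vectors being primitive generators of rays of the chamber fan, i.e., the shortest vectors of $P$ on the rays of $\ChamberFan^T(B)$.

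The main (and essentially only) obstacle is the bookkeeping through two layers of conventions: the passage from wide to tall extended exchange matrices, which forces the global transpose $B\leadsto B^T$ and thus the distinction $\Scat^T(B)=\Scat(\tB,\s)$; and the translation between the lattice-and-pairing language of \cite{GHKK} and the root/weight language of Tables~\ref{scat root data} and \ref{scat root dict}. Once those tables are trusted, no further argument is needed beyond quoting the three external results.
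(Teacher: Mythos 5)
Your proposal is correct and follows exactly the same route as the paper, which likewise derives the theorem by citing \cite[Theorem~5.2]{scatfan}, \cite[Corollary~5.9]{GHKK}, and the definition of $\g$-vectors in \cite[Section~6]{ca4}, with the only work being the translation through Tables~\ref{scat root data} and~\ref{scat root dict} into the transposed, principal-coefficients setting. Your added remarks on the straight-line broken line and the primitivity of ray generators are consistent elaborations of what the paper leaves implicit.
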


The $\g$-vector is defined to be an integer vector, but we interpret elements of~$P$ as $\g$-vectors by taking fundamental-weight coordinates.
Define $\gFan(B)$ to be the set of all cones $C$ such that $C$ is the nonnegative linear span of the $\g$-vectors of a subset of a cluster of $\A_\bullet(B)$.
The following dual version of \cite[Theorem~0.8]{GHKK} is an immediate corollary of Theorem~\ref{clus mon thm}.

\begin{corollary}\label{cham g}
The set $\gFan(B)$ is a fan and coincides with $\ChamberFan^T(B)$.
\end{corollary}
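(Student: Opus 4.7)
The plan is to deduce the corollary directly from Theorem~\ref{clus mon thm}. Since $\ChamberFan^T(B)$ is defined as a subfan of $\ScatFan^T(B)$ and is therefore already a fan, it suffices to establish the set equality $\gFan(B)=\ChamberFan^T(B)$; the fan structure on $\gFan(B)$ then follows automatically.

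My first step would be to read Theorem~\ref{clus mon thm}(1)--(2) as saying that the $\g$-vectors of all cluster monomials of $\A_\bullet(B)$ are precisely the weight-lattice points in $|\ChamberFan^T(B)|$, while Theorem~\ref{clus mon thm}(3) identifies rays of $\ChamberFan^T(B)$ with cluster variables. For the inclusion $\gFan(B)\subseteq\ChamberFan^T(B)$, I would fix a cluster $\{x_1^*,\ldots,x_n^*\}$ with $\g$-vectors $g_1,\ldots,g_n$. Because $\g$-vectors add under multiplication within a single cluster, the cluster monomials $\prod_i(x_i^*)^{a_i}$ carry $\g$-vectors running over the sublattice $\sum_i\ZZ_{\ge 0}g_i$, which therefore sits inside $P\cap|\ChamberFan^T(B)|$. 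Closedness of the support and density of lattice points in the simplicial cone $C:=\sum_i\RR_{\ge 0}g_i$ upgrade this to $C\subseteq|\ChamberFan^T(B)|$. To promote ``contained in the support'' to ``equals a cone of the fan,'' I would examine $\relint(C)$: its lattice points correspond under $\lambda\mapsto\thet_\lambda$ to cluster monomials supported on every $x_i^*$ with positive exponent, and the injectivity half of Theorem~\ref{clus mon thm}(1) then forbids these from coming from any other cluster. Combined with the connectedness and openness of $\relint(C)$, this forces $\relint(C)$ into a single maximal cone $C'$ of $\ChamberFan^T(B)$; since the extreme rays of $C$ are already rays of the ambient fan by Theorem~\ref{clus mon thm}(3), one concludes $C=C'$.

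For the reverse inclusion I would induct on distance in the exchange graph of $\A_\bullet(B)$, or equivalently on the adjacency structure used to define $\ChamberFan^T(B)$. The base case is the dominant chamber $D$, which is spanned by $\rho_1,\ldots,\rho_n$, the $\g$-vectors of the initial cluster $\{x_1,\ldots,x_n\}$. For the induction step, passing from one maximal cone of $\ChamberFan^T(B)$ to an adjacent one across a wall corresponds, via Theorem~\ref{clus mon thm}(3), to replacing the cluster variable sitting on the shared-facet ray complement by a new one; this is exactly a cluster mutation, so the rays of each maximal cone are the $\g$-vectors of a cluster, which places that cone in $\gFan(B)$ along with all of its faces.

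The main obstacle I anticipate is justifying, in the forward direction, that the $n$ extreme rays of $C$ account for \emph{all} extreme rays of $C'$: a priori $C'$ might carry additional rays not among $g_1,\ldots,g_n$. Running the mutation induction of the reverse inclusion first identifies each maximal cone of $\ChamberFan^T(B)$ with a specific cluster and confirms that its rays are exactly the $\g$-vectors of that cluster; pairing the two inclusions in this order resolves the subtlety cleanly.
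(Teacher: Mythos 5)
Your route is the same as the paper's: the paper offers no argument beyond declaring the corollary ``an immediate corollary of Theorem~\ref{clus mon thm}'' (it is presented as a dual version of \cite[Theorem~0.8]{GHKK}), so your proposal is an attempt to unpack exactly the deduction the paper leaves implicit. That said, two steps in your elaboration are weaker than they look. The smaller one: ``closedness of the support'' is false in infinite type --- $|\ChamberFan^T(B)|$ need not be closed (in rank-$2$ affine type the limiting ray lies in the closure of the support but not in the support) --- so you cannot pass from density of lattice points to $C\subseteq|\ChamberFan^T(B)|$ by a closure argument on the whole support. This is fixable in the way you already gesture at: once the rational points of $\relint(C)$ are pinned inside a single maximal cone $C'$, you get $C\subseteq C'$ because $C'$ itself is a closed cone; no global closedness is needed.

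The more serious issue is the induction step of your reverse inclusion: ``passing to an adjacent cone replaces one cluster variable by another, and this is exactly a cluster mutation'' is asserted rather than derived. Theorem~\ref{clus mon thm} as literally stated gives a bijection between rays and cluster variables and between lattice points of the support and cluster monomials, but it does not by itself say that the $n$ cluster variables attached to the rays of a maximal cone of $\ChamberFan^T(B)$ form a cluster, nor that adjacency of maximal cones corresponds to mutation of clusters. That is precisely the content of \cite[Theorem~0.8]{GHKK} (equivalently, of the fuller statements \cite[Theorem~4.9]{GHKK} and \cite[Theorem~5.2]{scatfan} from which Theorem~\ref{clus mon thm} is extracted), and your forward inclusion leans on the same step to identify the extreme rays of $C'$. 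So the argument as written is circular exactly where the real content lies. The paper gets away with ``immediate'' because it is implicitly importing those stronger results; a genuinely self-contained derivation from the bare statement of Theorem~\ref{clus mon thm} would need the additional input that the theta functions on the rays of each maximal cone constitute a cluster.
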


Accordingly, we call $\gFan(B)$ the \newword{$\g$-vector fan} of $B$, and we will refer to $\gFan(B)$ rather than $\ChamberFan^T(B)$ through the rest of the paper.

\begin{remark}\label{Clear fr}
In \cite[Theorem~5.2]{scatfan}, there is an operator $\Clear_\fr$ that does not appear in Theorem~\ref{clus mon thm} because the latter concerns principal coefficients.
In the language of \cite{scatfan}, this is because each term of each $\thet_\lambda$ contains only positive powers of the $\hy_i$, each of which contains only positive powers of the frozen variables~$y_j$.
\end{remark}

The following is \cite[Theorem~4.6]{scatfan} in our transposed principal-coefficients setting.
\begin{theorem}\label{clus easy root}
If $F$ and $G$ are adjacent maximal cones of $\gFan(B)$, then the function $f_p(\Scat^T(B))$ is $1+\hy^{\beta}$ for every general point $p$ in $F\cap G$, where $\beta$ is the primitive normal to $F\cap G$ in $Q^+$.
\end{theorem}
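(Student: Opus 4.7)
The plan is to interpret the theorem through cluster combinatorics, using the identification of the $\g$-vector fan with the chamber fan of $\Scat^T(B)$ provided by Corollary~\ref{cham g} and Theorem~\ref{clus mon thm}. By Corollary~\ref{cham g}, the adjacent maximal cones $F$ and $G$ correspond to two seeds of $\A_\bullet(B)$ that differ by a single mutation, say at index $k$; their shared facet $F\cap G$ is spanned by the $\g$-vectors of the $n-1$ cluster variables common to both seeds. The two differing cluster variables $x_k^F, x_k^G$ have $\g$-vectors on the rays of $F$ and $G$ respectively that are not in $F\cap G$, and by Theorem~\ref{clus mon thm} these are realized as theta functions $\thet_{\lambda_F}$ and $\thet_{\lambda_G}$.

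The first key step is to match the primitive normal $\beta\in Q^+$ to $F\cap G$ with the $\hy$-monomial appearing in the exchange relation. In the principal-coefficients setup, the mutation between the two seeds gives an exchange relation that, after clearing a Laurent monomial in the common cluster variables and the $y_j$, reduces to the binomial factor $1+\hy^\beta$, where $\beta\in Q^+$ is primitive and records the $\c$-vector datum of the mutation with respect to the initial seed. The standard cluster-algebraic identity relating $\g$-vectors of $x_k^F$ and $x_k^G$ (their difference being a positive multiple of this $\beta$, expressed in fundamental-weight coordinates) then implies that this $\beta$ is exactly the primitive normal to $F\cap G$ in $Q^+$.

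The second key step is to match this binomial to the path-ordered product across $F\cap G$. Pick $p_F$ near $p$ in the interior of $F$ and $p_G$ near $p$ in the interior of $G$, connected by a short generic path $\gamma$ that crosses only walls contained in $\beta^\perp$ at (or very near) $p$. By \eqref{theta def x}, the resulting wall-crossing sends $x^\lambda$ to $x^\lambda f_p(\Scat^T(B))^{\br{\lambda,\pm\beta\ck}}$. On the other hand, for any $\lambda \in (F\cap G)\cap P$, the theta function $\thet_\lambda$ is the same cluster monomial in both seeds, and transport of $x^\lambda$ from cluster $F$ to cluster $G$ via broken lines (as in the definition of $\thet_\lambda$) is forced by the exchange relation to introduce exactly the factor $(1+\hy^\beta)^{\br{\lambda,\pm\beta\ck}}$. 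Choosing $\lambda\in P$ with $\br{\lambda,\beta\ck}=1$ pins down $f_p(\Scat^T(B)) = 1+\hy^\beta$. Proposition~\ref{just x's} then confirms that this determination on monomials $x^\lambda$ captures the full wall-crossing automorphism, so consistency of $\Scat^T(B)$ forces the product of wall functions at $p$ to equal exactly $1+\hy^\beta$.

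The main obstacle is the bookkeeping in the first step: verifying that the binomial arising from mutation between two arbitrary adjacent seeds — not necessarily the initial one — is literally $1+\hy^\beta$ for the same $\beta$ (with $\hy^\beta$ defined via the \emph{initial} data of Table~\ref{scat root data}) that is normal to $F\cap G$. This hinges on the compatibility between $\hy$-mutation, $\c$-vector mutation, and the tall-extended principal-coefficients convention set up in Section~\ref{transp sec}, and it is precisely the feature of principal coefficients that keeps the wall function as a simple binomial rather than a higher-degree polynomial.
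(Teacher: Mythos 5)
First, a point of reference: the paper does not prove this statement. It is quoted as \cite[Theorem~4.6]{scatfan}, translated into the transposed principal-coefficients setting; the proof there runs through the mutation invariance of cluster scattering diagrams (\cite[Theorem~1.24]{GHKK}), transporting the initial walls $(\alpha_i^\perp,1+\hy_i)$ to every facet of the chamber fan by mutation maps, which act on wall functions by monomial substitutions and therefore preserve the binomial form. Your route through exchange relations is genuinely different, and could in principle work, but as written it has two concrete gaps.

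The first gap is the justification that $\beta$ is the primitive normal to $F\cap G$. You assert that the difference of the $\g$-vectors of the two exchanged cluster variables is a positive multiple of $\beta$ ``expressed in fundamental-weight coordinates.'' This is false in general: for $B=\begin{bsmallmatrix}0&1\\-2&0\end{bsmallmatrix}$ the facet between $C_1$ and $C_2$ is the ray of $\rho_2$ with primitive normal $\alpha_1$, while the exchanged $\g$-vectors are $\rho_1$ and $-\rho_1+2\rho_2$, whose difference is $2\rho_1-2\rho_2$ --- not proportional to $\alpha_1$ under any coordinate identification. The correct statement is the Nakanishi--Zelevinsky tropical duality: the $\c$-vectors of a seed are (up to the symmetrizers $\delta_i$) the dual basis to its $\g$-vectors, which is exactly what makes $\c_k$ orthogonal to the span of the other $n-1$ $\g$-vectors. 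Also note that $\beta\in Q\subset V$ while $\g$-vectors lie in $P\subset V^*$, so ``difference of $\g$-vectors $=$ multiple of $\beta$'' is not even a well-typed assertion without the duality.

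The second gap is in the wall-crossing comparison, which is internally inconsistent and, at its crux, assumes what is to be proved. For $\lambda\in(F\cap G)\cap P$ one has $\br{\lambda,\beta\ck}=0$, so the crossing acts trivially on $x^\lambda$ and yields no information about $f_p$; the $\lambda$ with $\br{\lambda,\beta\ck}=1$ that you then invoke cannot lie in $F\cap G$, and for such $\lambda$ the claim that transport across the wall ``is forced by the exchange relation to introduce exactly the factor $(1+\hy^\beta)^{\br{\lambda,\pm\beta\ck}}$'' is precisely the content of the theorem, not something you have derived. To close this, you must take $\lambda=\g_k^G$ (or $\g_k^F$), verify $\br{\g_k^G,\beta\ck}=\pm1$ (another consequence of the tropical duality), factor $\p_{q_G,p_0}=\p_{q_F,p_0}\circ\p_{q_G,q_F}$, expand $x^{\g_k^G}$ via the $\g$-vector mutation rule, and compare the result with the principal-coefficients exchange relation $x_k^Fx_k^G=M(1+\hy^{\c_k})$. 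That comparison requires Theorem~\ref{clus mon pop root}, which in \cite{scatfan} is established downstream of (or alongside) the very statement you are proving, so you would also need to argue that no circularity is introduced.
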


Our second construction of cluster monomials is in terms of path-ordered products.
The following is a rephrasing of \cite[Theorem~5.6]{scatfan}.

\begin{theorem}\label{clus mon pop root}
If $\lambda\in P$ is contained in a cone of $\gFan(B)$, then the cluster monomial $\thet_\lambda$ with $\g$-vector $\lambda$ is $\p_{q,p,\Scat^T(B)}(x^\lambda)$ for any point $p$ in the interior of the dominant chamber $D$ and any point $q$ in the interior of a maximal cone $C$ of $\gFan(B)$ such that $\lambda\in C$.
\end{theorem}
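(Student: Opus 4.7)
The plan is to derive Theorem~\ref{clus mon pop root} by combining Theorem~\ref{clus mon thm}, which identifies $\thet_\lambda$ as the cluster monomial with $\g$-vector $\lambda$, with the identity $\thet_\lambda = \p_{q,p,\Scat^T(B)}(x^\lambda)$, which is a rephrasing of \cite[Theorem~5.6]{scatfan}. Since the hypothesis $\lambda\in P\cap|\gFan(B)|$ supplies the cluster monomial at once via Theorem~\ref{clus mon thm}, the work lies in translating the broken-line/path-ordered-product equality into our transposed, principal-coefficients conventions.

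For the main identification, I would compare the definition of $\thet_\lambda$ as a sum, over broken lines for $\lambda$ ending at $p$, of the associated monomials $c_\gamma x^{\lambda_\gamma}y^{\beta_\gamma}$, with the value of $\p_{q,p,\Scat^T(B)}(x^\lambda)$ along a generic path from $q$ to $p$. A broken line for $\lambda$ starts at infinity with initial monomial $x^\lambda$ and velocity $-\lambda$. Because $\lambda$ lies in $C$ and $C$ is a maximal cone of $\gFan(B)=\ChamberFan^T(B)$, the unbounded initial ray of the broken line remains in $C$ and hence crosses no wall of $\Scat^T(B)$ until it exits $C$. One may therefore truncate each broken line to begin at $q\in\mathrm{int}(C)$ without altering its contributed monomial. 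Expanding the wall-crossing automorphisms of $\p_{q,p}(x^\lambda)$ term by term produces a sum indexed by choices of one term from $f_\d^{\br{\lambda_L,\pm\beta\ck}}$ at each wall $\d$ crossed; since $\gamma'(t)=-\lambda_L$ on a broken line, the sign convention of \eqref{theta def x} ($\br{\gamma'(t),\beta}<0$ forces $+\beta\ck$) matches that of condition \eqref{change slope} ($\br{\lambda_L,\beta\ck}>0$), so each sequence of term-choices corresponds to a broken line and the two sums coincide.

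The main obstacle is that broken lines bend and so may cross a different set of walls than the straight line from $q$ to $p$; the correspondence is therefore not a literal path-deformation but rather a bookkeeping of terms in the iterated expansion of wall-crossing automorphisms, whose well-definedness depends on consistency of $\Scat^T(B)$ (and, via Proposition~\ref{just x's}, reduces to tracking images of monomials $x^\lambda$). This bookkeeping is carried out already in the proof of \cite[Theorem~5.6]{scatfan} in the more general setup; the translation to the present conventions amounts to substituting the data of Table~\ref{scat root dict}---in particular replacing the matrix of $\epsilon_{ij}$ by $\begin{bsmallmatrix}B^T & I\\-I & 0\end{bsmallmatrix}$---and using \eqref{theta def x}--\eqref{theta def hat y} as the wall-crossing automorphism. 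No new ideas are required beyond verifying this dictionary.
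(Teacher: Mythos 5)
Your proposal is correct and matches the paper's treatment: the paper gives no independent proof of this theorem, stating only that it is ``a rephrasing of [Theorem~5.6]{scatfan}'' into the transposed, principal-coefficients conventions of Table~\ref{scat root dict}, which is exactly the reduction you carry out (your additional broken-line heuristic is a reasonable gloss on what that cited theorem establishes, and you correctly defer the substantive bookkeeping to its proof).
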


The function $\p_{q,p,\Scat^T(B)}(x^\lambda)$ is $F_\lambda\cdot x^\lambda$ for some $F_\lambda\in\k[[\hy]]$.
By \cite[Corollary~6.3]{ca4}, the cluster variable with $\g$-vector $\lambda$ is $x^\lambda$ times a polynomial in the $\hy$ called the \newword{$F$-polynomial} of the cluster variable.
(This works because in the principal coefficients case, the denominator in \cite[(6.5)]{ca4} is $1$.)
Combining Theorem~\ref{clus mon thm} and Theorem~\ref{clus mon pop root}, we have the following immediate corollary.

\begin{cor}\label{clus mon F}
If $\lambda\in P$ is the $\g$-vector of a cluster variable, the corresponding $F$-polynomial is $x^{-\lambda}\cdot\p_{q,p,\Scat^T(B)}(x^\lambda)$ for any point $p$ in the interior of the dominant chamber $D$ and any point $q$ in the interior of a maximal cone $C$ of $\gFan(B)$ such that $\lambda\in C$.
\end{cor}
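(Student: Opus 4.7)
The plan is to derive the statement by direct composition of Theorem~\ref{clus mon thm}, Theorem~\ref{clus mon pop root}, and the factorization of cluster variables in principal coefficients from \cite[Corollary~6.3]{ca4} that was recalled in the paragraph just before the statement. There is nothing nontrivial to prove; the only substantive content is checking that conventions (the global transpose and the choice to work with principal coefficients) line up.

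First, I would apply Theorem~\ref{clus mon pop root} to write $\p_{q,p,\Scat^T(B)}(x^\lambda) = \thet_\lambda$, which is valid because $\lambda$ is assumed to lie in a maximal cone $C$ of $\gFan(B)$ and $p$, $q$ are taken in the interiors of $D$ and $C$ as required. Next, since $\lambda$ is the $\g$-vector of a cluster variable, Theorem~\ref{clus mon thm} identifies $\thet_\lambda$ with that cluster variable. Finally, as noted in the excerpt, principal coefficients together with \cite[Corollary~6.3]{ca4} force the cluster variable with $\g$-vector $\lambda$ to factor as $x^\lambda F_\lambda(\hy)$, where $F_\lambda \in \k[\hy]$ is the $F$-polynomial; the denominator in \cite[(6.5)]{ca4} is $1$ precisely because we are in the principal-coefficients setting. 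Multiplying both sides of the resulting identity $\p_{q,p,\Scat^T(B)}(x^\lambda) = x^\lambda F_\lambda$ by $x^{-\lambda}$ yields the claimed formula.

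The only potential obstacle is bookkeeping: one must confirm that the $\g$-vector as read off from $\lambda\in P$ (via fundamental-weight coordinates, per the remark after Theorem~\ref{clus mon thm}) agrees with the $\g$-vector in the sense of \cite[Section~6]{ca4} used in \cite[Corollary~6.3]{ca4}, and that the transposition convention does not introduce a sign or swap. Both of these are precisely the normalizations adopted at the start of Section~\ref{transp sec}, where the transposed setup was chosen so that $\thet_{m_0}$ has $\g$-vector $m_0$ in the tall-extended-exchange-matrix sense of \cite{ca4}. With that compatibility in hand, the corollary follows by a single line of algebra, and I would simply write out the chain of equalities above.
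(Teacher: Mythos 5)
Your proposal is correct and matches the paper's argument exactly: the corollary is obtained by chaining Theorem~\ref{clus mon pop root} (path-ordered product equals $\thet_\lambda$), Theorem~\ref{clus mon thm} ($\thet_\lambda$ is the cluster variable with $\g$-vector $\lambda$), and the factorization $x^\lambda F_\lambda$ from \cite[Corollary~6.3]{ca4}, which the paper records in the paragraph immediately preceding the statement. Your attention to the convention-matching (fundamental-weight coordinates and the global transpose) is exactly the point the paper's setup in Section~\ref{transp sec} is designed to handle.
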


We conclude this section by pointing out how the results of \cite{GHKK}, as rephrased in \cite{scatfan}, prove \cite[Conjecture~7.12]{ca4}, one of the major conjectures of \cite{ca4}.
We write $\mu_k$ for matrix mutation in direction $k$ and $\eta_k^B:V^*\to V^*$ for the mutation map as defined, for example, in \cite[Section~4.1]{scatfan}.
In light of Corollary~\ref{cham g}, the following result is a consequence of \cite[Corollary~4.5]{scatfan}, which in turn is a consequence of \cite[Theorem~1.24]{GHKK}.
\begin{theorem}\label{7.12}
For any exchange matrix~$B$ and any $k\in\set{1,\ldots,n}$, the mutation map $\eta^{B^T}_k$ is a piecewise-linear isomorphism from $\gFan(B)$ to $\gFan(\mu_k(B))$.
\end{theorem}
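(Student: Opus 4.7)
The plan is to combine Corollary~\ref{cham g} with \cite[Corollary~4.5]{scatfan}, with the main work being to keep the two separate transpose conventions straight.

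First, I would unwind the identifications. By Corollary~\ref{cham g}, $\gFan(B)$ coincides with $\ChamberFan^T(B)$, the subfan of $\ScatFan^T(B)$ consisting of the dominant chamber $D$ together with every maximal cone reachable from $D$ by a chain of adjacent maximal cones, and all of their faces; the same holds with $B$ replaced by $\mu_k(B)$. By the definition of $\Scat^T$ given in Section~\ref{trans clus sec}, $\ScatFan^T(B)=\ScatFan(B^T)$. Since matrix mutation commutes with transposition, $\mu_k(B^T)=\mu_k(B)^T$, so $\ScatFan^T(\mu_k(B))=\ScatFan(\mu_k(B^T))$.

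Second, I would apply \cite[Corollary~4.5]{scatfan} with the exchange matrix $B^T$. This produces a piecewise-linear isomorphism
\[
\eta^{B^T}_k:\ScatFan(B^T)\longrightarrow\ScatFan(\mu_k(B^T)),
\]
which, via the identifications above, is exactly a piecewise-linear isomorphism from $\ScatFan^T(B)$ to $\ScatFan^T(\mu_k(B))$.

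Third, I would restrict this isomorphism to the $\g$-vector subfans. Any piecewise-linear isomorphism of complete fans sends adjacent maximal cones to adjacent maximal cones, and hence maps each chamber-adjacency connected component to a chamber-adjacency connected component. It therefore suffices to show that $\eta^{B^T}_k$ maps the dominant chamber $D$ of $\ScatFan^T(B)$ into the connected component of the dominant chamber of $\ScatFan^T(\mu_k(B))$. This follows from the explicit piecewise-linear formula for $\eta^{B^T}_k$ given in~\cite[Section~4.1]{scatfan}: on each of the two half-spaces it acts linearly, preserves $\br{\,\cdot\,,\alpha_i}\ge 0$ for $i\ne k$, and carries $D$ to a maximal cone sharing the wall $\alpha_k^\perp$ with the image dominant chamber.

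The only real obstacle is bookkeeping: reconciling the transpose hidden inside $\Scat^T$ with the transpose in the superscript of $\eta^{B^T}_k$ and with matrix mutation itself, so that the piecewise-linear isomorphism provided by \cite[Corollary~4.5]{scatfan} can be read as a map between the two fans named in the theorem. Once that is spelled out, the statement is essentially a direct transcription of the two cited results together with Corollary~\ref{cham g}.
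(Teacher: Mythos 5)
Your proposal is correct and follows essentially the same route as the paper, which simply observes that the theorem is a consequence of \cite[Corollary~4.5]{scatfan} in light of Corollary~\ref{cham g}; you have merely spelled out the transpose bookkeeping and the restriction to the chamber-connected component of $D$ that the paper leaves implicit.
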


Each cluster monomial in $\A_\bullet(B)$ is also a cluster monomial in $\A_\bullet(\mu_k(B))$.
The conjecture \cite[Conjecture~7.12]{ca4} states that the $\g$-vector of the cluster monomial with respect to $\mu_k(B)$ is obtained from its $\g$-vector with respect to $B$ by a particular piecewise-linear map.
In \cite[Section~8]{universal}, it was pointed out that this map is the mutation map $\eta_k^{B^T}$.
In light of Theorem~\ref{clus mon thm} and the fact that $\eta^{B^T}_k$ is an automorphism of $P$, Theorem~\ref{7.12} is \cite[Conjecture~7.12]{ca4}.

%
%

\section{Scattering diagrams of rank $2$}\label{rk2 sec}

Rank-$2$ scattering diagrams of finite type are easy to understand and are treated as part of the finite-type discussion in Section~\ref{camb sec}.
In contrast, rank-$2$ scattering diagrams of non-finite type are complicated.  
In most cases, there is a region where the walls of the diagram are not well understood.
Combining \cite[Theorem~1.1]{Bridgeland} and \cite[Theorem~1.5]{Bridgeland} yields a formula that in principle describes these walls when $B$ is skew-symmetric, but the details are quite complicated.
In \cite[Example~1.15]{GHKK}, the expectation is expressed that every rational ray in this region is a wall with a nontrivial function attached. 
If this is true, then every ray (rational or not) in the region is a cone in the scattering fan.
When the Cartan matrix associated to $B$ is of affine type, this mysterious region collapses to a single limiting wall.

We begin this section with some generalities on infinite rank-2 type.
Although we ultimately handle only the affine cases, we start in the general case, to give a unified framework for the affine cases and to highlight the difficulties encountered in other cases.
We use Theorem~\ref{clus mon pop root} and a computation of certain limits of ratios of cluster variables to find the function attached to the limiting wall in the affine case, without any need for representation-theoretic or algebraic-geometric machinery.
Computations related to our limit computations are found in \cite{CaSch,Zhang}.
See Remarks~\ref{UMN REU} and~\ref{Ralf and Ilke}.
The skew-symmetric case of our result recovers the affine case of a general formula conjectured in \cite[Section~1.4]{KS08} and proved in \cite[Section~6]{Reineke2}.
(See also \cite[Example~1.15]{GHKK}.)
In the non-skew-symmetric affine case, our result may be new.

\subsection{Rank-$2$ infinite type}  \label{rk2 inf sec}
We now consider the transposed cluster scattering diagram with principal coefficients for the exchange matrix $B=\begin{bsmallmatrix}0&b\\a&0\end{bsmallmatrix}$ with $ab\le-4$.
Up to the symmetry of swapping the indices $1$ and $2$, we may as well assume that $a<0$, so that $b>0$.
Continuing the notation above, we have $n=2$ and we have initial cluster variables $x_1$ and $x_2$.
For each $i\in\integers$, we define $x_{i+1}$ to be the cluster variable in $\A_\bullet(B)$ obtained by exchanging $x_{i-1}$ out of the cluster $\set{x_{i-1},x_i}$.
Thus, equivalently, $x_{i-1}$ is obtained by exchanging $x_{i+1}$ out of the cluster $\set{x_i,x_{i+1}}$.
Write $\g_i$ for the $\g$-vector of $x_i$.
In our pictures and verbal descriptions, we will let $\rho_1$ point to the right of the plane and $\rho_2$ point up, give them the same length \emph{in the page}, and talk about slopes of lines in the usual sense for this placement of $\rho_1$ and $\rho_2$.

We follow \cite[Section~9]{universal} in characterizing the $\g_i$ directly.  (Compare \cite[Example~1.15]{GHKK} and \cite[Section~3]{CGMMRSW}.)
We begin by defining a polynomial $P_m$ in $ab$ for each $m\ge-2$.
Set $P_{-2}=-1$ and $P_{-1}=0$, and for $m\ge0$, define
\begin{equation}\label{Pm recur}
P_m=\begin{cases}
-abP_{m-1}-P_{m-2}&\text{if }m\text{ is even, or}\\
P_{m-1}-P_{m-2}&\text{if }m\text{ is odd.}\\
\end{cases}
\end{equation}
Several of the $P_m$ are shown in Table~\ref{pm table}.
\begin{table}[ht]
\begin{tabular}{|r|||c|c|c|c|c|c|c|c|}
\hline
$m$&$-2$&$-1$&$0$&$1$&$2$&$3$&$4$&$5$\\\hline\hline
$P_m$&	$-1$	&$0$	&$1$	&$1$	&$-ab-1$&$-ab-2$&$a^2b^2+3ab+1$	&$a^2b^2+4ab+3$	\\\hline
\end{tabular}\\[3pt]
\caption{The polynomials $P_m$}
\label{pm table}
\end{table}
The polynomials $P_m$ are defined in \cite[Section~9]{universal} using a summation formula, and a specific relationship \cite[(9.10)]{universal} is described between the $P_m$ and the Chebyshev polynomials of the second kind.
The recursive definition given here for the $P_m$ follows by the defining recursion for the Chebyshev polynomials.
As observed in \cite[Section~9]{universal}, each $P_m$ is positive for $m\ge0$.
By \cite[Proposition~9.6]{universal}, the $\g$-vectors are given by
\begin{equation}\label{ba gs}
\g_i=\begin{cases}
-P_{-i-1}\rho_1-aP_{-i-2}\rho_2&\text{if }i\text{ is odd and }i\le-1,\\
-bP_{-i-1}\rho_1+P_{-i-2}\rho_2&\text{if }i\text{ is even and }i\le0,\\
-P_{i-3}\rho_1-aP_{i-2}\rho_2&\text{if }i\text{ is odd and }i\ge1,\\
-bP_{i-3}\rho_1+P_{i-2}\rho_2&\text{if }i\text{ is even and }i\ge2,\\
\end{cases}
\end{equation}
Write $C_i$ for the cone spanned by the $\g$-vectors of $x_i$ and $x_{i+1}$.
In particular, $C_1$ is the dominant chamber $D$.
We write $\c_i$ for the positive root orthogonal to $\g_i$. 
Recalling that the fundamental roots $\rho_i$ are dual to the simple \emph{co-roots} $\alpha_i\ck=\delta_i^{-1}\alpha_i$ and observing that the diagonalizing factors in this case must be 
$\delta_1=\frac{\gcd(-a,b)}{b}$ and $\delta_2=\frac{\gcd(-a,b)}{-a}$, we compute
\begin{equation}\label{ba cs}    
\c_i=\begin{cases}
P_{-i-2}\alpha_1-aP_{-i-1}\alpha_2&\text{if }i\text{ is even and }i\le-2,\\
bP_{-i-2}\alpha_1+P_{-i-1}\alpha_2&\text{if }i\text{ is odd and }i\le-1,\\
\alpha_1&\text{if }i=0,\\
\alpha_2&\text{if }i=1,\text{ or},\\
P_{i-2}\alpha_1-aP_{i-3}\alpha_2&\text{if }i\text{ is even and }i\ge2,\\
bP_{i-2}\alpha_1+P_{i-3}\alpha_2&\text{if }i\text{ is odd and }i\ge3,\\
\end{cases}
\end{equation}

The fan $\gFan(B)$ covers all of $V^*$ except the positive linear span $C_\infty$ of the vectors 
\begin{align}
\begin{split}
\g_\infty&=-2\sqrt{-ab}\,\rho_1-a(\sqrt{-ab}+\sqrt{-ab-4})\,\rho_2\quad\text{and}\\
\g_{-\infty}&=-b(\sqrt{-ab}+\sqrt{-ab-4})\,\rho_1+2\sqrt{-ab}\,\rho_2.
\end{split}
\end{align}
These vectors are in the limiting directions of $\g_i$ as $i\to\infty$ and $i\to-\infty$ respectively.
As we discuss in Section~\ref{affrk2 sec}, in the affine case (when $ab=-4$), the vectors $\g_\infty$ and $\g_{-\infty}$ are parallel, so that $C_\infty$ is a single limiting ray.

The situation (with $a=-3$ and $b=2$) is represented in Figure~\ref{transscatba}, together with indications of notational conventions that were just described and that will be given below.
\begin{figure}
\scalebox{0.83}{\includegraphics{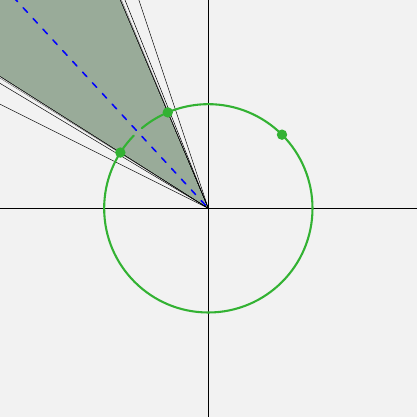}
\begin{picture}(0,0)(100,-100)
\put(-32,93){$x_3$}
\put(1,93){$x_2$}
\put(85,4){$x_1$}
\put(1,-96){$x_0$}
\put(-102,-9){$x_{-1}$}
\put(-102,37){$x_{-2}$}
\put(-21,75){$C_2$}
\put(52,57){$C_1$}
\put(52,-64){$C_0$}
\put(-70,-64){$C_{-1}$}
\put(-85,15){$C_{-2}$}
\put(-54,65){$C_\infty$}
\put(45,18){$\gamma_{-\infty}$}
\put(13,52){$\gamma_{\infty}$}
\put(-40,48){$\gamma_+$}
\put(-53,36){$\gamma_-$}
\put(-86,92){$\d_{p/q}$}
\end{picture}}
\caption{A transposed scattering diagram for an infinite rank-$2$ case}
\label{transscatba}
\end{figure}
A similar picture with $a=-2$ and $b=2$ appears later as Figure~\ref{transscat2-2}.

By Theorem~\ref{clus mon thm} and Theorem~\ref{clus easy root}, the walls of $\Scat^T(B)$ not contained in $C_\infty$ are the rays spanned by the $\g_i$, each marked with the function $(1+\hy^{\c_i})$.
The remaining possibilities for walls are the rational rays contained in $C_\infty$.
We consider a scattering diagram in the equivalence class of $\Scat^T(B)$ with one wall in each such rational ray, without assuming that the attached functions are nontrivial.
Thus, for each rational number in reduced form $p/q$ with $p>0$ and $q<0$ and with 
\begin{equation}\label{pq range}
\frac{a(\sqrt{-ab}+\sqrt{-ab-4})}{2\sqrt{-ab}}\le\frac{\,p\,}q\le\frac{2\sqrt{-ab}}{-b(\sqrt{-ab}+\sqrt{-ab-4})}\,,
\end{equation}
let $(\d_{p/q},f_{p/q})$ be the wall such that $\d_{p/q}$ is the ray spanned by $q\rho_1+p\rho_2$.
Then $f_{p/q}$ is a formal power series in $\hy^{\c_{p/q}}$, where $\c_{p/q}=\frac{bp}{\gcd(bp,aq)}\alpha_1+\frac{aq}{\gcd(bp,aq)}\alpha_2$ is the primitive element of $Q^+$ that is orthogonal to $q\rho_1+p\rho_2$.
The primitive element of $Q\ck$ parallel to $\c_{p/q}$ is $\c\ck_{p/q}=p\alpha\ck_1-q\alpha\ck_2$.

Fix $p/q$ satisfying \eqref{pq range} and choose a path $\gamma_\infty:(0,1]\to V^*\setminus\set{0}$ and a path ${\gamma_{-\infty}:(0,1]\to V^*\setminus\set{0}}$ such that
\begin{itemize} 
\item $\lim_{t\to0^+}\gamma_{\pm\infty}(t)=\g_{\pm\infty}$
\item $\gamma_{\pm\infty}(1)=\rho_1+\rho_2$
\item $\gamma_\infty$ moves in a strictly monotone-clockwise manner about $0$,
\item $\gamma_{-\infty}$ moves in a strictly monotone-counterclockwise manner about $0$.
\end{itemize}
For each $i\ge1$, let $\gamma_i$ be a subpath of $\gamma_{\infty}$ starting in the interior of $C_i$ and ending at $\rho_1+\rho_2$.
For $i\le1$, let $\gamma_i$ be a subpath of $\gamma_{-\infty}$ starting in the interior of $C_i$ and ending at $\rho_1+\rho_2$.
For consistency, take $\gamma_1$ to be the constant path at $\rho_1+\rho_2$.

We abbreviate $\p_{\gamma,\Scat^T(B)}$ as $\p_\gamma$ for any path $\gamma$.
We define $\p_\infty=\lim_{i\to\infty}\p_{\gamma_i}$.
This limit exists because for all $k$, there exists an $i_k$ such that, for $i\ge i_k$, the path obtained by deleting $\gamma_i$ from $\gamma_\infty$ crosses no wall of $\Scat^T(B)_k$.
We also define $\p_{-\infty}=\lim_{i\to-\infty}\p_{\gamma_i}$, and this limit exists for the analogous reason.
Indeed, the limit definitions are only necessary when $ab=-4$.  
When $ab<-4$, we may as well complete $\gamma_\infty$ and $\gamma_{-\infty}$ to paths from the full interval $[0,1]$ to $V^*\setminus\set{0}$. 
These paths are generic because $\g_\infty$ and $\g_{-\infty}$ are not contained in a rational ray, and the limits $\p_\infty$ and $\p_{-\infty}$ coincide with $\p_{\gamma_\infty}$ and $\p_{\gamma_{-\infty}}$.

We also define paths $\gamma_\pm:(0,1]\to V^*\setminus\set{0}$ such that
\begin{itemize} 
\item $\lim_{t\to0^+}\gamma_{\pm\infty}(t)=q\rho_1+p\rho_2$
\item $\gamma_{\pm\infty}(1)=\g_{\pm\infty}$
\item $\gamma_+$ moves in a strictly monotone-clockwise manner about $0$,
\item $\gamma_-$ moves in a strictly monotone-counterclockwise manner about $0$.
\end{itemize}
Write $\p_\pm$ for the path-ordered products obtained from the paths $\gamma_\pm$, taking appropriate limits as above.
When $ab=-4$, we think of $\gamma_\pm$ as empty paths and treat an automorphism $\p_\pm$ as the identity when it appears in formulas.

Write $\p_{p/q}$ for the wall-crossing automorphism $\p_{\gamma,\d_{p/q}}$ for a path $\gamma$ that crosses $\d_{p/q}$ with derivative $\rho_1+\rho_2$.

Since $p>0$ and $q<0$, in particular $p-q>0$.
Since $f_{p/q}$ is a (univariate) formal power series in $\hy^{\c_{p/q}}$, the following proposition determines $f_{p/q}$ completely.

\begin{prop}\label{diagonal terms ba}
For all $k\ge0$, the coefficient of $\hy^{k\c_{p/q}}$ in $f_{p/q}$ equals the coefficient of $\hy^{k\c_{p/q}}$ in $\bigl(\frac{x_1x_2}{\p_{-\infty}(\p_-(x_1x_2))}\bigr)^{\frac1{p-q}}$.
Each $\hy^{k\c_{p/q}}$ is $\hy_1^i\hy_2^j$ with $\frac ji=\frac{aq}{bp}$, and every other term in $\bigl(\frac{x_1x_2}{\p_{-\infty}(\p_-(x_1x_2))}\bigr)^{\frac1{p-q}}$ involves $\hy_1^i\hy_2^j$ with $\frac ji<\frac{aq}{bp}$.
\end{prop}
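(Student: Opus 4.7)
The plan is to invoke consistency of $\Scat^T(B)$ through a loop identity and then to isolate $f_{p/q}^{p-q}$ by tracking the ``slope'' $j/i$ of each monomial $\hy_1^i\hy_2^j$. The concatenation
\[
\gamma_{-\infty}^{-1}\cdot\gamma_-^{-1}\cdot(\text{cross }\d_{p/q}\text{ from the counterclockwise to the clockwise side})\cdot\gamma_+\cdot\gamma_\infty
\]
is a clockwise loop at $\rho_1+\rho_2$ crossing every wall of $\Scat^T(B)$ exactly once, so by consistency its path-ordered product is the identity, giving $\p_\infty\circ\p_+\circ\p_{p/q}=\p_{-\infty}\circ\p_-$. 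Apply both sides to $x_1x_2=x^{\rho_1+\rho_2}$: invoking \eqref{theta def x} with the sign appropriate to the crossing together with $\br{\rho_1+\rho_2,\c\ck_{p/q}}=p-q$ yields $\p_{p/q}(x^{\rho_1+\rho_2})=x^{\rho_1+\rho_2}f_{p/q}^{-(p-q)}$, and then, since $\p_\infty\circ\p_+$ is an algebra automorphism,
\[
\frac{x_1x_2}{\p_{-\infty}(\p_-(x_1x_2))}=\frac{x_1x_2\cdot\p_\infty(\p_+(f_{p/q}^{p-q}))}{\p_\infty(\p_+(x_1x_2))}.
\]

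The crux is a slope estimate: every wall $\d$ crossed by $\gamma_+$ or $\gamma_\infty$ has primitive normal $\c\in Q^+$ of slope strictly less than $aq/bp$. For the walls $\d_{p'/q'}$ of $\gamma_+$, this follows from $\c_{p'/q'}=(bp'/\gcd)\alpha_1+(aq'/\gcd)\alpha_2$ combined with the monotonicity of $aq'/bp'=(a/b)/(p'/q')$ in $p'/q'$, since $\gamma_+$ crosses only walls with $p'/q'<p/q$. For the walls $\g_i$ with $i\ge2$ of $\gamma_\infty$, the slopes of the $\c_i$ read off from \eqref{ba cs} approach from below the slope of $\c_\infty$, which equals the value of $aq/bp$ at the excluded lower endpoint of \eqref{pq range} and is therefore strictly less than $aq/bp$ for every allowed $p/q$. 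This is the key asymmetry favoring the clockwise side: walls crossed on the counterclockwise side (by $\gamma_-,\gamma_{-\infty}$) have slope strictly greater than $aq/bp$, which is why consistency is used to replace $\p_{-\infty}\circ\p_-$ by its clockwise counterpart. A critical technical point is that no $\c_\d$ from $\gamma_+\cup\gamma_\infty$ equals $\alpha_2$, since these paths do not cross $\g_1$; this avoids any pure-$\hy_2$ (undefined-slope) factors.

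Because each wall-crossing multiplies its argument by a power of $f_\d\in\k[[\hy^{\c_\d}]]$, and because slopes combine under multiplication as weighted averages, the slope estimate propagates, yielding
\[
\p_\infty(\p_+(f_{p/q}^{p-q}))=f_{p/q}^{p-q}+R_1,\qquad \frac{\p_\infty(\p_+(x_1x_2))}{x_1x_2}=1+R_2,
\]
with every monomial of $R_1,R_2$ of slope strictly less than $aq/bp$. Substituting into the displayed identity, expanding the quotient geometrically, and extracting the formal $(p-q)$-th root each preserve a decomposition of the form (power of $f_{p/q}$) plus monomials of slope less than $aq/bp$, producing
\[
\left(\frac{x_1x_2}{\p_{-\infty}(\p_-(x_1x_2))}\right)^{1/(p-q)}=f_{p/q}+(\text{monomials of slope }<aq/bp),
\]
which is the proposition. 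The main obstacle is verifying robustness of the strict slope inequality through the infinite limits defining $\p_\infty$ and $\p_{-\infty}$, through reciprocation, and through root extraction; all of these go through because the inequality is strict termwise for every wall on the clockwise side and because no degenerating pure-$\hy_2$ factors ever arise.
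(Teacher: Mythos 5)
Your argument is correct and is essentially the paper's own proof: the same consistency identity $\p_\infty\circ\p_+\circ\p_{p/q}=\p_{-\infty}\circ\p_-$, the same computation $\p_{p/q}(x_1x_2)=x_1x_2f_{p/q}^{q-p}$, and the same key observation that the wall-crossings along $\gamma_+$ and $\gamma_\infty$ only introduce monomials $\hy^\beta$ whose slope is strictly less than that of $\c_{p/q}$, so that the slope-$\frac{aq}{bp}$ part of the quotient is exactly $f_{p/q}^{q-p}$ and the $(p-q)$-th root recovers $f_{p/q}$. (One small imprecision: when $ab=-4$ the lower endpoint of \eqref{pq range} is not excluded---it equals $p/q$---but your conclusion survives because the slopes of the finite-index normals $\c_i$ approach that limiting value strictly from below.)
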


\begin{proof}
Consistency of the scattering diagram says that ${\p_{-\infty}\circ\p_-\circ\p_{p/q}^{-1}\circ\p_+^{-1}\circ\p_\infty^{-1}}$ is the identity map, so we apply it to $\p_\infty(\p_+(\p_{p/q}(x_1x_2))$ to obtain
\begin{equation}\label{diag terms ba starting point}
\p_{-\infty}(\p_-(x_1x_2))=\p_\infty(\p_+(\p_{p/q}(x_1x_2)).
\end{equation}
We calculate $\p_{p/q}x_1x_2=x_1x_2 f_\d^{\br{\rho_1+\rho_2,-p\alpha_1\ck+q\alpha_2\ck}}=x_1x_2f_{p/q}^{q-p}$.
Thus the quantity $\p_\infty(\p_+(\p_{p/q}(x_1x_2))$ is computed by starting with $x_1x_2f_{p/q}^{q-p}$ and repeatedly replacing a monomial by the same monomial times an integer power of a power series in $\hy^\beta$ for various $\beta\in Q^+$.
Specifically, each $\beta$ is of the form $c_1\alpha_1+c_2\alpha_2$ such that the slope $\frac{c_2}{c_1}$ of the ray spanned by $\beta$ is positive but strictly less than $\frac{aq}{pb}$, which is the slope of the ray spanned by $\c_{p/q}$.

Therefore, by \eqref{diag terms ba starting point}, every term in $\frac{\p_{-\infty}(\p_-(x_1x_2))}{x_1x_2}$ involves $\hy_1^i\hy_2^j$ with $\frac ji\le\frac{aq}{bp}$, and the terms where $\frac ji=\frac{aq}{bp}$ are exactly $f_{p/q}^{q-p}$.
Thus, to find $f_{p/q}$, we raise $\frac{\p_{-\infty}(\p_-(x_1x_2))}{x_1x_2}$ to the power $\frac1{q-p}$ and restrict to terms involving $\hy_1^i\hy_2^j$ with $\frac ji=\frac{aq}{bp}$.
\end{proof}

By definition, $\p_{-\infty}(\p_-(x_1x_2))$ is $\p_{-\infty}(x_1x_2\cdot\M_{p/q}(\hy_1,\hy_2))$ for some $\M_{p/q}(\hy_1,\hy_2)$ in $\k[[\hy]]$.
Thus 
\begin{equation}\label{p M}
\p_{-\infty}(\p_-(x_1x_2))=\p_{-\infty}(x_1x_2)\cdot\p_{-\infty}(\M_{p/q}(\hy_1,\hy_2)).
\end{equation}
The factor $\p_{-\infty}(\M_{p/q}(\hy_1,\hy_2))$ is difficult to deal with in general.
This is the reason we eventually restrict to the affine case, where $\p_{-\infty}(\M_{p/q}(\hy_1,\hy_2))=1$.
However, before restricting to the affine case, we make a general observation about the factor $\p_{-\infty}(x_1x_2)$.
The observation uses the following lemma, which is easily verified using \eqref{Pm recur} and~\eqref{ba gs}.
\begin{lemma}\label{deal with gs}
For $i\le-2$, \,
$\g_i=\begin{cases}
b\g_{i+1}-\g_{i+2}&\text{if }i\text{ is even, or}\\
-a\g_{i+1}-\g_{i+2}&\text{if }i\text{ is odd.}\\
\end{cases}$
\end{lemma}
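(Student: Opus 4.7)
The plan is a direct case analysis on the parity of $i$, substituting the explicit formula \eqref{ba gs} for $\g_i$, $\g_{i+1}$, and $\g_{i+2}$ and checking each identity against the recursion \eqref{Pm recur}. The key observation is that whenever $i\le-2$, all three of $\g_i,\g_{i+1},\g_{i+2}$ fall into the $i\le0$ branches of \eqref{ba gs}, and since their parities alternate, the three $\g$-vectors are given by the even/odd/even or odd/even/odd formulas respectively.

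When $i$ is even (so $-i-1$ is odd and $-i-2$ is even), I would expand $b\g_{i+1}-\g_{i+2}$ in the basis $\rho_1,\rho_2$. Matching the $\rho_1$-coefficient with that of $\g_i$ reduces, after cancelling the common factor $b$, to $P_{-i-1}=P_{-i-2}-P_{-i-3}$, which is the odd-index clause of \eqref{Pm recur} at $m=-i-1$. Matching the $\rho_2$-coefficient reduces to $P_{-i-2}=-abP_{-i-3}-P_{-i-4}$, which is the even-index clause of \eqref{Pm recur} at $m=-i-2$. The case where $i$ is odd is symmetric: expanding $-a\g_{i+1}-\g_{i+2}$, the $\rho_1$-coefficient yields the even-index recursion for $P_{-i-1}$ (now at an even $m$), and the $\rho_2$-coefficient, after cancelling $-a$, yields the odd-index recursion for $P_{-i-2}$.

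There is no real obstacle; the lemma is essentially a repackaging of the Chebyshev-style recursion \eqref{Pm recur} in the language of $\g$-vectors. The only care required is to track the parities of $-i-1,-i-2,-i-3,-i-4$ when selecting the appropriate clause of \eqref{ba gs} for $\g_{i+1}$ and $\g_{i+2}$ and the appropriate clause of \eqref{Pm recur} for each of the two scalar identities. The shape of the result, with alternating coefficients $b$ and $-a$, is precisely what will be needed downstream to rewrite $\p_{-\infty}(x_1x_2)$-type expressions along the limiting direction $\g_{-\infty}$.
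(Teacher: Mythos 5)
Your proof is correct and is exactly the verification the paper has in mind: the paper gives no written argument, stating only that the lemma "is easily verified using \eqref{Pm recur} and~\eqref{ba gs}," and your coefficient-by-coefficient check (including the correct parity bookkeeping for $-i-1,\ldots,-i-4$ and the observation that all three $\g$-vectors lie in the $i\le 0$ branches of \eqref{ba gs}) fills in that routine computation accurately.
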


We will also need a polynomial $Q_m$ in $a$ and $b$ for $m\ge0$, given by $Q_0=1$ and $Q_1=-1$ and by the recursion 
\begin{equation}\label{Qm recur}
Q_m=\begin{cases}
bQ_{m-1}-Q_{m-2}&\text{if }m\text{ is even, or}\\
-aQ_{m-1}-Q_{m-2}&\text{if }m\text{ is odd.}
\end{cases}
\end{equation}
Several values of $Q_m$ are shown in Table~\ref{qm table}.
\begin{table}[ht]
\begin{tabular}{|r||c|c|c|c|c|c|c|c|}
\hline
$m$&$0$&$1$&$2$&$3$&$4$&$5$\\\hline\hline
$Q_m$&	{\small$1$}	&{\small$-1$}	&{\small$-b-1$}&{\small$ab+a+1$}&{\small$ab^2+ab+2b+1$}&{\small$-a^2b^2-a^2b-3ab-2a-1$}	\\\hline
\end{tabular}\\[3pt]
\caption{The polynomials $Q_m$}
\label{qm table}
\end{table}

\begin{prop}\label{p inf lim}
If $F_i$ is the $F$-polynomial of the cluster variable $x_i$, then
\begin{equation}\label{p inf lim eq}
\frac{x_1x_2}{\p_{-\infty}(x_1x_2)}=\lim_{i\to-\infty}F_i^{Q_{-i-1}}\cdot F_{i+1}^{-Q_{-i}}.
\end{equation}
\end{prop}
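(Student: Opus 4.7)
The plan is to compute $\p_{\gamma_i}(x_1x_2)$ explicitly for each $i\le -1$ and then pass to the limit. Since $\gamma_i$ runs from the interior of $C_i\in\gFan(B)$ to the interior of the dominant chamber $D$, and since both $\g_i$ and $\g_{i+1}$ lie in $C_i$ (as its extreme rays), Corollary~\ref{clus mon F} applied with $\lambda=\g_i$ and then $\lambda=\g_{i+1}$ gives
\[
\p_{\gamma_i}(x^{\g_i})=x^{\g_i}F_i\qquad\text{and}\qquad\p_{\gamma_i}(x^{\g_{i+1}})=x^{\g_{i+1}}F_{i+1}.
\]
The strategy is to write $\rho_1+\rho_2$ as an integer linear combination of $\g_i$ and $\g_{i+1}$, then use multiplicativity of $\p_{\gamma_i}$ on Laurent monomials in $x$ (guaranteed by the reinterpretation of $\p_{\gamma,\D}$ on Laurent monomials preceding Proposition~\ref{just x's}) to reduce $\p_{\gamma_i}(x_1x_2)$ to a product of the two expressions above.

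The central technical step is the lattice identity
\[
\rho_1+\rho_2\;=\;-Q_{-i-1}\,\g_i+Q_{-i}\,\g_{i+1}\qquad\text{for every }i\le -1,
\]
which matches the exponents appearing in~\eqref{p inf lim eq}. I will prove this by induction on $-i$. The base case $i=-1$ follows from~\eqref{ba gs} (which gives $\g_{-1}=-\rho_1$ and $\g_0=-\rho_2$) together with $Q_0=1$ and $Q_1=-1$. For the inductive step I apply Lemma~\ref{deal with gs} to express $\g_{i-1}$ as either $b\g_i-\g_{i+1}$ or $-a\g_i-\g_{i+1}$ depending on the parity of $i-1$, and match this against the parity-split recursion~\eqref{Qm recur} for the $Q_m$; the two recursions are set up so that the identity at index $i-1$ follows from the identity at index $i$ by a direct substitution, with the coefficient of $\g_{i+1}$ automatically preserved.

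Granted the lattice identity, multiplicativity of $\p_{\gamma_i}$ on Laurent monomials yields
\[
\p_{\gamma_i}(x_1x_2)=\bigl(x^{\g_i}F_i\bigr)^{-Q_{-i-1}}\bigl(x^{\g_{i+1}}F_{i+1}\bigr)^{Q_{-i}}=x_1x_2\cdot F_i^{-Q_{-i-1}}F_{i+1}^{Q_{-i}},
\]
where the $x$-exponents reassemble to $\rho_1+\rho_2$ via the identity. Rearranging gives $\frac{x_1x_2}{\p_{\gamma_i}(x_1x_2)}=F_i^{Q_{-i-1}}F_{i+1}^{-Q_{-i}}$, and since $\p_{-\infty}=\lim_{i\to-\infty}\p_{\gamma_i}$ by definition, applying both sides to $x_1x_2$ and passing to the limit produces~\eqref{p inf lim eq}. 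I expect the inductive verification of the lattice identity to be the main obstacle; the alignment of the parity-dependent recursions for the $\g$-vectors and for the $Q_m$ must be tracked carefully, but once this bookkeeping is done the rest is a direct application of Corollary~\ref{clus mon F}.
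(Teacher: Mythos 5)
Your proposal is correct and follows essentially the same route as the paper: the paper likewise writes $x_i=\p_{\gamma_i}(x^{\g_i})$ and $x_{i+1}=\p_{\gamma_i}(x^{\g_{i+1}})$, establishes the identity $-Q_{-i-1}\g_i+Q_{-i}\g_{i+1}=\rho_1+\rho_2$ by the same induction using Lemma~\ref{deal with gs} and \eqref{Qm recur}, and then passes to the limit. The only cosmetic difference is that you invoke Corollary~\ref{clus mon F} directly where the paper cites Theorems~\ref{clus mon pop root} and~\ref{clus mon thm}.
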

\begin{proof}
We use Theorem~\ref{clus mon pop root} and Theorem~\ref{clus mon thm} to write $x_i=\p_{\gamma_i}(x^{\g_i})$ and $x_{i+1}=\p_{\gamma_i}(x^{\g_{i+1}})$ for all $i\in\integers$.
We are justified in using the same path $\gamma_i$ in both of these equations because $C_i$ is the cone spanned by $\g_i$ and $\g_{i+1}$.
By an easy induction using Lemma~\ref{deal with gs} and \eqref{Qm recur}, we show that $-\g_iQ_{-i-1}+\g_{i+1}Q_{-i}=\rho_1+\rho_2$ for all $i\le-1$.
Thus $x_i^{-Q_{-i-1}}x_{i+1}^{Q_{-i}}=\p_{\gamma_i}(x_1x_2)$.
Thus $\p_{-\infty}(x_1x_2)$ is $\lim_{i\to-\infty}x_i^{-Q_{-i-1}}x_{i+1}^{Q_{-i}}$, which equals $x_1x_2\lim_{i\to-\infty}F_i^{-Q_{-i-1}}\cdot F_{i+1}^{Q_{-i}}$.
\end{proof}

\subsection{Rank-2 affine type}\label{affrk2 sec}
In the affine cases (when $ab=-4$), the considerations of Section~\ref{rk2 inf sec} are sufficient to determine the function attached to the limiting wall.
We will prove the following theorem.
The remaining affine cases can be obtained from the theorem by Proposition~\ref{scat antip} and/or by swapping the indices $1$ and $2$.

\begin{thm}\label{rk2 aff formula}
The function on the limiting wall of $\Scat^T\bigl(\begin{bsmallmatrix}\,\,\,\,\,0&\,2\\-2&\,0\end{bsmallmatrix}\bigr)$ is $\displaystyle\frac1{(1-\hy_1\hy_2)^2}$.
The function on the limiting wall of $\Scat^T\bigl(\begin{bsmallmatrix}\,\,\,\,\,0&\,1\\-4&\,0\end{bsmallmatrix}\bigr)$ is $\displaystyle\frac{1+\hy_1\hy_2^2}{(1-\hy_1\hy_2^2)^2}$.
\end{thm}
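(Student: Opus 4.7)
The plan is to specialize Propositions~\ref{diagonal terms ba} and~\ref{p inf lim} to the case $ab=-4$. In that case the paths $\gamma_\pm$ are empty and $\p_\pm$ is the identity, so $\M_{p/q}(\hy_1,\hy_2)=1$ and Proposition~\ref{diagonal terms ba} simplifies to the assertion that $f_{p/q}$ is obtained from $\bigl(\tfrac{x_1x_2}{\p_{-\infty}(x_1x_2)}\bigr)^{1/(p-q)}$ by extracting terms of the form $\hy^{k\c_{p/q}}$. The limiting ``region'' collapses to a single ray, which I compute from the formula for $\g_\infty$ in Section~\ref{rk2 inf sec}: for $B=\begin{bsmallmatrix}\,\,\,\,\,0&\,2\\-2&\,0\end{bsmallmatrix}$ it is spanned by $-\rho_1+\rho_2$, so $p=1$, $q=-1$, $p-q=2$, and $\c_{p/q}=\alpha_1+\alpha_2$; for $B=\begin{bsmallmatrix}\,\,\,\,\,0&\,1\\-4&\,0\end{bsmallmatrix}$ it is spanned by $-\rho_1+2\rho_2$, so $p=2$, $q=-1$, $p-q=3$, and $\c_{p/q}=\alpha_1+2\alpha_2$.

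Proposition~\ref{p inf lim} then reduces the theorem to evaluating
\[L:=\lim_{i\to-\infty}F_i^{\,Q_{-i-1}}\cdot F_{i+1}^{\,-Q_{-i}}\,\in\,\k[[\hy]],\]
taking its $(p-q)$-th root, and keeping only the terms along the diagonal $\hy^{k\c_{p/q}}$, which in the two cases are $(\hy_1\hy_2)^k$ and $(\hy_1\hy_2^2)^k$ respectively. Solving the recursion~\eqref{Qm recur} gives the closed forms $Q_m=1-2m$ for $a=-2,b=2$ and, by parity, $Q_{2k}=1-3k$ together with $Q_{2k+1}=-1-6k$ for $a=-4,b=1$. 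In both cases $|Q_{-i-1}|$ and $|Q_{-i}|$ grow linearly in $|i|$, and convergence of $L$ depends crucially on the identity $-\g_iQ_{-i-1}+\g_{i+1}Q_{-i}=\rho_1+\rho_2$ already invoked in the proof of Proposition~\ref{p inf lim}, which forces enormous cancellation between $F_i^{Q_{-i-1}}$ and $F_{i+1}^{-Q_{-i}}$.

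The main obstacle is to actually compute $L$. My plan is to write out the $F$-polynomials $F_i$ inductively using the alternating affine exchange relation $x_{i-1}x_{i+1}=x_i^{|a|}+\hy\text{-monomial}$ (with $|b|$ replacing $|a|$ on the opposite parity). Because $ab=-4$, these recursions are of Chebyshev type, and I expect that modulo terms of slope strictly less than $aq/bp$, each $F_i$ factors as $\prod_{k\ge1}(1-\hy^{k\c_{p/q}})^{-m_{i,k}}$ with multiplicities $m_{i,k}$ that can be tracked through the recursion. Since $\M_{p/q}=1$ in the affine case, all the off-diagonal (lower-slope) contributions must cancel in the combination $Q_{-i-1}\log F_i-Q_{-i}\log F_{i+1}$, and my task is to verify this cancellation and read off the surviving diagonal contribution as a finite-coefficient expression in $\hy^{\c_{p/q}}$.

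Finally, extracting the diagonal part of $L^{1/(p-q)}$ should yield $\sum_{k\ge0}(k+1)(\hy_1\hy_2)^k=(1-\hy_1\hy_2)^{-2}$ in the first case and $\sum_{k\ge0}(2k+1)(\hy_1\hy_2^2)^k=(1+\hy_1\hy_2^2)(1-\hy_1\hy_2^2)^{-2}$ in the second, matching the asserted formulas. The remaining affine exchange matrices (those with $ab=-4$ not covered directly) then follow by Proposition~\ref{scat antip} and by swapping the indices $1\leftrightarrow2$, as noted in the statement.
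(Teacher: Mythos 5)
Your reduction is exactly the paper's: specialize Propositions~\ref{diagonal terms ba} and~\ref{p inf lim} to $ab=-4$, where $\p_\pm$ is the identity, identify the limiting ray and the root $\c_{p/q}$ in each case, and reduce to extracting the diagonal part of the $(p-q)$-th root of $L=\lim_{i\to-\infty}F_i^{Q_{-i-1}}F_{i+1}^{-Q_{-i}}$. Your values of $p$, $q$, $\c_{p/q}$ and the closed forms for $Q_m$ are all correct. But the entire substance of the theorem lies in actually evaluating that limit, and this you do not do: the paragraph beginning ``My plan is to write out the $F$-polynomials\dots'' replaces the computation with an unverified ansatz (``I expect that \dots each $F_i$ factors as $\prod_k(1-\hy^{k\c_{p/q}})^{-m_{i,k}}$'') and ends with ``my task is to verify this cancellation,'' which is precisely the task left undone. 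The paper's proof of this step is an induction on the exchange recursion for the $F$-polynomials (Lemmas~\ref{F coeffs} and~\ref{F coeffs asym}) showing that the diagonal part of $F_i$ is a truncation of $(1-\hy_1\hy_2)^{-2}$ (resp.\ of $\frac{1+\hy_1\hy_2^2}{(1-\hy_1\hy_2^2)^2}$ and its square, by parity) to an order growing linearly in $|i|$ --- note that a truncated $(1-x)^{-2}$ is not literally of your product form, so your ansatz as stated would need repair.

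Two further points are missing and would sink the argument even if the diagonal parts were known. First, since $F_i$ and $F_{i+1}$ are raised to exponents $Q_{-i-1},-Q_{-i}$ that grow linearly in $|i|$, knowing their diagonal parts is not enough: a product of a sub-diagonal and a super-diagonal term can land on the diagonal, so one must also bound the super-diagonal terms (the paper shows the only ones have $\hy_1$-degree at least $-i$, which is assertion \eqref{others} of Lemma~\ref{F coeffs} and assertions \eqref{super asym}, \eqref{super asym odd} of Lemma~\ref{F coeffs asym}); your claim that the off-diagonal contributions ``must cancel'' is both unproved and not what happens --- the lower-slope terms survive and are simply discarded by Proposition~\ref{diagonal terms ba}, while it is the higher-slope contamination of the diagonal that must be ruled out. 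Second, in the case $a=-4$, $b=1$ the recursion alternates between a linear and a quartic exchange, the even- and odd-indexed $F$-polynomials have genuinely different diagonal behavior, and one must pass to the subsequence of even $i$ (as in Proposition~\ref{diagonal terms asym}); your proposal does not address this, and the resulting induction (the paper's Lemma~\ref{F coeffs asym}) is considerably more delicate than the symmetric case.
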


We start with the first assertion, which is the case $a=-2$ and $b=2$ in the notation of Section~\ref{rk2 inf sec}.
Figure~\ref{transscat2-2} shows $\Scat^T(B)$ in this case.
\begin{figure}
\scalebox{0.83}{\includegraphics{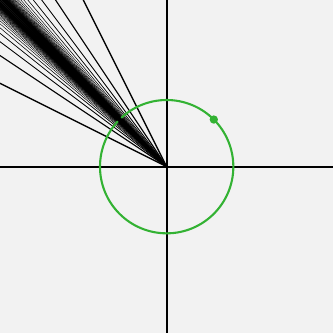}
\begin{picture}(0,0)(80,-80)
\put(-37,73){$x_3$}
\put(1,73){$x_2$}
\put(65,4){$x_1$}
\put(1,-76){$x_0$}
\put(-82,-9){$x_{-1}$}
\put(-82,27){$x_{-2}$}
\put(-22,55){$C_2$}
\put(32,37){$C_1$}
\put(32,-44){$C_0$}
\put(-50,-44){$C_{-1}$}
\put(-65,10){$C_{-2}$}
\put(28,12){$\gamma_{-\infty}$}
\put(5,35){$\gamma_{\infty}$}
\end{picture}}
\caption{A transposed scattering diagram for the skew-symmetric affine rank-$2$ case}
\label{transscat2-2}
\end{figure}
For these values of $a$ and $b$, the slope of the limiting ray is $p/q=-1$, the positive root orthogonal to that ray is $\c_{-1}=\alpha_1+\alpha_2$, and $Q_m=-2m+1$.
Combining Propositions~\ref{diagonal terms ba} and~\ref{p inf lim} in this case, we obtain the following proposition.
\begin{prop}\label{diagonal terms}
If $a=-2$ and $b=2$, then for all $k\ge0$, the coefficient of $\hy_1^k\hy_2^k$ in $f_{-1}$ equals the coefficient of $\hy_1^k\hy_2^k$ in $\sqrt{\lim_{i\to-\infty}F_i^{2i+3}\cdot F_{i+1}^{-2i-1}}$.
\end{prop}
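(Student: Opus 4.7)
The plan is to obtain Proposition~\ref{diagonal terms} as a direct specialization of Propositions~\ref{diagonal terms ba} and~\ref{p inf lim} to the affine data $a=-2$, $b=2$. With this data the limiting slope is $p/q = -1$ (take $p=1$, $q=-1$), so $aq/bp = 1$ and the primitive positive root orthogonal to the limiting ray is $\c_{-1}=\alpha_1+\alpha_2$. Consequently the ``diagonal'' monomials $\hy^{k\c_{p/q}}$ appearing in Proposition~\ref{diagonal terms ba} are exactly the monomials $\hy_1^k\hy_2^k$ that appear in Proposition~\ref{diagonal terms}.

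Next I would invoke the affine simplifications already flagged in the text: because $ab=-4$, the paths $\gamma_\pm$ are empty and the factor $\M_{p/q}(\hy_1,\hy_2)$ equals $1$, so $\p_-$ is the identity and
\begin{equation*}
\p_{-\infty}(\p_-(x_1x_2)) = \p_{-\infty}(x_1x_2).
\end{equation*}
With $p-q=2$, Proposition~\ref{diagonal terms ba} then becomes the statement that the coefficient of $\hy_1^k\hy_2^k$ in $f_{-1}$ equals the coefficient of $\hy_1^k\hy_2^k$ in $\bigl(x_1x_2/\p_{-\infty}(x_1x_2)\bigr)^{1/2}$. Substituting Proposition~\ref{p inf lim} replaces the inner ratio by the limit $\lim_{i\to-\infty}F_i^{Q_{-i-1}}F_{i+1}^{-Q_{-i}}$.

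The final step is to identify $Q_m$ explicitly in this case. A short induction using the recursion \eqref{Qm recur} (with $b=2$ in the even case and $-a=2$ in the odd case, so both branches agree) shows that $Q_m = 1-2m$ for all $m\ge 0$. Hence $Q_{-i-1}=2i+3$ and $-Q_{-i}=-2i-1$, which matches the exponents in the statement. Taking the square root is unambiguous because every $F$-polynomial has constant term $1$, so the limit has constant term $1$ and admits a unique square root with constant term $1$; taking this root commutes with reading off coefficients of $\hy_1^k\hy_2^k$, as in the argument for Proposition~\ref{diagonal terms ba}.

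I do not expect any genuine obstacle here: the work has already been carried out in the proofs of Propositions~\ref{diagonal terms ba} and~\ref{p inf lim}, and the only things to double-check are the bookkeeping of exponents (the closed form for $Q_m$ and the value of $p-q$) and the elementary well-definedness of the square root.
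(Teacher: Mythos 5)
Your proposal is correct and is essentially identical to the paper's argument: the paper likewise obtains Proposition~\ref{diagonal terms} by specializing Propositions~\ref{diagonal terms ba} and~\ref{p inf lim} to $a=-2$, $b=2$, noting $p/q=-1$, $\c_{-1}=\alpha_1+\alpha_2$, $\p_-=\mathrm{id}$, and $Q_m=-2m+1$. The extra remark on well-definedness of the square root is fine but not needed beyond what the paper already assumes.
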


We will call a term \newword{diagonal} if it is a constant times $\hy_1^k\hy_2^k$ and \newword{super-diagonal} if it is a constant times $\hy_1^j\hy_2^k$ for $k>j$.
The following lemma will let us evaluate the limit in Proposition~\ref{diagonal terms}.

\begin{lemma}\label{F coeffs}
When $a=-2$ and $b=2$, the $F$-polynomial $F_i$ has the following properties for $i\le 0$.
\begin{enumerate}[\rm(i)]
\item \label{leading}
As a polynomial in $\hy_1$ (with coefficients polynomials in $\hy_2$), the leading term is $\hy_1^{-i}(1+\hy_2)^{-i+1}$.
\item \label{others}
The only super-diagonal term is $\hy_1^{-i}\hy_2^{-i+1}$.
\item \label{key terms}
The diagonal terms are $(j+1)\hy_1^j\hy_2^j$ for $0\le j\le-i$.
\end{enumerate}
\end{lemma}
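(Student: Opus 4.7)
The plan is to prove the lemma by induction on $-i$, using the $F$-polynomial recursion
\[F_{i+1}F_{i-1} = F_i^2 + \hy_1^{-i-1}\hy_2^{-i},\qquad i \le -1.\]
This recursion comes from the two-term exchange $x_{i-1}x_{i+1} = x_i^2 + y_1^{-i-1}y_2^{-i}$ in $\A_\bullet(B)$, which I will derive by iteratively mutating the tall extended matrix $\begin{bsmallmatrix}B\\I\end{bsmallmatrix}$ through the seeds $\{x_i,x_{i+1}\}$: the principal block alternates between $B$ and $-B$, and the coefficient rows accumulate so that the exchange monomial at each step is as claimed. Writing $x_i = x^{\g_i}F_i(\hy)$, substituting the $\g$-vectors from~\eqref{ba gs}, and using the linearity $\g_{i-1}+\g_{i+1} = 2\g_i$ (the affine specialization of Lemma~\ref{deal with gs}) makes the $x$-factors cancel and yields the recursion in the stated form.

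The base cases $i = 0$ and $i = -1$ are handled by direct computation of the corresponding cluster variables, giving $F_0 = 1+\hy_2$ and $F_{-1} = 1 + \hy_1(1+\hy_2)^2$; both clearly satisfy (i), (ii), and~(iii).

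For the inductive step, write $F_i = \sum_{k=0}^{-i}\hy_1^k\,[F_i]_k$ with $[F_i]_k \in \k[\hy_2]$; the three conclusions become (i$'$) $[F_i]_{-i} = (1+\hy_2)^{-i+1}$, (ii$'$) $\deg_{\hy_2}[F_i]_k \le k$ for $0 \le k < -i$, and (iii$'$) the coefficient of $\hy_2^k$ in $[F_i]_k$ equals $k+1$ for $0 \le k \le -i$. Comparing leading $\hy_1$-coefficients on both sides of the recursion immediately gives (i$'$) for $F_{i-1}$. Since (iii$'$) at $k=0$ forces $[F_{i+1}]_0 = 1$, the recursion rearranges to an explicit formula for $[F_{i-1}]_k$ in terms of $[F_i^2]_k$ and a convolution of lower-index $[F_{i+1}]_j\,[F_{i-1}]_{k-j}$, which I iterate upward in $k$. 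For $0 \le k < -i$, the inductive $\hy_2$-degree bounds give (ii$'$), and the combinatorial identity $\sum_{a=0}^{k}(a+1)(k-a+1) = \binom{k+3}{3}$ applied to $[F_i^2]_k$ and to the convolution term delivers (iii$'$).

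The main obstacle is the boundary case $k = -i$, where the super-diagonal term $\hy_1^{-i}\hy_2^{-i+1}$ of $F_i$ (inside the leading coefficient $(1+\hy_2)^{-i+1}$) produces $\hy_2^{-i+1}$-contributions on both sides of the recursion. A direct computation shows that two contributions of $2$, coming from $[F_i]_{-i}\cdot[F_i]_0$ cross-products on one side and from $[F_{i+1}]_{-i-1}\cdot[F_{i-1}]_1$ on the other, cancel exactly, yielding (ii$'$) at $k = -i$. The parallel computation for (iii$'$) at $k = -i$ works precisely when the constant term of $[F_{i-1}]_1$ equals $-i+1$; this forces me to strengthen the inductive hypothesis with the invariant $[F_i]_1|_{\hy_2^0} = -i$, which propagates under the linear recursion $[F_{i-1}]_1|_{\hy_2^0} = 2[F_i]_1|_{\hy_2^0} - [F_{i+1}]_1|_{\hy_2^0}$ (the $\hy_1^1$-component of the main recursion) with base values $0$ and $1$. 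With this extra bookkeeping included, the induction closes and all three parts of the lemma follow.
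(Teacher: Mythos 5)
Your proposal is correct and follows the paper's proof in all essentials: the same three-term recursion $F_{i+2}F_i=F_{i+1}^2+\hy_1^{-i-2}\hy_2^{-i-1}$ (quoted there from \cite[Proposition~5.1]{ca4}), the same base cases $F_0=1+\hy_2$ and $F_{-1}=1+\hy_1(1+\hy_2)^2$, and an induction that tracks only the needed coefficients through the polynomial division. The one substantive difference is organizational: the paper grades monomials by the excess $\deg_{\hy_2}-\deg_{\hy_1}$ (diagonal/super-diagonal/sub-diagonal), and since the denominator's unique top-excess term is a single monomial and excess is additive under multiplication, the sub-diagonal parts of numerator, denominator, and quotient decouple entirely from the computation; your grading by $\hy_1$-degree breaks that decoupling at the boundary index $k=-i$, which is exactly why you are forced to carry the auxiliary invariant $[F_i]_1|_{\hy_2^0}=-i$. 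Your extra invariant is true (it satisfies $d_{i-1}=2d_i-d_{i+1}$ with $d_0=0$, $d_{-1}=1$) and your cancellations check out, so the argument closes; it is simply a slightly heavier bookkeeping scheme than the paper's.
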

\begin{proof}
We argue by induction on $-i$.
Unwrapping \cite[Proposition~5.1]{ca4} in this case, we obtain $F_0=1+\hy_2$ and $F_{-1}=1+\hy_1(1+\hy_2)^2$, and for $i\le-2$ we obtain
\begin{equation}\label{F recur}
F_i=\frac{F_{i+1}^2+\hy_1^{-i-2}\hy_2^{-i-1}}{F_{i+2}}.
\end{equation}
(The exponents on $\hy_1$ and $\hy_2$ come from \eqref{ba cs}.)
We have established the base cases $i=0$ and $i=-1$, and for the rest of the proof, we take $i<-1$.
By induction and by dividing leading terms, assertion \eqref{leading} follows easily from \eqref{F recur}.

To prove the remaining assertions, we track the diagonal and super-diagonal terms through \eqref{F recur}.
We perform long division and, since we know that the result is a polynomial, we can ignore terms that are not diagonal or super-diagonal.
Indeed, because by induction we know that the highest order term in the denominator of \eqref{F recur} is super-diagonal, we need only track super-diagonal terms in the numerator.
By induction, the relevant terms in the numerator are 
\[\hy_1^{-2i-2}\hy_2^{-2i}+\hy_1^{-i-2}\hy_2^{-i-1}+\hy_1^{-i-1}\hy_2^{-i}\sum_{j=0}^{-i-1}2(j+1)\hy_1^j\hy_2^j\]
and the relevant terms in the denominator are $\hy_1^{-i-2}\hy_2^{-i-1}+\sum_{j=0}^{-i-2}(j+1)\hy_1^j\hy_2^j$.
Division yields super-diagonal and diagonal terms $\hy_1^{-i}\hy_2^{-i+1}+\sum_{j=0}^{-i}(j+1)\hy_1^j\hy_2^j$ as desired.
\end{proof}

Lemma~\ref{F coeffs}\eqref{key terms} implies that, for $i<0$, the restriction of $F_i$ to diagonal terms agrees with the formal power series $\sum_{k\ge0}(k+1)\hy_1^k\hy_2^k=(1-\hy_1\hy_2)^{-2}$ up to the term $\hy_1^{-i}\hy_2^{-i}$.
By Lemma~\ref{F coeffs}\eqref{others}, the only super-diagonal terms in $F_i$ have degree at least $-i$ in $\hy_1$, so the diagonal terms of $F_{i}^{2i+3}\cdot F_{i+1}^{-2i-1}$ agree, up to $\hy_1^{-i-1}\hy_2^{-i-1}$, with $(1-\hy_1\hy_2)^{-4}$.
Thus Proposition~\ref{diagonal terms} establishes the first assertion of Theorem~\ref{rk2 aff formula}.

\begin{remark}\label{UMN REU}
In \cite{Zhang}, there is a formal power series closely related to the function $f_{-1}$ on the limiting wall.
It appears as a limit of ``stable cluster variables'' (certain transformed $F$-polynomials). 
\end{remark}

We next prove the second assertion of Theorem~\ref{rk2 aff formula}.
In this case,  $a=-4$ and $b=1$, the slope of the limiting ray is $p/q=-2$, the positive root orthogonal to that ray is $\c_{-2}=\alpha_1+2\alpha_2$, and $Q_m$ is $-\frac32m+1$ for $m$ even or $-3m+2$ for $m$ odd.
We know that the limit in Proposition~\ref{p inf lim} exists, so we are free to approach it on the even values of $i$.
We will do this by replacing $i$ by $2i$ in the limit formula.
Thus Propositions~\ref{diagonal terms ba} and~\ref{p inf lim} give the following proposition.
\begin{prop}\label{diagonal terms asym}
If $a=-4$ and $b=1$, then for all $k\ge0$, the coefficient of $\hy_1^k\hy_2^{2k}$ in $f_{-2}$ equals the coefficient of $\hy_1^k\hy_2^{2k}$ in $\sqrt[3]{\lim_{i\to-\infty}F_{2i}^{6i+5}\cdot F_{2i+1}^{-3i-1}}$.
\end{prop}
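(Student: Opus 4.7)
The plan is to derive Proposition~\ref{diagonal terms asym} as a direct specialization of Propositions~\ref{diagonal terms ba} and~\ref{p inf lim} to the parameters $a=-4$, $b=1$. All the numerical data needed is already recorded in the paragraph immediately preceding the statement: the limiting slope is $p/q = -2$ in reduced form with $p=2$ and $q=-1$, so $p-q=3$; the positive root orthogonal to the limiting ray is $\c_{-2} = \alpha_1 + 2\alpha_2$; and $Q_m = -\tfrac{3}{2}m+1$ for $m$ even, $Q_m = -3m+2$ for $m$ odd.

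First, I would specialize Proposition~\ref{diagonal terms ba}. Since $k\c_{p/q}$ contributes the monomial $\hy^{k\c_{p/q}} = \hy_1^k \hy_2^{2k}$ and $\tfrac{1}{p-q} = \tfrac{1}{3}$, the coefficient of $\hy_1^k \hy_2^{2k}$ in $f_{-2}$ equals the coefficient of the same monomial in $\bigl(\frac{x_1 x_2}{\p_{-\infty}(\p_-(x_1 x_2))}\bigr)^{1/3}$. In the affine case $ab=-4$, the convention established in Section~\ref{rk2 inf sec} makes $\gamma_-$ empty and $\p_-$ the identity, so the denominator simplifies to $\p_{-\infty}(x_1 x_2)$.

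Next, I would invoke Proposition~\ref{p inf lim} to rewrite $\frac{x_1 x_2}{\p_{-\infty}(x_1 x_2)}$ as $\lim_{i \to -\infty} F_i^{Q_{-i-1}} F_{i+1}^{-Q_{-i}}$. Because this limit is known to exist, it can be evaluated along any subsequence tending to $-\infty$; replacing $i$ by $2i$ is especially convenient because $-2i-1$ is then always odd and $-2i$ always even, which selects a single branch of the recursion~\eqref{Qm recur} in each exponent. Substituting the closed forms for $Q_m$ gives $Q_{-2i-1} = 6i+5$ and $-Q_{-2i} = -3i-1$, so the limit becomes $\lim_{i\to-\infty} F_{2i}^{6i+5} F_{2i+1}^{-3i-1}$, and taking cube roots yields the formula in the proposition.

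The proof is essentially bookkeeping rather than genuinely new input: the two nontrivial ingredients have already been proved in full generality. The only points that require any care are confirming that $\p_-$ acts as the identity in the affine case (an immediate consequence of the stated convention) and correctly evaluating the parity-dependent closed form of $Q_m$ at the negative arguments $-2i-1$ and $-2i$. I expect this last substitution to be the main (still mild) obstacle, mostly because one must be careful about sign conventions on the exponents when taking the reciprocal into the limit.
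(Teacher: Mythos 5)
Your proposal is correct and matches the paper's argument exactly: the paper likewise obtains this proposition by specializing Propositions~\ref{diagonal terms ba} and~\ref{p inf lim} to $a=-4$, $b=1$ (so $p/q=-2$, $p-q=3$, $\c_{-2}=\alpha_1+2\alpha_2$, and $\p_-$ is the identity by the affine convention), and by passing to the even subsequence $i\mapsto 2i$ to evaluate $Q_{-2i-1}=6i+5$ and $-Q_{-2i}=-3i-1$. Your exponent bookkeeping is accurate.
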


Again, evaluating the limit will require determining some relevant coefficients of the $F$-polynomials, but this time we must separate the even and odd indices.
We will also \emph{re-use} the words diagonal and super-diagonal to deal with this case.
We call a term \newword{diagonal} if it is a constant times $\hy_1^k\hy_2^{2k}$ and \newword{super-diagonal} if it is a constant times $\hy_1^k\hy_2^\ell$ for $\ell>2k$.
Furthermore, a term is \newword{super-super-diagonal} if it is a constant times $\hy_1^k\hy_2^\ell$ for $\ell>2k+1$.

\begin{lemma}\label{F coeffs asym}
Suppose $a=-4$ and $b=1$.
The even-indexed $F$-polynomials $F_{2i}$ for $i\le0$ have the following properties:
\begin{enumerate}[\rm(i)]
\item \label{leading asym}
As a polynomial in $\hy_1$ (with coefficients polynomials in $\hy_2$), the leading term is $\hy_1^{-i}(1+\hy_2)^{-2i+1}$.
\item \label{super asym}
The only super-diagonal term is $\hy_1^{-i}\hy_2^{-2i+1}$.
\item \label{key terms asym}
The diagonal terms are $\sum_{j=0}^{-i}(2j+1)\hy_1^j\hy_2^{2j}$.
\end{enumerate}
The odd-indexed $F$-polynomials $F_{2i+1}$ for $i\le-1$ have the following properties:
\begin{enumerate}[\rm(i)] \setcounter{enumi}{3}
\item \label{leading asym odd}
As a polynomial in $\hy_1$ (with coefficients polynomials in $\hy_2$), the leading term is $\hy_1^{-2i-1}(1+\hy_2)^{-4i}$.
\item \label{super asym odd}
The only super-diagonal terms are $\hy_1^{-2i-1}\hy_2^{-4i}+\sum_{j=1}^{-i}4j\hy_1^{-i+j-1}\hy_2^{-2i+2j-1}$.
\item \label{key terms asym odd}
The diagonal terms agree with $\bigl(\sum_{j\ge0}(2j+1)\hy_1^j\hy_2^{2j}\bigr)^2$ up to $(-2i+1)\hy_1^{-i}\hy_2^{-2i}$.
\end{enumerate}
\end{lemma}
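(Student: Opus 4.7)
The plan is to run a single induction on $-i$ that handles both parities simultaneously, mirroring the proof of Lemma~\ref{F coeffs} but tracking six claims through a recursion whose exponent on $F_{i-1}$ alternates between $-a=4$ and $b=1$. As a first step, I would extract from \cite[Proposition~5.1]{ca4} the two $F$-polynomial recursions
\[
F_{i-2}\,F_i \;=\; F_{i-1}^{\,4} + m_{i-1}\qquad\text{and}\qquad F_{i-2}\,F_i \;=\; F_{i-1} + m_{i-1},
\]
applied according to the parity of $i-1$, where $m_{i-1}$ is the $\hy$-monomial associated to $\c_{i-1}$ via \eqref{ba cs} exactly as in the derivation of \eqref{F recur}. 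The base cases $F_0=1+\hy_2$, $F_{-1}=1+\hy_1(1+\hy_2)^4$, and a direct computation of $F_{-2}$ from the first recursion are each checked against (i)--(vi).

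The inductive step solves the appropriate recursion for $F_i$ and performs long division. Two observations make the bookkeeping tractable. First, since the quotient is known to be a polynomial, any term whose $\hy_2$-excess is too large to contribute to $F_i$ may be dropped; in particular, since by induction the highest-$\hy_2$ term of the denominator $F_{i+2}$ is super-diagonal, all super-super-diagonal terms in the numerator can be discarded without affecting the diagonal or super-diagonal terms of $F_i$. Second, the leading-term claims (i) and (iv) follow by dividing leading terms, exactly as in Lemma~\ref{F coeffs}.

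For the odd-index claim (v), the extra super-diagonal terms $\sum_{j=1}^{-i} 4j\,\hy_1^{-i+j-1}\hy_2^{-2i+2j-1}$ arise as cross-products between the super-diagonal leading term $\hy_1^{-i}\hy_2^{-2i+1}$ of one even factor and the diagonal coefficients $(2j+1)\hy_1^j\hy_2^{2j}$ of an even neighbor, the coefficient $4$ coming from the fourth-power exponent in the even-to-odd step. Claim (vi) is the statement that the odd diagonal truncations agree with those of $\bigl(\sum_{j\ge 0}(2j+1)\hy_1^j\hy_2^{2j}\bigr)^2$ up to $(-2i+1)\hy_1^{-i}\hy_2^{-2i}$; squaring appears naturally because $F_{2i+1}$ is sandwiched, through the recursion with exponent $4$, between two even $F$-polynomials whose diagonal truncations both approach $\sum_{j\ge 0}(2j+1)\hy_1^j\hy_2^{2j}$.

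The main obstacle will be the bookkeeping of the fourth power in the even-to-odd step: four copies of a truncated diagonal series must be combined modulo super-super-diagonal terms, and the cross-terms between the super-diagonal leading term and the interior diagonal terms must yield exactly the predicted linear-in-$j$ coefficient $4j$. Once (i)--(vi) are verified for $(F_{2i}, F_{2i+1})$ up to the stated orders, the odd-to-even recursion (with exponent $1$) routinely propagates them to $F_{2i-2}$, completing the induction.
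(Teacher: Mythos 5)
Your overall architecture is the same as the paper's: induction on the index, the two parity\hyphen dependent recursions from \cite[Proposition~5.1]{ca4} (with exponents $4$ and $1$ on the middle $F$-polynomial), long division, and bookkeeping of terms by their ``$\hy_2$-excess'' (the quantity $\ell-2k$ for $\hy_1^k\hy_2^\ell$). The leading-term claims, the origin of the $4j$ coefficients from the fourth power, and the heuristic for why the odd diagonal is a square are all in line with the paper's computation.

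However, your stated pruning rule is backwards and, followed literally, would make the induction fail. You claim that ``all super-super-diagonal terms in the numerator can be discarded without affecting the diagonal or super-diagonal terms of $F_i$.'' The opposite is true. In the even-to-odd step the denominator is $F_{2i+3}$, whose highest term $\hy_1^{-2i-3}\hy_2^{-4i-4}$ has excess $2$; since long division matches a quotient term of excess $e$ against a numerator term of excess $e+2$, the diagonal and super-diagonal terms of $F_{2i+1}$ are produced precisely by the super-super-diagonal terms (excess $\ge 2$) of $(F_{2i+2})^4+\hy_1^{-2i-3}\hy_2^{-4i-4}$ --- these are exactly the terms the paper isolates and you propose to throw away. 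Even in the odd-to-even step, where the denominator's highest term has excess $1$, the excess-$2$ term $\hy_1^{-2i-1}\hy_2^{-4i}$ of the numerator $F_{2i+1}+\hy_1^{-i-1}\hy_2^{-2i-1}$ is needed: it is what generates the unique super-diagonal term $\hy_1^{-i}\hy_2^{-2i+1}$ of $F_{2i}$ asserted in part (ii). The correct rule is: retain the super-diagonal terms of the numerator in the odd-to-even step and the super-super-diagonal terms of the numerator in the even-to-odd step (together with the diagonal and super-diagonal terms of the denominator), and discard everything of lower excess. With that correction, and with the explicit verification that the resulting odd diagonal truncation matches $\bigl(\frac{1+\hy_1\hy_2^2}{(1-\hy_1\hy_2^2)^2}\bigr)^2$ to the stated order, your sketch becomes the paper's proof.
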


\begin{proof}
We argue by induction on $m\le0$ that $F_m$ has the properties described in the proposition.
In this case \cite[Proposition~5.1]{ca4} says that $F_0=1+\hy_2$ and ${F_{-1}=1+\hy_1(1+\hy_2)^4}$.
Also, for $i\le -1$
\begin{equation}\label{F recur asym even}
F_{2i}=\frac{F_{2i+1}+\hy_1^{-i-1}\hy_2^{-2i-1}}{F_{2i+2}}.
\end{equation}
and for $i\le-2$,
\begin{equation}\label{F recur asym odd}
F_{2i+1}=\frac{(F_{2i+2})^4+\hy_1^{-2i-3}\hy_2^{-4i-4}}{F_{2i+3}}.
\end{equation}
(The monomials $\hy_1^{-i-1}\hy_2^{-2i-1}$ and $\hy_1^{-2i-3}\hy_2^{-4i-4}$ are $\hy^{\c_{m-1}}$ for $m=2i$ or $2i+1$ and $\c_{m-1}$ as in \eqref{ba cs}.)
We have established the base cases $m=0$ and $m=-1$, and for the rest of the proof, we take $m<-1$.
Assertions \eqref{leading asym} and \eqref{leading asym odd} follow easily by induction.

To prove the remaining assertions, we track diagonal and super-diagonal terms through the right sides of \eqref{F recur asym even} and \eqref{F recur asym odd}.
Since we know that the right side is a polynomial, we can simply perform polynomial long division and ignore all terms that are not diagonal or super-diagonal.
Because we know by induction the highest-order terms of the denominator of \eqref{F recur asym even}, we need only concern ourselves with super-diagonal terms in the numerator.
Similarly, in the numerator of \eqref{F recur asym odd}, we need only concern ourselves with super-super-diagonal terms.

For $m=2i$, by induction the relevant terms in the numerator in \eqref{F recur asym even} are 
\begin{equation}\label{even numer}
\hy_1^{-2i-1}\hy_2^{-4i}+\sum_{j=1}^{-i}4j\hy_1^{-i+j-1}\hy_2^{-2i+2j-1}+\hy_1^{-i-1}\hy_2^{-2i-1}
\end{equation}
and the relevant terms in the denominator are
\begin{equation}\label{even denom}
\hy_1^{-i-1}\hy_2^{-2i-1}+\sum_{j=0}^{-i-1}(2j+1)\hy_1^j\hy_2^{2j}.
\end{equation}
The relevant terms of the quotient are $\hy_1^{-i}\hy_2^{-2i+1}+\sum_{j=0}^{-i}(2j+1)\hy_1^j\hy_2^{2j}$, as desired.

For $m=2i+1$, by induction, the relevant terms in the numerator are the super-super-diagonal terms in $\bigl(\hy_1^{-i-1}\hy_2^{-2i-1}+\sum_{j=0}^{-i-1}(2j+1)\hy_1^j\hy_2^{2j}\bigr)^4+\hy_1^{-2i-3}\hy_2^{-4i-4}$, namely
\begin{multline}\label{odd numer}
\hy_1^{-4i-4}\hy_2^{-8i-4}+4\hy_1^{-3i-3}\hy_2^{-6i-3}\sum_{j=0}^{-i-1}(2j+1)\hy_1^j\hy_2^{2j}\\
+6\hy_1^{-2i-2}\hy_2^{-4i-2}\biggl(\sum_{j=0}^{-i-1}(2j+1)\hy_1^j\hy_2^{2j}\biggr)^2
+\hy_1^{-2i-3}\hy_2^{-4i-4}.
\end{multline}
The relevant terms in the denominator are
\begin{multline}\label{odd denom}
\hy_1^{-2i-3}\hy_2^{-4i-4}+\hy_1^{-i-2}\hy_2^{-2i-3}\sum_{j=0}^{-i-1}4j\hy_1^j\hy_2^{2j}\\
+\biggl(\sum_{j=0}^{-i-1}(2j+1)\hy_1^j\hy_2^{2j}\biggr)^2+\hy_1^{-i}\hy_2^{-2i}\sum_{j=0}^{-i-3}b_j\hy_1^j\hy_2^{2j},
\end{multline}
for some coefficients $b_j$ that (it will turn out) are not important.
The super-diagonal terms in the quotient are $\hy_1^{-2i-1}\hy_2^{-4i}+4\hy_1^{-i-1}\hy_2^{-2i-1}\sum_{j=0}^{-i}j\hy_1^j\hy_2^{2j}$ and the diagonal terms are
\begin{multline}\label{X}
1+(6\hy_1\hy_2^2-\hy_1^2\hy_2^4)\biggl(\sum_{j=0}^{-i-1}(2j+1)\hy_1^j\hy_2^{2j}\biggr)^2\\-16\biggl(\sum_{j=0}^{-i-1}j\hy_1^j\hy_2^{2j}\biggr)\biggl(\sum_{j=0}^{-i}j\hy_1^j\hy_2^{2j}\biggr)+\hy_1^{-i+2}\hy_2^{-i+4}\biggl(\sum_{j=0}^{-i-1}b_j\hy_1^j\hy_2^{2j}\biggr).
\end{multline}
But \eqref{X} agrees, up to terms involving $\hy_1^{-i}\hy_2^{-2i}$ with 
\begin{equation}\label{X FPS}
1+(6\hy_1\hy_2^2-\hy_1^2\hy_2^4)\biggl(\frac{1+\hy_1\hy_2^2}{(1-\hy_1\hy_2^2)^2}\biggr)^2\\-16\biggl(\frac{\hy_1\hy_2^2}{(1-\hy_1\hy_2^2)^2}\biggr)=\biggl(\frac{1+\hy_1\hy_2^2}{(1-\hy_1\hy_2^2)^2}\biggr)^2,
\end{equation}
which equals $=\bigl(\sum_{j\ge0}(2j+1)\hy_1^j\hy_2^{2j}\bigr)^2$ as desired.
\end{proof}

Lemma~\ref{F coeffs asym} says in particular that for $i\le-1$ the diagonal terms of $F_{2i}$ agree, up to $\hy_1^{-i}\hy_2^{-2i}$, with $\frac{1+\hy_1\hy_2^2}{(1-\hy_1\hy_2^2)^2}$ and the diagonal terms of $F_{2i+1}$ agree, up to $\hy_1^{-i}\hy_2^{-2i}$, with $\Bigl(\frac{1+\hy_1\hy_2^2}{(1-\hy_1\hy_2^2)^2}\Bigr)^2$.
Furthermore, the only super-diagonal terms in either polynomial have degree at least $-i$ in $\hy_1$, so the diagonal terms of $F_{2i}^{6i+5}\cdot F_{2i+1}^{-3i-1}$ agree, up to $\hy_1^{-i-1}\hy_2^{-2i-2}$, with $\Bigl(\frac{1+\hy_1\hy_2^2}{(1-\hy_1\hy_2^2)^2}\Bigr)^{6i+5}\cdot\Bigl(\frac{1+\hy_1\hy_2^2}{(1-\hy_1\hy_2^2)^2}\Bigr)^{-6i-2}$, which equals $\Bigl(\frac{1+\hy_1\hy_2^2}{(1-\hy_1\hy_2^2)^2}\Bigr)^3$.
Now Proposition~\ref{diagonal terms asym} completes the proof of Theorem~\ref{rk2 aff formula}.


\subsection{Path-ordered products and Narayana numbers}\label{Nar sec} 
As an easy consequence of Corollary~\ref{clus mon F}, for each ray of $\gFan(B)$, the $F$-polynomial of the corresponding cluster variable is $x^{-\lambda}\cdot\p_{\gamma,\Scat^T(B)}(x^\lambda)$, where $\lambda$ is the corresponding $\g$-vector, ${\gamma:(0,1]\to V^*}$ is any generic path contained in the union of the cones of $\gFan(B)$ such that $\lim_{t\to0^+}\gamma(t)$ is in the relative interior of the ray and $\gamma(1)$ is in the positive cone.
(We interpret the path-ordered product $\p_{\gamma,\Scat^T(B)}$ as a limit as in Section~\ref{rk2 inf sec}.)


For $B$ of rank $2$ and of affine type, we can write down the same formula for $\lambda$ in the limiting ray.
We obtain not a polynomial, but a formal power series in $\k[[\hy]]$.
In this section, we compute this series in the symmetric rank-$2$ affine case.

We work in the notation of Section~\ref{rk2 inf sec}, with $a=-2$ and $b=2$.
We take $\lambda=-\rho_1+\rho_2$, so that $x^\lambda=x_1^{-1}x_2$.
Since the path-ordered products $\p_+$ and $\p_-$ are the identity in this case, consistency says that $\p_{-\infty}\circ\p_{p/q}^{-1}\circ\p_\infty^{-1}$ is the identity map.
Since also $\p_{p/q}x_1^{-1}x_2=x_1^{-1}x_2$, we have $\p_{-\infty}(x_1^{-1}x_2)=\p_\infty(x_1^{-1}x_2)$.
We define $\N(\hy_1,\hy_2)\in\k[[\hy]]$ to be the formal power series such that $\p_{-\infty}(x_1^{-1}x_2)=\p_\infty(x_1^{-1}x_2)=x_1^{-1}x_2\N(\hy_1,\hy_2)$.
The symbol ``$\N$\hspace{0.7pt}'' is to suggest Narayana:  We now prove that the coefficients of $\N(\hy_1,\hy_2)$ are given by Narayana numbers with an alternating sign.
For $i,j\ge 0$, the \newword{Narayana number} is 
\begin{equation}\label{Nar def}
\Nar(i,j)=\begin{cases}
1&\text{if }i=j=0,\\
0&\text{otherwise if }ij=0,\text{ or}\\
\frac1i\binom ij\binom i{j-1}&\text{if }i\ge1\text{ and }j\ge1.
\end{cases}
\end{equation}

We will prove the following theorem.

\begin{theorem}\label{Nar thm}
\begin{equation}\label{Nar thm eq}
\N(\hy_1,\hy_2)=\lim_{i\to\infty}\frac{F_{i+1}}{F_i}=\lim_{i\to-\infty}\frac{F_{i-1}}{F_i}=1+\hy_1\sum_{i,j\ge0}(-1)^{i+j}\Nar(i,j)\hy_1^i\hy_2^j.
\end{equation}
\end{theorem}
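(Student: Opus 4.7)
The plan is to express $\N$ as a limit of ratios of consecutive $F$-polynomials, extract a quadratic functional equation for that limit, and then identify the unique solution with the Narayana generating function.

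I would start by specializing~\eqref{ba gs} to $a=-2$, $b=2$ (where each $P_m$ grows linearly in $m$, since $ab=-4$), which immediately gives the $\g$-vector identities $\g_{i+1}-\g_i = \lambda$ for $i\ge 1$ and $\g_i - \g_{i+1} = \lambda$ for $i\le -1$ with $\lambda = -\rho_1 + \rho_2$; these can equivalently be read off Lemma~\ref{deal with gs}, since in affine rank $2$ the recursion there gives constant consecutive differences of $\g$-vectors. Since $\p_{\gamma_i}$ is a ring homomorphism and $x_i = x^{\g_i}F_i$ by Corollary~\ref{clus mon F}, these identities give
\[
\p_{\gamma_i}(x^\lambda) = \frac{x_{i+1}}{x_i} = x^\lambda\cdot\frac{F_{i+1}}{F_i}\quad(i\ge 1), \qquad \p_{\gamma_i}(x^\lambda) = \frac{x_i}{x_{i+1}} = x^\lambda\cdot\frac{F_i}{F_{i+1}}\quad(i\le -1),
\]
and passing to the limit (comparing with $\p_\infty(x^\lambda) = \p_{-\infty}(x^\lambda) = x^\lambda \N$) proves the first two equalities of~\eqref{Nar thm eq}.

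Next, I would derive the quadratic $\N^2 - (1+\hy_1+\hy_1\hy_2)\N + \hy_1\hy_2 = 0$. The skew-symmetric cluster algebra $\A_\bullet(B)$ admits an isomorphism onto $\A_\bullet(-B)$ obtained by swapping indices $1$ and $2$, sending the cluster variable $x_i$ to $x_{3-i}$; on the scattering-diagram side this is the antipodal symmetry of Proposition~\ref{scat antip}. Tracking how this isomorphism transforms the ratios $F_{i+1}/F_i$, with care that it does \emph{not} simply interchange $\hy_1$ and $\hy_2$ but instead rewrites them via a monomial substitution in the $x$-variables, yields a second expression for $\N$. Combining these two expressions with the $F$-polynomial recursion implicit in~\eqref{F recur} (of the form $F_{i-1}F_{i+1} = F_i^2 + \hy^{\c_i}$) and the limiting-wall function $(1-\hy_1\hy_2)^{-2}$ supplied by Theorem~\ref{rk2 aff formula} produces the quadratic.

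To finish, I would identify $\N$ with the Narayana generating function. The bivariate series $f(u,v) = \sum_{n,k\ge 1}\Nar(n,k)u^n v^k$ satisfies $f = u(v+f)(1+f)$ by a first-return decomposition of Dyck paths refined by peak count; equivalently, $G = 1+f$ satisfies $uG^2 + (uv-u-1)G + 1 = 0$. Writing $\N = 1 + \hy_1\,G(-\hy_1,-\hy_2)$ converts the quadratic above directly into this Narayana identity, and the constant term $\N|_{\hy=0}=1$ pins down the correct root. The principal obstacle is step two: the $F$-polynomial recursion alone collapses to the triviality $\N\cdot(1/\N)=1$ in the limit, so extracting a nontrivial functional equation requires feeding in the swap symmetry in a form that accounts correctly for the monomial substitution on the $\hy$-coordinates.
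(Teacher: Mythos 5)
Your first step (the two limit equalities) is correct and is essentially the paper's argument: $\g_{i+1}-\g_i=-\rho_1+\rho_2$ for $i\ge1$ and $\g_i-\g_{i+1}=-\rho_1+\rho_2$ for small $i$ give $\p_{\gamma_i}(x_1^{-1}x_2)=x_1^{-1}x_2\,F_{i+1}F_i^{-1}$, resp.\ $x_1^{-1}x_2\,F_iF_{i+1}^{-1}$, and the limits are $\N(\hy_1,\hy_2)$. Your final step is also sound as far as it goes: the quadratic $\N^2-(1+\hy_1+\hy_1\hy_2)\N+\hy_1\hy_2=0$ is a true statement (it is equivalent to Corollary~\ref{Nar cor}), the first-return decomposition does give $f=u(v+f)(1+f)$ for the Narayana generating function, and the substitution $\N=1+\hy_1 G(-\hy_1,-\hy_2)$ together with $\N|_{\hy=0}=1$ would finish the identification.

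The gap is exactly where you flag it: you never derive the quadratic, and the ingredients you name do not visibly produce it. As you yourself observe, the recursion $F_iF_{i+2}=F_{i+1}^2+\hy_1^{-i-2}\hy_2^{-i-1}$ degenerates to $\N\cdot\N^{-1}=1$ in the limit. The swap symmetry (relabelling $\tilde x_i=x_{1-i}$), executed carefully with the substitution $\hy_1=x_2^{-2}$, $\hy_2=x_1^2$ and the exchange relations, is precisely what the paper does in Proposition~\ref{Nar fun} --- and what comes out is not a quadratic in $\N$ at a fixed argument but the \emph{linear} relation $\N\bigl(\hy_1(1+\hy_2)^{-2},\hy_2\bigr)(1+\hy_2)=\N\bigl(\hy_2(1+\hy_1)^{-2},\hy_1\bigr)(1+\hy_1)$, relating $\N$ at two rescaled arguments. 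Solving that equation is the bulk of the paper's proof: one extracts a coefficient recursion, checks using Proposition~\ref{simple Nar fact} that it determines all coefficients $n_{ij}$, and then verifies that the signed Narayana numbers satisfy the recursion via a $_3F_2$ identity and Saalsch\"{u}tz' Theorem. The limiting-wall function $(1-\hy_1\hy_2)^{-2}$ of Theorem~\ref{rk2 aff formula} cannot substitute for this, since $\br{-\rho_1+\rho_2,\alpha_1\ck+\alpha_2\ck}=0$ means $\p_{p/q}$ fixes $x_1^{-1}x_2$, so the wall function never acts on the monomial you are tracking. A route that genuinely yields your quadratic does exist --- the theta-function comparison sketched in Remark~\ref{anon} gives $\N+\hy_1\hy_2/\N=1+\hy_1+\hy_1\hy_2$ --- but it rests on comparing $\thet_{-\rho_1+\rho_2}$ computed from two basepoints (plus a limiting construction), not on the ingredients you list. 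As written, your step two is an assertion rather than a proof, and it is the heart of the theorem.
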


Using the known generating function for the Narayana numbers---see for example \cite[Chapter 2]{PetersenBook}, where the indexing conventions are slightly different---we have the following immediate corollary of Theorem~\ref{Nar thm}.

\begin{corollary}\label{Nar cor}
\begin{equation}\label{Nar cor eq}
\N(\hy_1,\hy_2)=\frac{1+\hy_1+\hy_1\hy_2+\sqrt{(1+\hy_1+\hy_1\hy_2)^2-4\hy_1\hy_2}}{2}.
\end{equation}
\end{corollary}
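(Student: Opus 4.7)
The plan is to substitute the classical bivariate generating function for the Narayana numbers into the series identity of Theorem~\ref{Nar thm} and simplify. Let $G(u,v)=\sum_{n,k\ge 1}\Nar(n,k)\,u^n v^k$. Refining the first-return decomposition of Dyck paths to track simultaneously the semilength (exponent of $u$) and the number of peaks (exponent of $v$) yields the functional equation
\[
G = u(v+G)(1+G),\qquad\text{i.e.,}\qquad uG^2-(1-u-uv)G+uv=0,
\]
whose power-series solution (pinned down by $G=uv+O(\deg\ge 3)$) is
\[
G(u,v)=\frac{1-u-uv-\sqrt{(1-u-uv)^2-4u^2 v}}{2u}.
\]
This is equivalent, after a simple reindexing, to the generating function reference in \cite{PetersenBook}, so I would cite it rather than redo the derivation.

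Using $\Nar(0,0)=1$ and $\Nar(i,0)=\Nar(0,j)=0$ for $i,j\ge 1$, the right-hand side of \eqref{Nar thm eq} rewrites as
\[
\N(\hy_1,\hy_2)=1+\hy_1\bigl(1+G(-\hy_1,-\hy_2)\bigr).
\]
I would then substitute $u=-\hy_1$, $v=-\hy_2$ into the closed form for $G$ and collect terms. The one thing that genuinely wants checking is the behaviour of the discriminant under this sign-reversal substitution: a quick expansion using $(a+b)^2-(a-b)^2=4ab$ shows
\[
\bigl((1-u-uv)^2-4u^2v\bigr)\Big|_{u=-\hy_1,\,v=-\hy_2}=(1+\hy_1-\hy_1\hy_2)^2+4\hy_1^2\hy_2=(1+\hy_1+\hy_1\hy_2)^2-4\hy_1\hy_2,
\]
the last equality being the identity $4(1+\hy_1)\hy_1\hy_2-4\hy_1\hy_2-4\hy_1^2\hy_2=0$. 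The correct branch of the square root is forced throughout by the requirement that $G$, and hence $\N$, be formal power series with the prescribed constant terms.

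After the substitution, the factor $2u$ in the denominator becomes $-2\hy_1$, which cancels the outer $\hy_1$ from the expression for $\N$. Combining the resulting rational part with the surd over the common denominator~$2$ collapses to the right-hand side of~\eqref{Nar cor eq}. There is no real obstacle here beyond careful sign bookkeeping in the discriminant; once the Narayana generating function is in hand and the $(-\hy_1,-\hy_2)$-substitution is carried out, the corollary drops out algebraically.
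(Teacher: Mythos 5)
Your proposal is correct and is essentially the paper's own argument: the corollary is deduced from Theorem~\ref{Nar thm} by plugging in the known closed form of the Narayana generating function (cited from \cite{PetersenBook}) and doing the sign-substitution algebra, exactly as you carry out. Your discriminant identity $(1+\hy_1-\hy_1\hy_2)^2+4\hy_1^2\hy_2=(1+\hy_1+\hy_1\hy_2)^2-4\hy_1\hy_2$ and the final cancellation both check out.
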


We now proceed to prove Theorem~\ref{Nar thm}.
As before (by Theorem~\ref{clus mon pop root} and Theorem~\ref{clus mon thm}), $x_i=\p_{\gamma_i}(x^{\g_i})$ and $x_{i+1}=\p_{\gamma_i}(x^{\g_{i+1}})$ for all $i\in\integers$.
Since $a=-2$ and $b=2$, by \eqref{ba gs} we have $\g_{i+1}-\g_i=-\rho_1+\rho_2$ for $i\ge1$ and $\g_i-\g_{i+1}=-\rho_1+\rho_2$ for $i\le-2$.
We see that 
\begin{equation}\label{pi F's}
\p_{\gamma_i}(x_1^{-1}x_2)=\begin{cases}
x_i^{-1}\cdot x_{i+1}=\,x_1^{-1}x_2F_i^{-1}\cdot F_{i+1}&\text{if }i\ge1,\text{ or}\\
x_{1+1}^{-1}\cdot x_i\hspace{6.25pt}=\,x_1^{-1}x_2F_{i+1}^{-1}\cdot F_i&\text{if }i\le-2,
\end{cases}
\end{equation}
where $F_i$ is the $F$-polynomial of the cluster variable $x_i$.
The first two equalities of \eqref{Nar thm eq} follow.

The remaining assertion of Theorem~\ref{Nar thm} is illustrated by the following two-dimensional representation of $\N(\hy_1,\hy_2)$.
{\small\[\begin{array}{clrrrrrr}
1\\
+\hy_1\\
&+\hy_1^2\hy_2\\
&-\hy_1^3\hy_2&+\hy_1^3\hy_2^2\\\
&+\hy_1^4\hy_2&-3\hy_1^4\hy_2^2&+\hy_1^4\hy_2^3\\\
&-\hy_1^5\hy_2&+6\hy_1^5\hy_2^2&-6\hy_1^5\hy_2^3&+\hy_1^5\hy_2^4\\\
&+\hy_1^6\hy_2&-10\hy_1^6\hy_2^2&+20\hy_1^6\hy_2^3&-10\hy_1^6\hy_2^4&+\hy_1^6\hy_2^5\\\
&-\hy_1^7\hy_2&+15\hy_1^7\hy_2^2&-50\hy_1^7\hy_2^3&+50\hy_1^7\hy_2^4&-15\hy_1^7\hy_2^5&+\hy_1^7\hy_2^6\\\
&+\hy_1^8\hy_2&-21\hy_1^8\hy_2^2&+105\hy_1^8\hy_2^3&-175\hy_1^8\hy_2^4&+105\hy_1^8\hy_2^5&-21\hy_1^8\hy_2^6&+\hy_1^8\hy_2^7
\\
&+\cdots
\end{array}
\]}

To prove this remaining assertion, we begin with the following two propositions, which amount to boundary conditions and a functional equation on $\N(\hy_1,\hy_2)$.
The first of the propositions is analogous to an observation already made in the proof of Proposition~\ref{diagonal terms ba}.
\begin{prop}\label{simple Nar fact}
$\N(\hy_1,\hy_2)$ is $1$ plus terms involving $\hy_1^i\hy_2^j$ with $i>j$.
\end{prop}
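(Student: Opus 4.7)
The plan is to compute $\N(\hy_1,\hy_2)$ from the $\p_\infty$ side, rather than $\p_{-\infty}$, because on that side the desired inequality is manifest. By \eqref{ba cs} with $a=-2$ and $b=2$, every wall crossed by the paths $\gamma_i$ as $i \to \infty$ has primitive normal $\c_j = (j-1)\alpha_1 + (j-2)\alpha_2$ for some $j \ge 2$; by Theorem~\ref{clus easy root} it carries the function $1 + \hy^{\c_j} = 1 + \hy_1^{j-1}\hy_2^{j-2}$. Geometrically these walls all lie strictly below the limiting wall, whose primitive normal $\alpha_1+\alpha_2$ has equal coefficients, so each contributing exponent has its $\hy_1$-power strictly greater than its $\hy_2$-power. (This is the analogue of the slope observation used in the proof of Proposition~\ref{diagonal terms ba}.)

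Let $\N_\epsilon \subseteq \k[[\hy]]$ denote the additive subspace of power series all of whose terms $\hy_1^a\hy_2^b$ satisfy $a > b$. Since the componentwise strict inequalities $a_i > b_i$ for $i = 1, 2$ add to $a_1 + a_2 > b_1 + b_2$, the set $\N_\epsilon$ is closed under multiplication, so $1 + \N_\epsilon$ is a multiplicative subgroup of $1 + \m$ (closed under the inversion $(1+s)^{-1} = 1 - s + s^2 - \cdots$, which converges $\m$-adically). Moreover, multiplication by any monomial $\hy^\phi$ with $\phi$ having $\alpha_1$-coefficient strictly greater than $\alpha_2$-coefficient sends $\N_\epsilon$ into $\N_\epsilon$ and $1+\N_\epsilon$ into $\N_\epsilon$; this applies to each $\hy^{\c_j}$ with $j \ge 2$. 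The claim $\N - 1 \in \N_\epsilon$ is exactly $\p_\infty(x_1^{-1}x_2) \in x_1^{-1}x_2(1+\N_\epsilon)$.

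I would then prove by induction on $i \ge 1$ that $\p_{\gamma_i}(x_1^{-1}x_2) \in x_1^{-1}x_2 \cdot (1+\N_\epsilon)$ and $\p_{\gamma_i}(\hy_k) \in \hy_k \cdot (1+\N_\epsilon)$ for $k = 1, 2$. The base case is the constant path $\gamma_1$. For the inductive step, write $\p_{\gamma_{i+1}} = \p_{\gamma_i} \circ \p_{\d_{i+1}}$, where $\d_{i+1}$ is the wall with primitive normal $\c_{i+1}$. By \eqref{theta def x}--\eqref{theta def hat y} and the computation $\br{-\rho_1+\rho_2,\c_{i+1}\ck} = -1$, the wall-crossing $\p_{\d_{i+1}}$ multiplies $x_1^{-1}x_2$ by $(1+\hy^{\c_{i+1}})^{\pm 1}$ and multiplies each $\hy_k$ by an integer power of $(1+\hy^{\c_{i+1}})$, all lying in $1+\N_\epsilon$. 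Then $\p_{\gamma_i}$ preserves the shape $1+\N_\epsilon$ because the inductive hypothesis $\p_{\gamma_i}(\hy_k) \in \hy_k(1+\N_\epsilon)$ gives $\p_{\gamma_i}(\hy^{\c_{i+1}}) \in \hy^{\c_{i+1}}(1+\N_\epsilon) \subseteq \N_\epsilon$, and $(1+s)^k \in 1+\N_\epsilon$ for any $s \in \N_\epsilon$ and $k \in \ZZ$. Passing to the $\m$-adic limit as $i \to \infty$ (using the existence of $\p_\infty$ established in Section~\ref{rk2 inf sec}) yields $\p_\infty(x_1^{-1}x_2) \in x_1^{-1}x_2(1+\N_\epsilon)$, completing the proof.

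The main place to tread carefully is that $\N_\epsilon$ is \emph{not} an ideal of $\k[[\hy]]$ (for instance, $\hy_1 \in \N_\epsilon$ but $\hy_2 \cdot \hy_1 = \hy_1\hy_2 \notin \N_\epsilon$). One must verify that every substitution actually appearing in the induction is multiplication by a factor whose monomials individually satisfy the strict inequality, and this is exactly guaranteed by the choice to work with $\p_\infty$ instead of $\p_{-\infty}$: all wall-normals $\c_j$ arising in $\p_\infty$ lie in $\N_\epsilon$, whereas some walls contributing to $\p_{-\infty}$ (for instance $\c_{-2} = \alpha_1 + 2\alpha_2$) do not.
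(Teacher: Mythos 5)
Your proof is correct and takes essentially the same approach as the paper: the paper's own proof is the one-sentence observation that $\p_\infty(x_1^{-1}x_2)$ is built by repeatedly multiplying by integer powers of series in $\hy^\beta$ with $\beta=c_1\alpha_1+c_2\alpha_2$ satisfying $c_1>c_2\ge 0$. Your version simply formalizes this with the multiplicative group $1+\N_\epsilon$ and an explicit induction over wall crossings.
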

\begin{proof} 
$\p_\infty(x_1^{-1}x_2)$ is computed by starting with $x_1^{-1}x_2$ and repeatedly replacing a monomial by the same monomial times integer powers of power series in $\hy^\beta$ for $\beta\in Q^+$ of the form $\beta=c_1\alpha_1+c_2\alpha_2$ with $0<\frac{c_2}{c_1}<1$.
(See Figure~\ref{transscat2-2}.)
\end{proof}

\begin{prop}\label{Nar fun}
\begin{equation}\label{Nar fun eq}
\N\bigl(\hy_1(1+\hy_2)^{-2},\hy_2\bigr)\cdot(1+\hy_2)=\N\bigl(\hy_2(1+\hy_1)^{-2},\hy_1\bigr)\cdot(1+\hy_1).
\end{equation}
\end{prop}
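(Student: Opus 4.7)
The plan is to show that $G(\hy_1,\hy_2):=(1+\hy_2)\N(\hy_1(1+\hy_2)^{-2},\hy_2)$ is symmetric under $\hy_1\leftrightarrow\hy_2$, which is equivalent to the stated functional equation. The main tool will be the reflective symmetry $\tau$ of the scattering diagram $\Scat^T(B)$ across the limit ray, obtained by composing Proposition~\ref{scat antip} (relating $\Scat^T(B)$ and $\Scat^T(-B)$) with the identification of $-B$ and $B$ arising from swapping indices $1$ and $2$. As a ring automorphism of $\k(x_1,x_2,y_1,y_2)$, $\tau$ is given by $x_i\mapsto x_{3-i}^{-1}$ and $y_i\mapsto y_{3-i}$, and acts on $\hy$-variables by $\hy_1\leftrightarrow\hy_2$. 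Using the formulas \eqref{ba gs} and \eqref{ba cs}, one checks that $\tau$ permutes the walls of $\Scat^T(B)$ according to $\g_i\leftrightarrow\g_{1-i}$, so $\tau$ is a scattering-diagram automorphism; in particular $\tau$ fixes $\g_\infty$ and the monomial $x_1^{-1}x_2=x^{-\rho_1+\rho_2}$.

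First I would identify $G$ as a path-ordered product. The wall between $C_1$ and $C_0$ has primitive normal $\alpha_2$ and function $1+\hy_2$; direct computation using \eqref{theta def x}--\eqref{theta def hat y} applied to $\p_\infty(x_1^{-1}x_2)=x_1^{-1}x_2\N(\hy_1,\hy_2)$ gives
\[\p_{C_1\to C_0}\bigl(\p_\infty(x_1^{-1}x_2)\bigr)=x_1^{-1}x_2(1+\hy_2)\N(\hy_1(1+\hy_2)^{-2},\hy_2)=x_1^{-1}x_2\cdot G(\hy_1,\hy_2).\]

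Next I would apply $\tau$ to the Narayana identity. Since $\tau\circ\p_\infty=\p_{\tau\gamma_\infty}\circ\tau$, and $\tau\gamma_\infty$ is the orientation-reversed image of $\gamma_\infty$ going from $\g_\infty$ counterclockwise to $-(\rho_1+\rho_2)\in C_{-1}$, the path $\tau\gamma_\infty$ traverses the same cones as $\gamma_{-\infty}$ truncated to end in $C_{-1}$; call this truncated path $\gamma_{-\infty}^{C_{-1}}$. Using $\tau$-invariance of $x_1^{-1}x_2$ and $\tau(\N(\hy_1,\hy_2))=\N(\hy_2,\hy_1)$, this yields
\[\p_{\gamma_{-\infty}^{C_{-1}}}(x_1^{-1}x_2)=x_1^{-1}x_2\cdot\N(\hy_2,\hy_1).\]
The same quantity can be computed a second way via $\p_{\gamma_{-\infty}^{C_{-1}}}=\p_{C_0\to C_{-1}}\circ\p_{C_1\to C_0}\circ\p_{-\infty}$; applying this to $x_1^{-1}x_2$ and using the wall-crossing for the wall at $\g_0=-\rho_2$ (normal $\alpha_1$, function $1+\hy_1$) gives
\[\p_{\gamma_{-\infty}^{C_{-1}}}(x_1^{-1}x_2)=x_1^{-1}x_2(1+\hy_1)^{-1}G(\hy_1,\hy_2(1+\hy_1)^2).\]

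Equating the two expressions, cancelling $x_1^{-1}x_2$, and substituting $\hy_2\mapsto\hy_2(1+\hy_1)^{-2}$ yields
\[G(\hy_1,\hy_2)=(1+\hy_1)\N(\hy_2(1+\hy_1)^{-2},\hy_1)=G(\hy_2,\hy_1),\]
which is the functional equation. The main obstacle will be carefully justifying that $\tau$ really is an automorphism of $\Scat^T(B)$---equivalently, that the permutation $\g_i\leftrightarrow\g_{1-i}$ genuinely carries every wall to another wall with correctly transformed function---and verifying the identification of $\tau\gamma_\infty$ with $\gamma_{-\infty}^{C_{-1}}$ at the level of wall crossings. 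Once the $\tau$-symmetry is established, the rest is direct calculation.
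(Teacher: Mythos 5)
Your proposal is correct, and I verified the two wall-crossing computations: crossing the positive $\rho_1$-axis (normal $\alpha_2\ck$, function $1+\hy_2$) sends $x_1^{-1}x_2\mapsto x_1^{-1}x_2(1+\hy_2)$ and $\hy_1\mapsto\hy_1(1+\hy_2)^{-2}$, and crossing the negative $\rho_2$-axis sends $x_1^{-1}x_2\mapsto x_1^{-1}x_2(1+\hy_1)^{-1}$ and $\hy_2\mapsto\hy_2(1+\hy_1)^{2}$, so the bookkeeping that produces \eqref{Nar fun eq} is right. The route is genuinely different in implementation from the paper's, though both exploit the same underlying $\integers/2$ symmetry of the bi-infinite exchange pattern, namely $\g_i\leftrightarrow\g_{1-i}$. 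The paper realizes this symmetry inside the cluster algebra: it re-indexes via $\tilde{x}_i=x_{1-i}$, kills the coefficients by setting $y_1=y_2=1$ (equivalently $\hy_1=x_2^{-2}$, $\hy_2=x_1^2$) because the seed $\set{x_0,x_{-1}}$ does not carry principal coefficients, and then computes the two exchange relations linking $\set{x_1,x_2}$ to $\set{x_0,x_{-1}}$ explicitly. You realize the same symmetry as the linear reflection $\tau$ ($\rho_1\mapsto-\rho_2$, $\rho_2\mapsto-\rho_1$) of $\Scat^T(B)$, and the two exchange relations become the two wall crossings $C_1\to C_0$ and $C_0\to C_{-1}$. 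Your version buys you automatic tracking of the coefficients (no specialization and resubstitution), at the cost of having to prove that $\tau$ intertwines wall-crossing automorphisms, i.e.\ that $\tau\circ\p_{\gamma,\d,t}=\p_{\tau\gamma,\tau\d,t}\circ\tau$ and hence $\tau\circ\p_\infty=\p_{\tau\gamma_\infty}\circ\tau$; you correctly flag this as the one remaining obligation. It does check out: with $T\rho_i=-\rho_{\sigma(i)}$ on $V^*$ and $U\alpha_i=\alpha_{\sigma(i)}$ on $V$ one has $\br{Tp,U\beta}=-\br{p,\beta}$ and $\omega(U\beta\ck,U\phi)=-\omega(\beta\ck,\phi)$ (since swapping indices negates $B$), and these two sign flips exactly compensate the reversal of the crossing direction in \eqref{theta def x}--\eqref{theta def hat y}; incoming walls map to incoming walls because $T(\omega(\,\cdot\,,\beta))=\omega(\,\cdot\,,U\beta)$, so uniqueness of the consistent completion identifies $\tau(\Scat^T(B))$ with $\Scat^T(B)$. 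With that lemma supplied, your proof is complete.
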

\begin{proof}
Define $\tilde{x}_i$ to be $x_{1-i}$ for all $i\in\integers$.
Then $\p_{\gamma_i}(x_1^{-1}x_2)=\tilde{x}_{1-i}^{-1}\cdot\tilde{x}_{2-i}$ for $i\ge 1$, so $\N(\hy_1,\hy_2)x_1^{-1}x_2=\lim_{i\to-\infty}\tilde{x}_i\cdot\tilde{x}_{i+1}^{-1}$.
The cluster $\set{\tilde{x}_1,\tilde{x}_2}=\set{x_0,x_{-1}}$ can be taken to be the initial cluster in a cluster algebra with exchange matrix $B=\begin{bsmallmatrix}\,\,\,\,\,0&\,2\\-2&\,0\end{bsmallmatrix}$, but with non-principal coefficients.
However, we can ignore coefficients by setting $y_1=y_2=1$, or equivalently setting $\hy_1=x_2^{-2}$ and $\hy_2=x_1^2$.
Making this substitution on both sides of the equation $\N(\hy_1,\hy_2)\cdot x_1^{-1}x_2=\lim_{i\to-\infty}\tilde{x}_i\cdot\tilde{x}_{i+1}^{-1}$ and interpreting the new right side, we obtain
\begin{equation}\label{Nar functional eq 1}
\N(x_2^{-2},x_1^2)\cdot x_1^{-1}x_2=\N(\tilde{x}_2^{-2},\tilde{x}_1^2)\cdot\tilde{x}_1^{-1}\tilde{x}_2.
\end{equation}
Using the usual (coefficient-free) exchange relations \cite[(2.15)]{ca4}, we compute
\[\tilde{x}_1=x_0=\frac{1+x_1^2}{x_2}\,,\quad\text{and}\quad\tilde{x}_2=x_{-1}=\frac{1+(1+x_1^2)^2x_2^{-2}}{x_1}.\]
Thus \eqref{Nar functional eq 1} becomes
\begin{multline}\label{Nar functional eq 2}
\N(x_2^{-2},x_1^2)\cdot x_1^{-1}x_2\\=\N\biggl(\frac{x_1^2}{(1+(1+x_1^2)^2x_2^{-2})^2},\frac{(1+x_1^2)^2}{x_2^2}\biggr)\cdot\frac{x_2}{1+x_1^2}\cdot\frac{1+(1+x_1^2)^2x_2^{-2}}{x_1}.
\end{multline}
Canceling $x_1^{-1}x_2$ and replacing $x_2^{-2}$ by $\hy_1$ and $x_1^2$ by $\hy_2$ in \eqref{Nar functional eq 2}, we obtain
\begin{equation}\label{Nar functional eq 3}
\N(\hy_1,\hy_2)=\N\biggl(\frac{\hy_2}{(1+(1+\hy_2)^2\hy_1)^2},\hy_1(1+\hy_2)^2\biggr)\frac{1+(1+\hy_2)^2\hy_1}{1+\hy_2}.
\end{equation}
Finally, replacing $\hy_1$ by $\hy_1(1+\hy_2)^{-2}$ in \eqref{Nar functional eq 3}, we obtain
the proposition.
\end{proof}

\begin{proof}[Proof of Theorem~\ref{Nar thm}]
Write $\N(\hy_1,\hy_2)=\sum_{i,j\ge0}n_{ij}\hy_1^i\hy_2^j$.
Extracting the coefficient of $\hy_1^i\hy_2^j$ on both sides of \eqref{Nar fun eq}, we obtain
\begin{equation}\label{Nar recur prelim}
\sum_{k=0}^jn_{ik}\begin{psmallmatrix}-2i\\\\j-k\end{psmallmatrix}+\sum_{k=0}^{j-1}n_{ik}\begin{psmallmatrix}{-2i}\\\\{j-1-k}\end{psmallmatrix}
=\sum_{k=0}^in_{jk}\begin{psmallmatrix}-2j\\\\i-k\end{psmallmatrix}+\sum_{k=0}^{i-1}n_{jk}\begin{psmallmatrix}-2j\\\\i-1-k\end{psmallmatrix}
\end{equation}
Since $\binom{z}{-1}=0$ for all $z\neq-1$, we can extend the second sum on each side (allowing $k=j$ or $k=i$ respectively) and combine the two sums using the identity $\binom{z}{w}+\binom{z}{w-1}=\binom{z+1}{w}$ to obtain
\begin{equation}\label{Nar recur}
\sum_{k=0}^jn_{ik}\begin{psmallmatrix}-2i+1\\\\j-k\end{psmallmatrix}
=\sum_{k=0}^in_{jk}\begin{psmallmatrix}-2j+1\\\\i-k\end{psmallmatrix}
\end{equation}
To determine the $n_{ij}$ from \eqref{Nar recur}, we need ``boundary conditions.''
Proposition~\ref{simple Nar fact} says that $n_{00}=1$ and that otherwise $n_{ij}=0$ for $i\le j$.
Thus for $i>j$, we see that \eqref{Nar recur} writes $n_{ij}$ in terms of other nonzero coefficients $n_{i'j'}$ with $j'<j$ (or in terms of $n_{00}=1$), and thus uniquely determines $n_{ij}$ by induction.
Therefore, to complete the proof, it is enough to verify that the coefficients described in Theorem~\ref{Nar thm} satisfy \eqref{Nar recur}.

When $i>j=0$, the recursion \eqref{Nar recur} says that $n_{i0}=n_{00}\binom1i$, which is $1$ if $i=1$ and zero if $i>1$.
Thus when $i>j=1$, the recursion says that $n_{i1}=n_{10}\binom{-1}{i}=(-1)^i$.  
When $i>j>1$, the $k=0$ terms of the recursion are zero and the nonzero terms of the recursion are $n_{\ell m}$ with $\ell\ge m>1$.
We want to show that when we replace each such $n_{\ell m}$ by $(-1)^{\ell+m+1}\frac1{\ell-1}\binom{\ell-1}m\binom{\ell-1}{m-1}$, we get equality.
We rewrite each $\frac1{\ell-1}\binom{\ell-1}m\binom{\ell-1}{m-1}$ as $\frac{(2-\ell)_{m-1}(1-\ell)_{m-1}}{(2)_{m-1}(m-1)!}$.
Here $(x)_{m-1}$ denotes---as is standard in the context of hypergeometric series---the \emph{rising} factorial $x(x+1)\cdots(x+m-2)$.
We also rewrite $\binom{-2i+1}{j-k}$ as $(-1)^{k-1}\binom{-2i+1}{j-1}\frac{(1-j)_{k-1}}{(-2i-j+3)_{k-1}}$ and $\binom{-2j+1}{i-k}$ as $(-1)^{k-1}\binom{-2j+1}{i-1}\frac{(1-i)_{k-1}}{(-2j-i+3)_{k-1}}$.
Thus the identity we must check is 
\begin{multline}\label{Nar must check}
(-1)^i\binom{-2i+1}{j-1}\sum_{k=1}^j\frac{(2-i)_{k-1}(1-i)_{k-1}(1-j)_{k-1}}{(2)_{k-1}(-2i-j+3)_{k-1}(k-1)!}\\
=(-1)^j\binom{-2j+1}{i-1}\sum_{k=1}^i\frac{(2-j)_{k-1}(1-j)_{k-1}(1-i)_{k-1}}{(2)_{k-1}(-2j-i+3)_{k-1}(k-1)!}.
\end{multline}
In standard hypergeometric series notation, this is 
\begin{multline}\label{Nar hyper}
(-1)^i\binom{-2i+1}{j-1}\,_3F_2\left[\begin{smallmatrix}2-i&1-i&1-j\\2&-2i-j+3\\\end{smallmatrix};1\right]\\
=(-1)^j\binom{-2j+1}{i-1}\,_3F_2\left[\begin{smallmatrix}2-j&1-j&1-i\\2&-2j-i+3\\\end{smallmatrix};1\right].
\end{multline}
This is verified by applying Saalsch\"{u}tz' Theorem to both sides and making some simple manipulations to simplify both sides to $(-1)^{i+j-1}\frac{1}{i+j}\binom{i+j-2}{j-1}\binom{i+j}{j}$.
\end{proof}


\begin{remark}\label{Ralf and Ilke}
We thank Gregg Musiker for pointing out work of Canakci and Schiffler \cite{CaSch}, which provides a different way of computing the limit in Theorem~\ref{Nar thm}.
Rewritten in our notation (including switching the roles of $x_1$ and $x_2$), \cite[Corollary~7.6(b)]{CaSch} says 
\begin{equation}
\left[\lim_{i\to-\infty}\frac{x_{i-1}}{x_i}\right]_{y_1=y_2=1}=\left[\frac{x_2+x_0+\sqrt{(x_2-x_0)^2+4}}{2x_1}\right]_{y_1=y_2=1}.\end{equation}
We can put back the coefficients $y_1$ and $y_2$:
Each $\frac{x_{i+1}}{x_i}$ has $\g$-vector $-\rho_1+\rho_2$, so we insert coefficients so as to make the right side homogeneous with that $\g$-vector.
Using the fact that the $\g$-vector of $y_1$ is $2\rho_2$ 
and the $\g$-vector of $x_0$ is $-\rho_2$, we see that 
\begin{equation}\lim_{i\to-\infty}\frac{x_{i-1}}{x_i}=\frac{x_2+y_1x_0+\sqrt{(x_2-y_1x_0)^2+4y_1}}{2x_1}.\end{equation}
Now, using the fact that $y_1=\hy_1x_2^2$ and $x_0=\frac{1+\hy_2}{x_2}$ and then multiplying both sides by $x_1x_2^{-1}$, 
we see that $\lim_{i\to-\infty}\frac{F_{i-1}}{F_i}$ equals the right side of \eqref{Nar cor eq}.
\end{remark}

\begin{remark}\label{anon}
We thank an anonymous referee for pointing out the relationship between $\N(\hy_1,\hy_2)$ and the theta function $\thet_{-\rho_1+\rho_2}$, leading to yet another way to compute $\N(\hy_1,\hy_2)$.
We sketch the argument here.
In Section~\ref{trans clus sec}, we quoted the definition of theta functions $\thet_\lambda$ in terms of a generic point $p$ in the dominant chamber $D$.
As mentioned, $\thet_\lambda$ does not depend on the choice of $p$ as long as it is generic and in $D$.
More generally, the definition given there applies to define $\thet_{\lambda,p}$ for any point $p$ not in the support of the scattering diagram.
Taking $\lambda=-\rho_1+\rho_2$ and taking $p$ ``very close'' to the limiting ray, one can convince oneself that there are only two broken lines and that $\thet_{\lambda,p}=x_1^{-1}x_2(1+\hy_1\hy_2)$.
The result of \cite[Section~4]{CPS} mentioned in Section~\ref{trans clus sec} says that for fixed $\lambda$, the theta functions for different choices of $p$ are related by path-ordered products.
(See also \cite[Theorem~3.5]{GHKK}.)
One can use that result to compute $\thet_\lambda=\p_\infty(x_1^{-1}x_2(1+\hy_1\hy_2))$.
The latter is equal to $x_1^{-1}x_2\bigl(\N(\hy_1,\hy_2)+\frac{\hy_1\hy_2}{\N(\hy_1,\hy_2)}\bigr)$.
By putting principal coefficients into the known expression for $\thet_\lambda$ (e.g.\ \cite[Example~3.10]{GHKK} or \cite[Example~3.8]{CGMMRSW}), we obtain ${\thet_{\lambda}=x_1^{-1}x_2(1+\hy_1+\hy_1\hy_2)}$.
Combining these expressions for $\thet_\lambda$, we have a functional equation for $\N(\hy_1,\hy_2)$ whose solution is \eqref{Nar cor eq}.
One part of filling in the details of this sketch is to make precise the notion of choosing $p$ ``very close'' to the limiting ray.
This requires a limiting construction that propagates through the whole argument.
\end{remark}

\subsection{Some theta functions in rank $2$}\label{rk2 theta sec}
This section is an extended example illustrating the simple-minded approach to computing theta functions in transposed cluster scattering diagrams of rank $2$ with principal coefficients.
Example~\ref{theta g2 ex} illustrates in particular how roots and co-roots interact in the construction.
We only compute some of the theta functions, but all of the theta functions are computed in \cite{CGMMRSW} by a very different approach, namely showing that they coincide with the greedy basis constructed in \cite{greedy}.

We continue to take $B=\begin{bsmallmatrix}0&b\\a&0\end{bsmallmatrix}$ with $a<0$ and $b>0$.
We also continue to place $\rho_1$ to point to the right of the plane and $\rho_2$ to point up both with the same length \emph{in the page}.
We saw in Section~\ref{rk2 inf sec} (in the infinite case $ab\le-4$) that $\Scat^T(B)$ has walls given by the coordinate lines and additionally has walls (which here we will call \newword{diagonal walls}) contained in the second quadrant.
The same is true when we allow $ab>-4$, except that there are no diagonal walls when $a=b=0$.

For brevity, we will say that a broken line \newword{scatters on} the walls containing its points of nonlinearity.

The first, third, and fourth quadrants are cones of $\gFan(B)$, so Theorem~\ref{clus mon thm} describes theta functions for vectors $m_1\rho_1+m_2\rho_2$ with $m_1\ge0$ and/or $m_2\le0$.
(If we allow $a=b=0$, then the second quadrant is also a cone of $\gFan(B)$, so we have taken $a<0$ and $b>0$ here.)
These theta functions are also easily computed directly.
We will focus on $\thet_{m_1\rho_1+m_2\rho_2}$ with $m_1<0$ and $m_2>0$.

We begin with two families of cases where we don't have to think carefully about roots and co-roots.
The first family arises from a bound on $m_1$ with respect to $b$.
The notation $[x]_+$ means $\max(x,0)$.

\begin{prop}\label{theta m1 b}
If $-b\le m_1\le0$ and $m_2\ge0$, then 
\begin{align*}
\thet_{m_1\rho_1+m_2\rho_2}&=\sum_{i=0}^{-m_1}\binom{-m_1}iy_1^ix_1^{m_1}x_2^{m_2+ai}(1+y_2x_1^b)^{[-m_2-ai]_+}\\
&=\sum_{i=0}^{-m_1}\binom{-m_1}{i}y_1^ix_0^{[-m_2-ai]_+}x_1^{m_1}x_2^{[m_2+ai]_+}\,, 
\end{align*}
where $x_0=\frac{1+y_2x_1^b}{x_2}$ is the cluster variable obtained by exchanging $x_2$ from the cluster $\set{x_1,x_2}$. 
\end{prop}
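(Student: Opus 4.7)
The plan is to compute $\thet_\lambda$ directly from its broken-line definition, using a generic endpoint $p$ in the interior of the dominant chamber $D$. For $\lambda = m_1\rho_1 + m_2\rho_2$ with $-b \le m_1 \le 0$ and $m_2 \ge 0$, the initial direction $-\lambda$ has non-negative $\rho_1$-component and non-positive $\rho_2$-component, so every broken line starts at infinity in the direction $\lambda$, i.e., from the closed second quadrant. I would aim to show that the valid broken lines are parameterized by pairs $(i, j)$ with $0 \le i \le -m_1$ and $0 \le j \le [-m_2 - ai]_+$: the path scatters at most once on $\alpha_1^\perp$ with multiplicity $i$, and then, only when $m_2 + ai < 0$, at most once on the positive half of $\alpha_2^\perp$ with multiplicity $j$.

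The crucial step is ruling out scatterings on any wall normal to a positive root $\beta = c_1\alpha_1 + c_2\alpha_2$ with $c_1, c_2 > 0$. Since $p^*(\alpha_1) = a\rho_2$ and $p^*(\alpha_2) = b\rho_1$ (reading off the weight parts of the formulas in Table~\ref{scat root dict}), a scattering with multiplicity $k \ge 1$ on such a wall changes the current weight $\lambda_L$ by $k\cdot p^*(\beta) = k(c_2 b\rho_1 + c_1 a\rho_2)$, increasing its $\rho_1$-coefficient by $kc_2 b \ge b$. I would establish the inductive invariant that the $\rho_1$-coefficient of the current weight is at least $-b$ at every step of the broken line: initially it equals $m_1 \ge -b$; an $\alpha_1^\perp$-scattering leaves the $\rho_1$-coefficient unchanged; an $\alpha_2^\perp$-scattering with multiplicity $k \ge 1$ raises it from $\ge -b$ to $\ge 0$. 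Granted this invariant, the post-scattering $\rho_1$-coefficient at a diagonal wall is at least $-b + kc_2 b \ge 0$, so the post-scattering direction has non-positive $\rho_1$-component, trapping the path on the left and preventing it from reaching $p$. The same argument rules out scatterings on the negative half of $\alpha_2^\perp$.

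Once non-axis scatterings are excluded, a direct case analysis identifies the surviving broken lines. A single $\alpha_1^\perp$-scattering with multiplicity $i$ multiplies the monomial by $\binom{-m_1}{i}\hy_1^i = \binom{-m_1}{i}y_1^i x_2^{ai}$, using $\br{\lambda,-\alpha_1\ck} = -m_1$; this is all that happens when $m_2 + ai \ge 0$, since the post-scattering direction is then down-right and the path reaches $p$ without further wall crossings. When $m_2 + ai < 0$ the post-scattering direction is up-right, and depending on $p$ the path either reaches $p$ directly (contributing the same $j = 0$ monomial) or first crosses the positive half of $\alpha_2^\perp$, where an optional scattering with multiplicity $j \in \{1,\dots,-m_2-ai\}$ multiplies by $\binom{-m_2-ai}{j}\hy_2^j = \binom{-m_2-ai}{j}y_2^j x_1^{bj}$. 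Summing $\binom{-m_1}{i}\binom{[-m_2-ai]_+}{j}y_1^i y_2^j x_1^{m_1+bj}x_2^{m_2+ai}$ over valid $(i,j)$ yields the first formula, and the second follows from the identity $x_0^{[-m_2-ai]_+}x_2^{[m_2+ai]_+} = (1+y_2 x_1^b)^{[-m_2-ai]_+}x_2^{m_2+ai}$ using $x_0 = (1+y_2 x_1^b)/x_2$. The main technical obstacle is the bookkeeping required to check that, for every generic $p$, the $\alpha_1^\perp$-scattering point lands on whichever half of the $\rho_2$-axis makes the broken line consistent (both halves carry the same function $1 + \hy_1$, so the monomial contribution is unchanged), ensuring that the total is $p$-independent as it must be.
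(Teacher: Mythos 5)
Your proposal is correct and follows essentially the same route as the paper's proof: the key point in both is that $m_1\ge -b$ forces any scattering on a wall normal to a root with positive $\alpha_2$-coordinate to raise the $x_1$-exponent of the current monomial to at least $0$, trapping the rest of the path weakly to the left and preventing it from reaching $p$, after which the surviving broken lines are exactly the $(i,j)$-family you describe. Your invariant formulation and your remark about the two halves of $\alpha_1^\perp$ merely make explicit some bookkeeping that the paper leaves to the reader.
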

\begin{proof}
Every diagonal wall, as well as the horizontal wall, is orthogonal to a root with a positive $\alpha_2$-coordinate.
Consider a broken line $\gamma$ with initial monomial $x_1^{m_1}x_2^{m_2}$ that scatters first on a diagonal wall or on the horizontal wall.
This scattering will change the monomial to $c\cdot \hy_1^{n_1}\hy_2^{n_2}x_1^{m_1}x_2^{m_2}=c\cdot y_1^{n_1}y_2^{n_2}x_1^{m_1+bn_2}x_2^{m_2+an_1}$ for nonnegative integers $c$ and $n_1$ and positive integer $n_2$.
Since $-b\le m_1\le0$ and $b>0$, we have $m_1+bn_2\ge0$, so after scattering, the broken line moves to the left, and thus never reaches the first quadrant.
We see that $\gamma$ must either not scatter at all or must scatter first on the vertical wall.
Summing all of these possibilities, we obtain terms $x_1^{m_1}x_2^{m_2}(1+y_1x_2^a)^{-m_1}=\sum_{i=0}^{-m_1}\binom{-m_1}iy_1^ix_1^{m_1}x_2^{m_2+ai}$.
Each term represents a segment in a broken line, which can scatter on the horizontal line before reaching $p$ if and only if its slope is positive, or in other words $m_2+ai<0$.
In this case (and also when $m_2+ai=0$), the term $\binom{-m_1}iy_1^ix_1^{m_1}x_2^{m_2+ai}$ becomes terms $\binom{-m_1}iy_1^ix_1^{m_1}x_2^{m_2+ai}(1+y_2x_1^b)^{-m_2-ai}$, and otherwise the term is unchanged.
One easily convinces oneself that all of these sequences of scatterings can indeed be achieved by varying the line containing the initial infinite segment.
We see that $\thet_{m_1\rho_1+m_2\rho_2}=\sum_{i=0}^{-m_1}\binom{-m_1}iy_1^ix_1^{m_1}x_2^{m_2+ai}(1+y_2x_1^b)^{[-m_2-ai]_+}$.
\end{proof}

\begin{example}\label{theta m1 b ex}
Figure~\ref{theta m1 b fig} illustrates Proposition~\ref{theta m1 b} in the case where $a=-1$, $b=3$, $m_1=-3$ and $m_2=2$.
\begin{figure}
\scalebox{0.85}{
\includegraphics{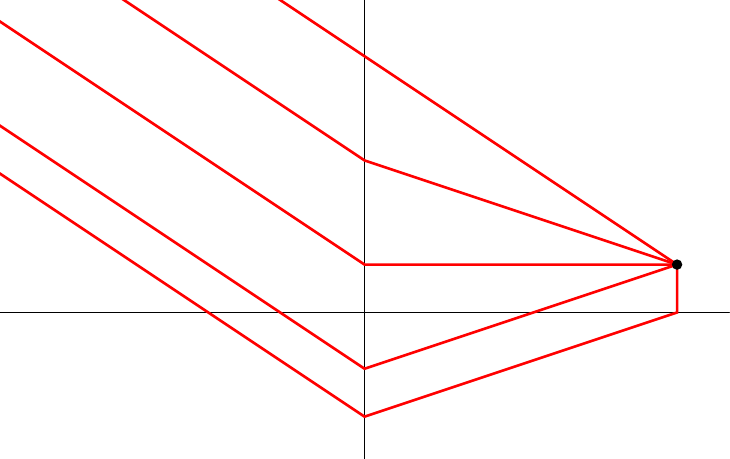}
\begin{picture}(0,0)(175,-70)
\put(0,143){$1+\hy_1=1+y_1x_2^{-1}$}
\put(-178,-10){$1+\hy_2=1+y_2x_1^3$}
\put(70,78){\textcolor{red}{$x_1^{-3}x_2^2$}}
\put(10,73){\textcolor{red}{$3y_1x_1^{-3}x_2$}}
\put(5,29){\textcolor{red}{$3y_1^2x_1^{-3}$}}
\put(-2,-10){\textcolor{red}{$y_1^3x_1^{-3}x_2^{-1}$}}
\put(150,8){\textcolor{red}{$y_1^3y_2x_2^{-1}$}}
\put(120,110){$B=\begin{bsmallmatrix}\,\,\,\,0&\,\,3\\-1&\,\,0\end{bsmallmatrix}$}
\put(150,30){$p$}
\end{picture}
}
\caption{Computing $\thet_{-3\rho_1+2\rho_2}$ for $a=-1$ and $b=3$}\label{theta m1 b fig}
\end{figure}
Walls are shown in black, but we have left out diagonal walls, which are irrelevant because $-b\le m_1\le0$.
Broken lines $\gamma$ are shown in red, together with the monomials $c_\gamma x^{\lambda_\gamma}y^{\beta_\gamma}$.
Scattering on the vertical wall bends the broken line in four different directions.
Three of the directions are downwards or horizontal, precluding any interaction with the horizontal wall.
One direction allows scattering (or not) on the horizontal wall.
We compute $\thet_{-3\rho_1+2\rho_2}=x_1^{-3}x_2^2+3y_1x_1^{-3}x_2+3y_1^2x_1^{-3}+y_1^3x_1^{-3}x_2^{-1}+y_1^3y_2x_2^{-1}$.
\end{example}

The second family of cases where we don't need to be careful of roots and co-roots arises from a bound on $m_2$ with respect to $a$.

\begin{prop}\label{theta m2 a}
If $m_1<-b<0$ and $0\le m_2<-a$, then $\thet_{m_1\rho_1+m_2\rho_2}$ equals
\begin{multline*}
x_1^{m_1}x_2^{m_2}+\sum_{i=1}^{-m_1}\sum_{j=0}^{m_2}\binom{-m_1-bj}i\binom{m_2}j y_1^iy_2^jx_1^{m_1+bj}x_2^{m_2+ai}(1+y_2x_1^b)^{-m_2-ai}\\
=x_1^{m_1}x_2^{m_2}+\sum_{i=1}^{-m_1}\sum_{j=0}^{m_2}\binom{-m_1-bj}i\binom{m_2}j y_1^iy_2^jx_0^{-m_2-ai}x_1^{m_1+bj},
\end{multline*}
where $x_0=\frac{1+y_2x_1^b}{x_2}$ as before.
\end{prop}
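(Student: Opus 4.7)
The plan is to adapt the broken-line analysis in the proof of Proposition~\ref{theta m1 b}. Since $\thet_\lambda$ is independent of the generic choice of $p$ in the interior of $D$, I would pick $p$ close to the positive $\rho_1$-axis (so that $p_2$ is much smaller than $p_1$) in order to keep the enumeration of broken lines tractable. I would then chase broken lines $\gamma$ with initial monomial $x_1^{m_1}x_2^{m_2}$ and initial velocity $(|m_1|,-m_2)$---pointing rightward and non-strictly downward---and sum the monomials they produce at $p$.

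The first step is to rule out scatterings on diagonal walls. If $\gamma$ scatters on a wall $\beta^\perp$ with primitive positive normal $\beta=c_1\alpha_1+c_2\alpha_2$ (with $c_1,c_2\ge1$) at a point $(x_d,y_d)$ in the second quadrant, then scattering with any $n\ge1$ changes the $\rho_2$-coordinate of $\lambda_L$ by $nac_1\le a<0$, and the assumption $m_2<-a$ forces the new $\rho_2$-coordinate $m_2+nac_1\le m_2+a$ to be strictly negative. Each subsequent scattering either preserves this coordinate (a horizontal one) or decreases it further (a vertical or diagonal one), so the velocity retains a positive $\rho_2$-component and the trajectory's $y$-coordinate is non-decreasing after the first diagonal hit. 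Hence $\gamma$ cannot reach $p$ once $p_2<y_d$, ruling out diagonal scatterings.

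The second step is to enumerate broken lines that scatter only on $\alpha_1^\perp$ and $\alpha_2^\perp$. Besides the unscattered line, which contributes $x_1^{m_1}x_2^{m_2}$, a geometric case analysis shows the scattering sequence must have the form (horizontal $j$, vertical $i$, horizontal $k$) with $0\le j\le m_2$, $1\le i\le -m_1-bj$, and $0\le k\le -m_2-ai$: the first horizontal scattering occurs at some $(x_h,0)$ with $x_h<0$, the vertical scattering at $(0,y_v)$ with $y_v<0$, and the final horizontal scattering at $(x',0)$ with $x'>0$. The condition $i\ge1$ is forced because a nontrivial vertical scattering is the only way to flip the downward component of the velocity into an upward one, allowing the line to reach the first quadrant. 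The signed exponents $\br{\lambda_L,\beta\ck}$ at the three scatterings are, in order, $m_2$, $-m_1-bj$, and $-m_2-ai$, so the triple $(j,i,k)$ contributes $\binom{m_2}{j}\binom{-m_1-bj}{i}\binom{-m_2-ai}{k}y_1^iy_2^{j+k}x_1^{m_1+b(j+k)}x_2^{m_2+ai}$.

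The main obstacle is the geometric bookkeeping: one must check that for $p$ close enough to the positive $\rho_1$-axis, every triple $(j,i,k)$ in the stated ranges really does arise from a unique valid broken line (scattering points correctly placed on the half-axes and in the correct order along $\gamma$), and no other sequence---for example one with $i=0$ and $j\ge1$ or $k\ge1$, or a sequence involving a second vertical scattering---gives a valid broken line. Once this is verified, summing over $k$ collects the $\binom{-m_2-ai}{k}y_2^k x_1^{bk}$ factors into $(1+y_2x_1^b)^{-m_2-ai}$, and summing over $(i,j)$ yields the stated formula.
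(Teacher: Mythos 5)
Your strategy is the paper's: pick $p$ in the first quadrant near the positive $\rho_1$-axis, exclude scattering on diagonal walls, enumerate the surviving broken lines as (horizontal, vertical, horizontal) sequences with a mandatory vertical bend, and collect the final horizontal scatterings into the factor $(1+y_2x_1^b)^{-m_2-ai}$. The exponents $m_2$, $-m_1-bj$, $-m_2-ai$ and the resulting formula all agree with the paper's proof, and the bookkeeping you defer at the end is exactly what the paper spends most of its effort on.

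The genuine gap is in your first step. From ``the $\rho_2$-exponent of $\lambda_L$ becomes and stays negative after the first diagonal hit'' you conclude that the trajectory's $y$-coordinate is thereafter non-decreasing, ``hence $\gamma$ cannot reach $p$ once $p_2<y_d$.'' But $y_d$ is not under your control: the first diagonal bend can occur arbitrarily close to the origin, so $y_d<p_2$ is entirely possible, and the line can then travel up-and-to-the-right with a \emph{small} positive slope into the first quadrant and genuinely reach $p$. Concretely, take $a=-4$, $b=1$, $m_1=-10$, $m_2=3$, and $p=(9,3/2)$ (so $p_1>p_2$). The broken line with initial velocity $(10,-3)$ that crosses all diagonal walls without bending except for one bend on the wall normal to $\alpha_1+\alpha_2$ at $(-s,4s)$, picking up the term $\binom{37}{1}\hy_1\hy_2$, acquires weight $(-9,-1)$ and velocity $(9,1)$ and reaches $p$ for $s=9/74$; correspondingly, some of the (horizontal, vertical, horizontal) lines in your enumeration fail to exist for this $p$, and the theta function is redistributed among a different set of broken lines. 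So the exclusion of diagonal walls is simply false for a general $p$ with $p_1>p_2$: one must take $p_2/p_1$ small in a way that depends on $m_1,m_2,a,b$. The paper closes this step by asserting that the post-scattering segment has slope at least $1$; your argument supplies no slope control at all, and the example above shows that even the paper's slope assertion needs care (the slope there is $1/9$). Fixing this --- quantifying the choice of $p$ and then re-verifying the existence and non-existence claims in your step 3 for that $p$ --- is the actual content of the proof, not a routine afterthought.
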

\begin{proof}
We exploit the freedom we have to choose the endpoint $p$ of the broken lines used to compute $\thet_{m_1\rho_1+m_2\rho_2}$.
Specifically, we take $p$ to be a point in the first quadrant with larger $\rho_1$-coordinate than $\rho_2$-coordinate.
We claim that no broken line with initial monomial $x_1^{m_1}x_2^{m_2}$ scatters first on a diagonal wall and then ends at $p$.
If the broken line scatters on a diagonal wall, then the monomial is changed to $c\cdot \hy_1^{n_1}\hy_2^{n_2}x_1^{m_1}x_2^{m_2}=c\cdot y_1^{n_1}y_2^{n_2}x_1^{m_1+bn_2}x_2^{m_2+an_1}$, this time for nonnegative integers $c$ and $n_2$ and positive integer $n_1$.
Since $0\le m_2<-a$, we have $m_2+an_1<0$, so the slope of the new segment is at least $1$.
Since the scattering occurs in the second quadrant and since all further scatterings only decrease the exponent on $x_2$, the broken line will never reach $p$, which was chosen to have larger $\rho_1$-coordinate than $\rho_2$-coordinate.
We conclude that each broken line contributing to $\thet_{m_1\rho_1+m_2\rho_2}$ scatters on the horizontal wall (or not), then from the vertical wall (or not), and then from the horizontal wall (or not).

The monomials that arise from scattering (or not) on the horizontal wall are $x_1^{m_1}x_2^{m_2}(1+\hy_2)^{m_2}=x_1^{m_1}x_2^{m_2}(1+y_2x_1^b)^{m_2}$, but the corresponding broken lines will never reach the first quadrant unless the exponent on $x_1$ is negative.
Thus the relevant terms are 
\[\sum_{j=0}^{\min(m_2,\lfloor\frac{-1-m_1}{b}\rfloor)}\binom{m_2}jy_2^jx_1^{m_1+bj}x_2^{m_2}.\]
When the broken lines associated to these terms scatter on the vertical wall, we obtain terms
\begin{multline*}
\sum_{j=0}^{\min(m_2,\lfloor\frac{-1-m_1}{b}\rfloor)}\binom{m_2}jy_2^jx_1^{m_1+bj}x_2^{m_2}(1+y_1x_2^a)^{-m_1-bj}\\
=\sum_{j=0}^{\min(m_2,\lfloor\frac{-1-m_1}{b}\rfloor)}\sum_{i=0}^{-m_1-bj}\binom{m_2}j\binom{-m_1-bj}i y_2^jy_1^ix_1^{m_1+bj}x_2^{m_2+ai}.
\end{multline*}
The corresponding broken lines that scatter on the vertical wall at a point below the horizontal wall will never reach the first quadrant unless the exponent on $x_2$ is negative.
This includes all broken lines that scattered first on the horizontal wall (i.e.\ all those corresponding to $j\ge1$) and also all those that first scattered on the vertical wall (i.e.\ $j=0$) and bent upwards.
(Since $p$ is on the right, those that bend upwards on the vertical wall cannot reach $p$ unless they scatter on the vertical wall at a point below the horizontal wall.)
By the hypothesis on $m_2$, the broken line bends upwards at the vertical wall if and only if $i\ge1$.
When $i\ge1$, each broken line can (for the proper choice of a line containing the initial infinite segment) scatter on the horizontal wall and reach $p$.
Thus $\thet_{m_1\rho_1+m_2\rho_2}$ has terms
\[\sum_{j=0}^{\min(m_2,\lfloor\frac{-1-m_1}{b}\rfloor)}\sum_{i=1}^{-m_1-bj}\binom{m_2}j\binom{-m_1-bj}i y_2^jy_1^ix_1^{m_1+bj}x_2^{m_2+ai}(1+y_2x_1^b)^{-m_2-ai}.\]
Since $0\le m_2<-a$, a broken line cannot scatter first on the vertical wall then continue downward or horizontally.
The only remaining possibility is that the broken line never scatters, and this gives a term $x_1^{m_1}x_2^{m_2}$.
Since $i$ is always positive, the factor $\binom{-m_1-bj}i$ is zero unless $-m_1-bj$ is also positive, so we can replace $\min(m_2,\lfloor\frac{-1-m_1}{b}\rfloor)$ with $m_2$ in the top limit of the outer sum.
Since $\binom{-m_1-bi}i=0$ when $i>-m_1-bj$, we can replace $-m_1-bj$ with $-m_1$ in the top limit of the inner sum.
\end{proof}

\begin{example}\label{theta m2 a ex}
Figure~\ref{theta m2 a fig} illustrates Proposition~\ref{theta m2 a} in the case where $a=-4$, $b=1$, $m_1=-2$ and $m_2=3$.
\begin{figure}
\scalebox{0.85}{
\includegraphics{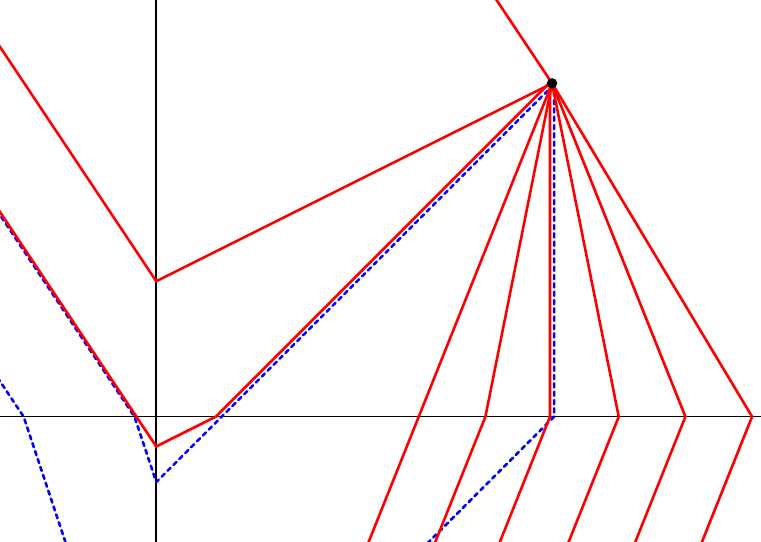}
\begin{picture}(0,0)(290,-60)
\put(0,193){$1+\hy_1=1+y_1x_2^{-4}$}
\put(-66,5){$1+\hy_2$}
\put(-62,-10){$=1+y_2x_1$}
\put(167,197){\textcolor{red}{\rotatebox{-56}{$x_1^{-2}x_2^3$}}}
\put(30,90){\textcolor{red}{\rotatebox{27}{$2y_1x_1^{-2}x_2^{-1}$}}}
\put(43,35){\textcolor{red}{\rotatebox{45}{$2y_1y_2x_1^{-1}x_2^{-1}$}}}
\put(113,15){\textcolor{red}{\rotatebox{68}{$y_1^2x_1^{-2}x_2^{-5}$}}}
\put(142,12){\textcolor{red}{\rotatebox{78}{$5y_1^2y_2x_1^{-1}x_2^{-5}$}}}
\put(171,10){\textcolor{red}{\rotatebox{90}{$10y_1^2y_2^2x_2^{-5}$}}}
\put(208,66){\textcolor{red}{\rotatebox{-78}{$10y_1^2y_2^3x_1x_2^{-5}$}}}
\put(232,56){\textcolor{red}{\rotatebox{-68}{$5y_1^2y_2^4x_1^2x_2^{-5}$}}}
\put(260,46){\textcolor{red}{\rotatebox{-59}{$y_1^2y_2^5x_1^3x_2^{-5}$}}}
\put(41,5){\textcolor{blue}{\rotatebox{45}{$3y_1y_2x_1^{-1}x_2^{-1}$}}}
\put(190,50){\textcolor{blue}{\rotatebox{-90}{$3y_1y_2^2x_2^{-1}$}}}
\put(20,160){$B=\begin{bsmallmatrix}\,\,\,\,0&\,\,1\\-4&\,\,0\end{bsmallmatrix}$}
\put(190,165){$p$}
\end{picture}
}
\caption{Computing $\thet_{-2\rho_1+3\rho_2}$ for $a=-4$ and $b=1$}\label{theta m2 a fig}
\end{figure}
Walls are again shown in black with diagonal walls again left out because they are irrelevant.
The two broken lines that scatter first on the horizontal wall are blue and dotted.
All other broken lines are shown in red.
We show the monomials $c_\gamma x^{\lambda_\gamma}y^{\beta_\gamma}$ in the appropriate colors.
The six red broken lines and the blue dotted line that exit the bottom of the picture have all scattered (outside of the frame of the picture) on the vertical wall.

There is only one direction that broken lines can bend if they first scatter on the horizontal wall, and then they must scatter on the vertical wall, with only one possible direction.
They may then scatter, or not, on the horizontal wall.

Broken lines that first scatter on the vertical wall can do so in two different directions.
The broken lines can then scatter or not on the horizontal wall.
We compute $\thet_{-2\rho_1+3\rho_2}$ to be
\[x_1^{-2}x_2^3+3y_1y_2x_1^{-1}x_2^{-1}(1+y_2x_1)+2y_1x_1^{-2}x_2^{-1}(1+y_2x_1)+y_1^2x_1^{-2}x_2^{-5}(1+y_2x_1)^5.\]
\end{example}

Proposition~\ref{theta m1 b} is enough to compute the cluster variables in all of the rank-$2$ finite-type cases except for a single cluster variable in the case where $a=-3$ and $b=1$, namely $\thet_{-2\rho_1+3\rho_2}$.
This case also falls outside of the hypotheses of Proposition~\ref{theta m2 a}, so we work it out next as an example.
When neither Proposition~\ref{theta m1 b} nor Proposition~\ref{theta m2 a} apply, general statements become significantly harder, because we need to consider scattering on diagonal walls, but (outside of finite and affine type), we don't know all of the functions attached to the diagonal walls.

\begin{example}\label{theta g2 ex}
We compute $\thet_{-2\rho_1+3\rho_2}$ when $a=-3$ and $b=1$.
The scattering diagram is shown in Figure~\ref{theta g2 fig}.
\begin{figure}
\scalebox{0.85}{
\includegraphics{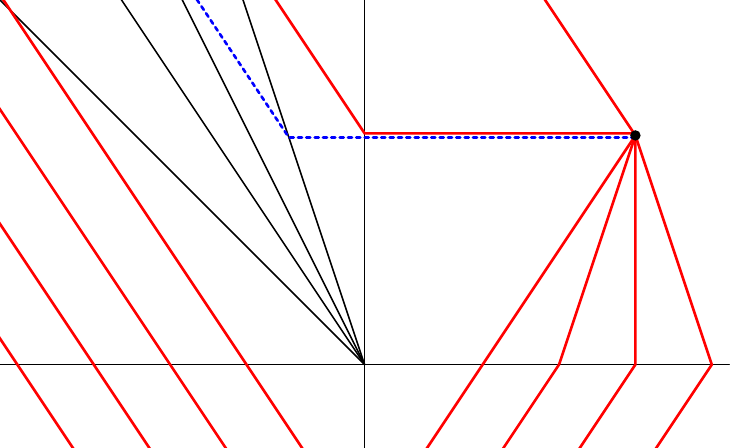}
\begin{picture}(0,0)(175,-40)
\put(-1,168){$1+\hy_1=1+y_1x_2^{-3}$}
\put(-178,-10){$1+\hy_2=1+y_2x_1$}
\put(100,158){\textcolor{red}{\rotatebox{-56}{$x_1^{-2}x_2^3$}}}
\put(40,116){\textcolor{red}{$2y_1x_1^{-2}$}}
\put(60,32){\textcolor{red}{\rotatebox{56}{$y_1^2x_1^{-2}x_2^{-3}$}}}
\put(78,4){\textcolor{red}{\rotatebox{71.5}{$3y_1^2y_2x_1^{-1}x_2^{-3}$}}}
\put(129,50){\textcolor{red}{\rotatebox{-90}{$3y_1^2y_2^2x_2^{-3}$}}}
\put(147,60){\textcolor{red}{\rotatebox{-71.5}{$y_1^2y_2^3x_1x_2^{-3}$}}}
\put(30,96){\textcolor{blue}{$3y_1y_2x_1^{-1}$}}

\put(-59,174){\rotatebox{-71.5}{$1+\hy_1\hy_2$}} 
\put(-68,133){\rotatebox{-63}{$1+\hy_1^2\hy_2^3$}} 
\put(-115,172){\rotatebox{-56}{$1+\hy_1\hy_2^2$}} 
\put(-165,165){\rotatebox{-45}{$1+\hy_1\hy_2^3$}} 

\put(20,140){$B=\begin{bsmallmatrix}\,\,\,\,0&\,\,1\\-3&\,\,0\end{bsmallmatrix}$}
\put(130,116){$p$}
\end{picture}
}
\caption{Computing $\thet_{-2\rho_1+3\rho_2}$ for $a=-3$ and $b=1$}\label{theta g2 fig}
\end{figure}
We again choose $p$ to be a point in the first quadrant whose $\rho_1$-coordinate is larger than its $\rho_2$-coordinate.
We consider first broken lines that do not scatter on diagonal walls.
These broken lines are shown in red in Figure~\ref{theta g2 fig}.
If a broken line scatters first on the horizontal wall, it can only bend one direction (because it must continue to the right), and the corresponding monomial becomes $3y_2x_1^{-1}x_2^3$.
It must then scatter on the vertical wall at a point below the horizontal wall, becoming a term of $3y_2x_1^{-1}x_2^3(1+y_1x_2^{-3})$.
However, the corresponding segments are horizontal or downwards, and thus never reach the first quadrant.
Arguing as in Proposition~\ref{theta m1 b}, we obtain terms $x_1^{-2}x_2^3+2y_1x_1^{-2}+y_1^2x_1^{-2}x_2^{-3}(1+y_2x_1)^3$.

We next consider broken lines that scatter on diagonal walls.
The functions on the diagonal walls are 
\begin{align*}
1+\hy_1\hy_2&=1+y_1y_2x_1x_2^{-3}\\
1+\hy_1^2\hy_2^3&=1+y_1y_2x_1^3x_2^{-6}\\
1+\hy_1\hy_2^2&=1+y_1y_2x_1^2x_2^{-3}\\
1+\hy_1\hy_2^3&=1+y_1y_2x_1^3x_2^{-3}
\end{align*}
There is only one of these walls on which a broken line can scatter and still move to the right, namely the wall whose function is $1+\hy_1\hy_2=1+y_1y_2x_1x_2^{-3}$.

This is the point in the calculation where one might make an error if one is not careful about roots and co-roots.
The wall with function $1+\hy_1\hy_2$ is orthogonal to the \emph{root} $\alpha_1+\alpha_2$.
As in Section~\ref{back sec}, we compute $\alpha_1=\alpha_1\ck$ and $\alpha_2=\frac13\alpha_2\ck$, so the primitive \emph{co-root} orthogonal to the wall is $3\alpha_1\ck+\alpha_2\ck$.
(That is why, as shown in Figure~\ref{theta g2 fig}, the wall consists of nonnegative multiples of $-\rho_1+3\rho_2$.)
We compute $\br{2\rho_1-3\rho_2,3\alpha_1\ck+\alpha_2\ck}=6-3=3$, so to scatter on this wall, the monomial $x_1^{-2}x_2^3$ is multiplied by a nontrivial term in $(1+\hy_1\hy_2)^3=(1+y_1y_2x_1x_2^{-3})^3$.
The only possible nontrivial term is $3y_1y_2x_1x_2^{-3}$, because every other nontrivial term will give a segment with slope $\ge1$, and the broken line will never reach $p$, whose $\rho_1$-coordinate is larger than its $\rho_2$-coordinate.
Thus, the unique broken line that scatters on a diagonal wall travels horizontally to $p$ and contributes $3y_1y_2x_1^{-1}$ to $\thet_{-2\rho_1+3\rho_2}$.
This broken line is shown blue and dotted in Figure~\ref{theta g2 fig}.
We have computed 
\begin{align}
\begin{split}
\thet_{-2\rho_1+3\rho_2}&=x_1^{-2}x_2^3+2y_1x_1^{-2}+y_1^2x_1^{-2}x_2^{-3}(1+y_2x_1)^3+3y_1y_2x_1^{-1}\\
&=x_1^{-2}x_2^3+2y_1x_1^{-2}+y_1^2x_0^3x_1^{-2}+3y_1y_2x_1^{-1}.
\end{split}
\end{align}
\end{example}

\begin{remark}\label{which thetas are cluster mons?}
When $a<0$, $b>0$ and $-ab>4$, a theta function $\thet_{m_1\rho_1+m_2\rho_2}$ is a cluster monomial if and only if $\frac{m_2}{m_1}$ \emph{does not} satisfy the inequalities for $\frac pq$ given in \eqref{pq range}.
When $a$ and/or $b$ is large in absolute value, these inequalities are roughly $a<\frac{m_2}{m_1}<\frac1{-b}$.
Under the hypotheses of  Proposition~\ref{theta m1 b}, if $-a$ is large, we can choose $m_2$ large enough so that the theta function $\thet_{m_1\rho_1+m_2\rho_2}$ is not a cluster monomial.
Under the hypotheses of Proposition~\ref{theta m2 a}, if $b$ is large, we can choose $-m_1$ large enough so that $\thet_{m_1\rho_1+m_2\rho_2}$ is not a cluster monomial.
In the affine cases, the theta functions that are not cluster monomials are all contained in a single ray.
Proposition~\ref{theta m1 b} computes one or two of these theta functions in each affine case 
($\thet_{-\rho_1+\rho_2}$ and $\thet_{-2\rho_1+2\rho_2}$ for $a=-2$ and $b=2$; 
$\thet_{-\rho_1+2\rho_2}$ for $a=-4$ and $b=1$; and  
$\thet_{-2\rho_1+\rho_2}$ and $\thet_{-4\rho_1+2\rho_2}$ for $a=-1$ and $b=4$).
Proposition~\ref{theta m2 a} does not compute (in the affine case) any theta functions that are not cluster monomials. 
\end{remark}

\section{Scattering diagrams of acyclic finite type}\label{camb sec}
When $B$ is acyclic and of finite type, the $\g$-vector fan can be constructed as a Cambrian fan.  
This fan is complete, so it is the full scattering fan $\ScatFan^T(B)$.
In this section, we review the construction of the Cambrian fan and use elements of the construction to construct scattering diagrams directly for $B$ of finite acyclic type.
Therefore, cluster monomials, and in particular, cluster variables can be constructed in terms of broken lines or path-ordered products in the Cambrian scattering diagram.

\subsection{Cambrian fans}\label{camb fan sec}
We quickly review the definition of sortable elements and Cambrian fans, skipping over a lot of the combinatorics and geometry behind the definition.
For more details, see for example \cite{sortable,camb_fan,typefree}.

We continue the notation and background from Section~\ref{back sec}.
An exchange matrix $B$ is \newword{acyclic} if there exists no cycle $i_0,\ldots,i_\ell=i_0$ of indices such that $b_{i_{k-1},i_k}>0$ for $k=1,\ldots,\ell$.
In this case, the sign information in $B$ is encoded in a choice of a \newword{Coxeter element} $c$ of $W$.
A Coxeter element is an element of $W$ that can be written as a product of the simple reflections $S$, with each simple reflection appearing exactly once.
Thus we specify a Coxeter element by giving a total order on $S$.
The exchange matrix $B$ determines a Coxeter element by multiplying the elements of $S$ in order so that $s_i$ precedes $s_j$ whenever $b_{ij}>0$.
There may be more than one way of ordering $S$ subject to this requirement, but such orders give different expressions for the same element $c$.
A simple reflection $s\in S$ is \newword{initial} in $c$ if one of these orderings has $s$ first, or equivalently if $s=s_i$ and $b_{ij}\ge0$ for $j=1,\ldots,n$.
Similarly $s$ is \newword{final} in $c$ if it is last in one of these orderings.
If $s$ is initial or final in $c$ then $scs$ is also a Coxeter element.
Furthermore, $s$ is initial in $c$ if and only if $s$ is final in $scs$.

A \newword{word} for $w\in W$ is an expression $a_1\cdots a_k$ for $w$ with $a_i\in S$ for all $i$.
The \newword{length} $\ell(w)$ of $w$ is the smallest $k$ for which a word $a_1\cdots a_k$ exists for $w$, and a \newword{reduced word} for $w\in W$ is a word $a_1\cdots a_{\ell(w)}$ for $w$.

A \newword{reflection} of $W$ is an element conjugate to some $s\in S$.
Equivalently, $t\in W$ is a reflection if and only if it has an $(n-1)$-dimensional fixed space.
In this case, the fixed space is $\beta^\perp$ for some root $\beta$, and this connection defines a bijection between reflections $t$ and positive (real) roots $\beta_t$.
An \newword{inversion} of $w\in W$ is a reflection $t$ such that $\ell(tw)<\ell(w)$.
A reflection $t$ is an inversion of $w$ if and only if $D$ and $wD$ are on opposite sides of the hyperplane $\beta_t^\perp$, for $D$ as in \eqref{D def}.
A \newword{cover reflection} of $w$ is an inversion $t$ of $w$ such that there exists $s\in S$ with $tw=ws$.
The elements $s$ that appear in this way are called \newword{(right) descents}, and the correspondence between descents of $w$ and cover reflections of $w$ is bijective.

A \newword{parabolic subgroup} of $W$ is a subgroup $W_J$ generated by some $J\subseteq S$.
For us, the most important kind of parabolic subgroup is $W_\br{s}$ for $s\in S$ and $\br{s}$ standing for $S\setminus\set{s}$.

We define the \newword{$c$-sortable elements} recursively by declaring that the identity is $c$-sortable for any $c$ in any Coxeter group $W$ and by the following two conditions for $s$ initial in $c$.
\begin{enumerate}[(i)]
\item If $w$ has a reduced word beginning with $s$ (or equivalently if $\ell(sw)<\ell(w)$), then $w$ is $c$-sortable if and only if $sw$ is $scs$-sortable.
\item If $w$ does not have a reduced word beginning with $s$ (or equivalently if $\ell(sw)>\ell(w)$), then $w$ is $c$-sortable if and only if $w$ is in $W_\br{s}$ and $w$ is $sc$-sortable as an element of $W_\br{s}$.
\end{enumerate}
These conditions decide the $c$-sortability of an element $w$ by induction on $\ell(w)$ and on $n$.
They make sense in particular because, when $s$ is initial in $c$, $scs$ is a Coxeter element of $W$ and $sc$ is a Coxeter element of $W_\br{s}$.
The notion of $c$-sortability is well-defined in light of the usual non-recursive definition.  

For each $c$-sortable element $v$, we define a set $C_c(v)\subset\Phi$ with $|C_c(v)|=n$.
This can be defined in terms of ``skips'' in a special ``$c$-sorting word'' for $v$, as explained in \cite[Section~5]{typefree}, but here we give the simple recursive definition:
If $v$ is the identity, then $C_c(v)=\set{\alpha_1,\ldots,\alpha_n}$ (the set of simple roots), and for $s=s_i$ initial in $c$, 
\begin{equation}\label{Cc def}
C_c(v)=\begin{cases}
sC_{scs}(sv)&\text{if }\ell(sv)<\ell(v),\text{or}\\
C_{sc}(v)\cup\set{\alpha_i}&\text{if }\ell(sv)>\ell(v).
\end{cases}
\end{equation}
Furthermore, define 
\begin{equation}
\Cone_c(v)=\bigcap_{\beta\in C_c(v)}\set{p\in V^*:\br{p,\beta}\ge0}.
\end{equation}
The cone $\Cone_c(v)$ contains the cone $vD$ for $D$ as in \eqref{D def}, as an immediate consequence of \cite[Theorem~6.3]{typefree}.
In particular, $\Cone_c(v)$ is full-dimensional.
It is also simplicial.
No hyperplane $\alpha_i^\perp$ intersects the interior of any cone $\Cone_c(v)$.
(This is the concatenation of \cite[Proposition~4.26]{framework} with \cite[Theorem~5.12]{framework} or, in the finite-type case, is a special case of \cite[Proposition~9.5]{typefree}.)

For each $J\subseteq S$, let $V_J$ be the subspace of $V$ spanned by $\set{\alpha_i:s_i\in J}$ and identify $(V_J)^*$ in the natural way with the subspace of $V^*$ spanned by $\set{\rho_i:s_i\in J}$.
Then \eqref{Cc def} implies the following recursion for $s=s_i$ initial in $c$.
\begin{equation}\label{cone recur}
\Cone_c(v)=\begin{cases}
s\Cone_{scs}(sv)&\text{if }\ell(sv)<\ell(v),\text{or}\\
\Span_{\reals_{\ge0}}(\Cone_{sc}(v)\cup\set{\rho_i})&\text{if }\ell(sv)>\ell(v).
\end{cases}
\end{equation}
Here $\Cone_{sc}(v)$ lives in the subspace $(V_\br{s})^*$ of $V^*$.
(This subspace is $\alpha_i^\perp$ for $s=s_i$.)
The notation $\Span_{\reals_{\ge0}}$ means nonnegative linear span.

The \newword{$c$-Cambrian fan} $\F_c$ is the collection consisting of the cones $\Cone_c(v)$ and their faces, for all $c$-sortable elements $v$.
The following theorem is the concatenation of \cite[Corollary~5.15]{framework} with Theorem~\ref{clus mon thm}.
Versions of \cite[Corollary~5.15]{framework} appear as \cite[Theorem~9.1]{typefree}, \cite[Theorem~1.10]{YZ} (see \cite[Remark~6.1]{YZ}), and by concatenating results of \cite{con_app,sort_camb}.

\begin{theorem}\label{camb fan thm}
If $B$ is acyclic with associated Coxeter element $c$, then $\F_c$ is a simplicial fan, and in particular a subfan of $\gFan(B)$.
\end{theorem}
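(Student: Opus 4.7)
The plan is to prove both statements by a double induction: outer on the rank $n$ and inner on the length $\ell(v)$ of the $c$-sortable element $v$. The base case $v=e$ is immediate, since $\Cone_c(e)=D$ is both a maximal cone of $\F_c$ and, by Theorem~\ref{clus mon thm}, a maximal cone of $\gFan(B)$. Simpliciality of every $\Cone_c(v)$ is a direct consequence of \eqref{Cc def}, which yields $|C_c(v)|=n$ by induction; thus every $\Cone_c(v)$ is cut out by exactly $n$ linear inequalities.

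For the inductive step, pick $s=s_i$ initial in $c$ and apply the recursion \eqref{cone recur}. If $\ell(sv)<\ell(v)$, then $sv$ is $scs$-sortable and shorter, so by induction $\Cone_{scs}(sv)$ is a cone of $\gFan(\mu_i(B))$. Theorem~\ref{7.12} provides a piecewise-linear isomorphism $\eta_i^{B^T}:\gFan(\mu_i(B))\to\gFan(B)$, and the crucial point is that on $\Cone_{scs}(sv)$ this map agrees with the linear action of $s$ on $V^*$. This holds because $\Cone_{scs}(sv)$ lies entirely on a single side of $\alpha_i^\perp$---no Cambrian cone meets an $\alpha_j^\perp$ transversally, as stated in the text---and on the appropriate side of $\alpha_i^\perp$ the mutation map $\eta_i^{B^T}$ is precisely the reflection $s_i$. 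Hence $s\cdot\Cone_{scs}(sv)=\eta_i^{B^T}(\Cone_{scs}(sv))$ is a cone of $\gFan(B)$. If instead $\ell(sv)>\ell(v)$, then $v\in W_\br{s}$ is $sc$-sortable in the parabolic $W_\br{s}$, so the outer induction on rank identifies $\Cone_{sc}(v)$ with a cone in the $\g$-vector fan of the principal submatrix $B'$ of $B$ obtained by deleting row and column $i$. Under the natural identification of clusters of $\A_\bullet(B')$ with clusters of $\A_\bullet(B)$ containing the initial cluster variable $x_i$ (whose $\g$-vector is $\rho_i$), $\Cone_{sc}(v)$ corresponds to $\Span_{\reals_{\ge0}}(\Cone_{sc}(v)\cup\set{\rho_i})=\Cone_c(v)$, again a cone of $\gFan(B)$.

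Once each maximal cone of $\F_c$ is identified with a maximal cone of $\gFan(B)$, the fan axioms for $\F_c$ (closure under faces, and pairwise intersections being faces of each) follow from those of $\gFan(B)$, which is a fan by Corollary~\ref{cham g}. The main obstacle is the Case~1 verification that the piecewise-linear map $\eta_i^{B^T}$ restricts on $\Cone_{scs}(sv)$ to a single linear piece, namely the reflection $s$; all other steps amount to bookkeeping that aligns the parallel recursions on the sortable side (via \eqref{cone recur}) and on the cluster-mutation side (via $\mu_i$ and $\eta_i^{B^T}$).
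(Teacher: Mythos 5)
First, note that the paper does not actually prove this statement: it is quoted as the concatenation of \cite[Corollary~5.15]{framework} with Theorem~\ref{clus mon thm}, so any self-contained argument such as yours is necessarily taking a different route. Your Case 1 is essentially sound, and the key computation checks out: when $s_i$ is initial in $c$, all entries $b_{ji}$ with $j\ne i$ share a sign, and this is precisely what makes one linear piece of the relevant mutation map coincide with the reflection $s_i$ (whose dual action is $\rho_i\mapsto-\rho_i-\sum_{j\ne i}a_{ji}\rho_j$ and $\rho_j\mapsto\rho_j$ for $j\ne i$); for a general index neither linear piece of $\eta_i^{B^T}$ is a reflection, so you should state explicitly that initiality is what you are using. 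You also need the map going \emph{from} $\gFan(\mu_i(B))$ \emph{to} $\gFan(B)$, which by Theorem~\ref{7.12} is $\eta_i^{\mu_i(B)^T}$ rather than $\eta_i^{B^T}$; since $s_i$ is final in $s_ics_i$, that map restricts to $s_i$ on $\set{p:\br{p,\alpha_i}\ge0}$, which contains $\Cone_{s_ics_i}(s_iv)$, so the conclusion stands. (Minor point: ``cut out by $n$ inequalities'' does not by itself give simpliciality; you need linear independence of $C_c(v)$, which does follow from \eqref{Cc def} by the same induction.)

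The genuine gap is in Case 2. You assert that under ``the natural identification'' of clusters of $\A_\bullet(B')$ with clusters of $\A_\bullet(B)$ containing $x_i$, the cone $\Cone_{sc}(v)\in\gFan(B')$ corresponds to $\Span_{\reals_{\ge0}}(\Cone_{sc}(v)\cup\set{\rho_i})\in\gFan(B)$. This packages two nontrivial claims: that clusters of $\A_\bullet(B')$ arise by freezing $x_i$ in clusters of $\A_\bullet(B)$, and, more seriously, that the $\g$-vector with respect to $B$ of a cluster variable reachable without mutating in direction $i$ is the image of its $\g$-vector with respect to $B'$ under the inclusion $(V_{\br{s}})^*\into V^*$. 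Neither is quotable from this paper, and the second is roughly as deep as the statement being proved: it amounts to knowing that $\gFan(B')\oplus\reals_{\ge0}\rho_i$ is the star of the ray $\reals_{\ge0}\rho_i$ in $\gFan(B)$, the codimension-$1$ analogue of the quoted local-structure result \eqref{9.8} used in Proposition~\ref{camb consist}. The recursion for $\g$-vectors under mutation at $k\ne i$ in \cite{ca4} involves the entries $b_{ik}$, so the $i$-th coordinate is not obviously inert; some argument --- for instance via $F$-polynomials and the relation $\hy_j^{B}=\hy_j^{B'}x_i^{b_{ij}}$, or via a reduction of the scattering diagram along the ray $\rho_i$ --- must be supplied. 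Until it is, the parabolic half of your induction does not close.
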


\subsection{Cambrian scattering diagrams}
Continuing the notation from above, we now use the sortable/Cambrian machinery to directly construct a (transposed) scattering diagram equivalent to $\ScatFan^T(B)$.

The walls of the scattering diagram will be the codimension-$1$ faces of the $c$-Cambrian fan.
Each such face is the intersection of two maximal faces $\Cone_c(v)$ and $\Cone_c(v')$, and thus is contained in $\beta^\perp$ for some root with $\beta\in C_c(v)$ and $-\beta\in C_c(v')$.
Given a $c$-sortable element $v$, by \cite[Proposition~5.2]{typefree}, the set of negative roots in $C_c(v)$ equals $\set{-\beta_t:t\in\cov(v)}$, where $\cov(v)$ is the set of cover reflections of $v$.
Thus we can list each codimension-$1$ face of the $c$-Cambrian fan exactly once by running through all $c$-sortable elements $v$ and all cover reflections of each $v$.
Accordingly, we define the \newword{$c$-Cambrian scattering diagram} to be the set 
\begin{equation}
\CambScat(A,c)=\set{(\Cone_c(v)\cap\beta_t^\perp,1+\hy^{\beta_t}):t\in\cov(v)}.
\end{equation}
The notation emphasizes that the information in $B$ is equivalent to the information in the Cartan matrix $A$ and Coxeter element $c$.
This is a scattering diagram in the transposed sense of Section~\ref{prin trans sec}.

\begin{example}\label{scat rk2}
Figure~\ref{camb scat rk2} shows $\CambScat(A,c)$ for all cases with $n=2$, up to Proposition~\ref{scat antip} and the symmetry of swapping the indices $1$ and $2$.
For the associated root systems, see Figure~\ref{rk2roots}.
\end{example}


\begin{figure}
\renewcommand*{\arraystretch}{1.4}
\setlength{\doublerulesep}{1pt}
\medskip
\begin{tabular}{ll|||ll}
\hline
\!\!\!\!\!\raisebox{66pt}{\small$\begin{array}{l}
B=\begin{bsmallmatrix}0&0\\0&0\end{bsmallmatrix}\\
A=\begin{bsmallmatrix}2&0\\0&2\end{bsmallmatrix}\\
c=s_1s_2\\
\hy_1=y_1\\
\hy_2=y_2
\end{array}$}\!\!\!\!\!\!&
\scalebox{0.86}{\includegraphics{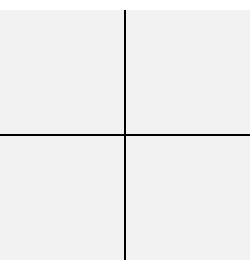}
\begin{picture}(0,0)(60,-60)
\put(28,4){$1+\hy_2$}
\put(-61,4){$1+\hy_2$}
\put(1,51){$1+\hy_1$}
\put(1,-56){$1+\hy_1$}
\end{picture}}
&
\!\!\!\!\raisebox{66pt}{\small$\begin{array}{l}
B=\begin{bsmallmatrix}\,\,\,\,0&1\\-1&0\end{bsmallmatrix}\\
A=\begin{bsmallmatrix}\,\,\,\,2&-1\\-1&\,\,\,\,2\end{bsmallmatrix}\\
c=s_1s_2\\
\hy_1=y_1x_2^{-1}\\
\hy_2=y_2x_1
\end{array}$}\!\!\!\!\!\!&
\scalebox{0.86}{\includegraphics{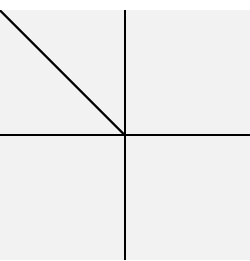}
\begin{picture}(0,0)(60,-60)
\put(29,4){$1+\hy_2$}
\put(-63,4){$1+\hy_2$}
\put(-53,56){\rotatebox{-45}{$1+\hy_1\hy_2$}}
\put(0,52){$1+\hy_1$}
\put(0,-56){$1+\hy_1$}
\end{picture}}\!\!\!
\\\hline
\!\!\!\!\!\!\raisebox{66pt}{\small$\begin{array}{l}
B=\begin{bsmallmatrix}\,\,\,\,0&1\\-2&0\end{bsmallmatrix}\\
A=\begin{bsmallmatrix}\,\,\,\,2&-1\\-2&\,\,\,\,2\end{bsmallmatrix}\\
c=s_1s_2\\
\hy_1=y_1x_2^{-2}\\
\hy_2=y_2x_1
\end{array}$}\!\!\!\!\!\!&
\scalebox{0.86}{\includegraphics{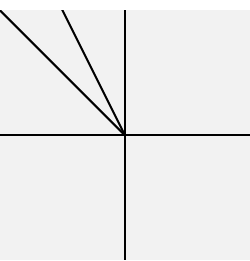}
\begin{picture}(0,0)(60,-60)
\put(29,4){$1+\hy_2$}
\put(-63,-10){$1+\hy_2$}
\put(-67,48){\rotatebox{-45}{$1+\hy_1\hy_2^2$}}
\put(-30,59){\rotatebox{-63.4}{$1+\hy_1\hy_2$}}
\put(0,52){$1+\hy_1$}
\put(0,-56){$1+\hy_1$}
\end{picture}}
&\!\!\!\!\raisebox{66pt}{\small$\begin{array}{l}
B=\begin{bsmallmatrix}\,\,\,\,0&1\\-3&0\end{bsmallmatrix}\\
A=\begin{bsmallmatrix}\,\,\,\,2&-1\\-3&\,\,\,\,2\end{bsmallmatrix}\\
c=s_1s_2\\
\hy_1=y_1x_2^{-3}\\
\hy_2=y_2x_1
\end{array}$}\!\!\!\!\!\!&
\scalebox{0.607}{\includegraphics{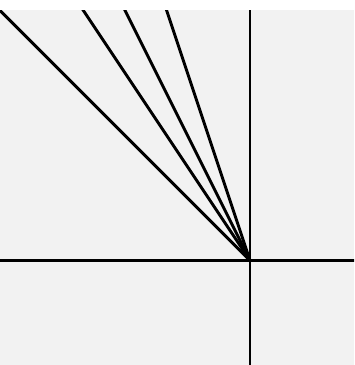}
\begin{picture}(0,0)(50,-50)
\put(20,4.5){$1+\hy_2$}
\put(-120,4.5){$1+\hy_2$}
\put(0,112){$1+\hy_1$}
\put(0,-46){$1+\hy_1$}
\put(-40,118){\rotatebox{-71.5}{$1+\hy_1\hy_2$}} 
\put(-60,118){\rotatebox{-63}{\small$1+\hy_1^2\hy_2^3$}} 
\put(-96,118){\rotatebox{-56}{$1+\hy_1\hy_2^2$}} 
\put(-125,106){\rotatebox{-45}{$1+\hy_1\hy_2^3$}} 
\end{picture}}\!\!\!
\\\hline
\end{tabular}
\caption{Cambrian scattering diagrams with $n=2$}
\label{camb scat rk2}
\end{figure}

%

Our goal is to prove the following theorem.

\begin{theorem}\label{camb scat thm}
If $B$ is acyclic of finite type with associated Cartan matrix $A$ and Coxeter element $c$, then $\Scat^T(B)$ is equivalent to $\CambScat(A,c)$.
\end{theorem}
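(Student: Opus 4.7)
My plan is to verify that $\CambScat(A,c)$ satisfies the three defining properties of $\Scat^T(B)$ up to equivalence: (a) its walls contained in a simple hyperplane $\alpha_i^\perp$ collectively represent the initial wall $(\alpha_i^\perp,1+\hy_i)$, (b) every other wall is outgoing, and (c) the diagram is consistent. Since $\Scat^T(B)$ is characterized as the unique-up-to-equivalence consistent scattering diagram obtained by appending outgoing walls to the initial data, these three conditions force $\CambScat(A,c)\sim\Scat^T(B)$.

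For (a), I will use that in the finite-type setting $W$ is finite, so $\F_c$ is a complete simplicial fan with maximal cones $\Cone_c(v)$ for $c$-sortable $v$. The walls of $\CambScat(A,c)$ in $\alpha_i^\perp$ are precisely those arising from pairs $(v,s_i)$ with $s_i\in\cov(v)$, via \cite[Proposition~5.2]{typefree}, and each such wall carries the function $1+\hy^{\alpha_i}=1+\hy_i$. Completeness of $\F_c$ forces their union to equal $\alpha_i^\perp$, and because the functions agree, this rampart is equivalent to the single wall $(\alpha_i^\perp,1+\hy_i)$. For (b), I will show that whenever $\beta_t$ is a non-simple positive root, the vector $\omega(\,\cdot\,,\beta_t)$ does not lie in $\Cone_c(v)\cap\beta_t^\perp$. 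This should follow from the recursion \eqref{Cc def}: non-simple roots in $C_c(v)$ enter only through the reflecting branch $C_c(v)=sC_{scs}(sv)$, so geometrically $\omega(\,\cdot\,,\beta_t)$ lies in a chamber separated from $\Cone_c(v)$ by a nontrivial element of $W$.

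The main step is (c), the direct verification of consistency from sortable combinatorics. I will use the local criterion: $\CambScat(A,c)$ is consistent if and only if, around each codimension-$2$ cone of $\F_c$, the composition of wall-crossing automorphisms along a small transverse loop is the identity modulo each $\m^{k+1}$. Every codimension-$2$ face lies in $\beta^\perp\cap\beta'^\perp$ for two positive roots spanning a rank-$2$ sub-root system of $\Phi$ of type $A_1\times A_1$, $A_2$, $B_2$, or $G_2$. Using the recursion \eqref{cone recur}, I will argue by induction on the rank $n$ that the chambers and walls of $\CambScat(A,c)$ immediately surrounding this codimension-$2$ face form a combinatorial and $\hy$-label-preserving copy of a rank-$2$ Cambrian scattering diagram for the rank-$2$ sub-root system with a compatible Coxeter element. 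The inductive step peels off a simple reflection $s$ initial in $c$ using the two cases of \eqref{cone recur}, reducing either to the Cambrian fan for $W_\br{s}$ (together with its scattering diagram, which has the same combinatorial structure in lower rank) or to a reflected copy for $scs$. The base cases $n=2$ are the four explicit diagrams of Figure~\ref{camb scat rk2}, whose consistency around the origin is a short computation: compose the wall-crossing automorphisms using the displayed $\hy$-labels and verify cancellation.

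The main obstacle is this rank-$2$ reduction inside step (c). Establishing that the local picture of $\F_c$ at a codimension-$2$ face is genuinely a rank-$2$ Cambrian scattering diagram, with the inherited $\hy$-monomials matching those that would be attached by the rank-$2$ construction directly, requires careful bookkeeping of how $c$-sortability and cover reflections restrict to a rank-$2$ reflection subgroup, and of how the function $1+\hy^{\beta_t}$ relates across this restriction. Once that reduction is in place, the consistency of each of the four rank-$2$ diagrams in Figure~\ref{camb scat rk2} is a finite, explicit computation, and the theorem follows by assembling (a), (b), and (c).
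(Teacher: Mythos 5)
Your proposal follows essentially the same route as the paper: replace the codimension-$1$ faces of $\F_c$ lying in each $\alpha_i^\perp$ by the single initial wall $(\alpha_i^\perp,1+\hy_i)$, prove outgoing-ness of the remaining walls by induction on length and rank via the recursions \eqref{Cc def} and \eqref{cone recur}, and verify consistency locally around codimension-$2$ faces by reducing to the four explicit rank-$2$ Cambrian diagrams. The local-structure claim you flag as the main obstacle is precisely \cite[Theorem~9.8]{typefree} (quoted as \eqref{9.8}), which the paper cites rather than reproves; the remaining bookkeeping is the observation that the $\hy$-monomials for the rank-$2$ subsystem and for $A$ differ by Laurent monomials $x^\lambda$ with $\lambda$ in the span of the untouched fundamental weights, which commute with the wall-crossing automorphisms around the given codimension-$2$ face.
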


Since $\CambScat(A,c)$ is a finite collection of walls, it is a scattering diagram.
Since none of the maximal cones of $\F_c$ intersect any hyperplane $\alpha_i^\perp$ in their interior, and since $\F_c$ is a finite, complete fan, each $\alpha_i^\perp$ is the union of finitely many walls of $\CambScat(A,c)$, each with function $1+\hy_i$.
Up to equivalence, these walls can be replaced by a single wall $(\alpha_i^\perp,1+\hy_i)$.
To complete the proof of Theorem~\ref{camb scat thm}, we show that every other wall is outgoing and that $\CambScat(A,c)$ is consistent, in Propositions~\ref{camb out} and~\ref{camb consist}, below.

First, we need some more background on the form $\omega$ in the acyclic case.
In this case, we write $\omega_c$ for $\omega$, where $c$ is the Coxeter element associated to $B$.
The point is that we can fix the Cartan matrix $A$ while changing $c$,
as in the following lemma, which is \cite[Lemma~3.8]{typefree}.

\begin{lemma}\label{omega s}
If $s$ is initial in $c$, then $\omega_c(\phi\ck,\beta)=\omega_{scs}(s\phi\ck,s\beta)$ for $\phi\ck\in Q\ck$ and $\beta\in Q$.
\end{lemma}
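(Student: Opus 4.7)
The plan is to use bilinearity of both $\omega_c$ and $\omega_{scs}$ to reduce to the check $\omega_c(\alpha_i\ck,\alpha_j)=\omega_{scs}(s\alpha_i\ck,s\alpha_j)$ for $s=s_k$ and all $i,j\in\set{1,\ldots,n}$. The left side is simply the entry $b_{ij}$ of the exchange matrix $B$ associated with $c$, so the task becomes to expand and simplify the right side.

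Before doing so, I would pin down how the exchange matrices $B=[b_{ij}]$ (for $c$) and $B'=[b'_{ij}]$ (for $scs$) relate. Both yield the same Cartan matrix, so their entries agree up to sign. Because $s_k$ is initial in $c$, the sign convention in Section~\ref{back sec} forces $b_{kj}\ge0$ for all $j$; because $s_k$ is final in $scs$, the symmetric condition forces $b'_{kj}\le0$ for all $j$. Combined with $|b'_{kj}|=|b_{kj}|$, this gives $b'_{kj}=-b_{kj}$ for $j\ne k$, and hence $b'_{jk}=-b_{jk}$ by skew-symmetrizability. For entries $b'_{\ell m}$ with $\ell,m\ne k$, writing a reduced word $c=s_k s_{i_2}\cdots s_{i_n}$ makes $scs=s_{i_2}\cdots s_{i_n}s_k$, which preserves the relative order of $s_{i_2},\ldots,s_{i_n}$; thus $b'_{\ell m}=b_{\ell m}$. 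In summary, $B'$ is $B$ with row and column $k$ negated.

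Next I would compute the reflection action on basis vectors: $s_k\alpha_j=\alpha_j-a_{kj}\alpha_k$, and using $\alpha_i\ck=\delta_i^{-1}\alpha_i$ together with the symmetrizability identity $\delta_k a_{ki}=\delta_i a_{ik}$ yields $s_k\alpha_i\ck=\alpha_i\ck-a_{ik}\alpha_k\ck$. I would then expand $\omega_{scs}(s_k\alpha_i\ck,s_k\alpha_j)$ bilinearly, using skew-symmetry to kill the $\omega_{scs}(\alpha_k\ck,\alpha_k)$ contribution, and split into cases depending on whether $i$ or $j$ equals $k$.

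The substantive case is $i,j\ne k$. Substituting $b'_{ij}=b_{ij}$, $b'_{ik}=-b_{ik}$, and $b'_{kj}=-b_{kj}$, the expansion collapses to $b_{ij}+a_{kj}b_{ik}+a_{ik}b_{kj}$, and I expect the crux to be showing that the two cross terms cancel. This cancellation is exactly what initiality of $s_k$ in $c$ provides: since $b_{kj}\ge0$ and $b_{ik}\le0$ (the latter by skew-symmetrizability), the defining relation $a_{\ell m}=-|b_{\ell m}|$ for $\ell\ne m$ gives $a_{kj}=-b_{kj}$ and $a_{ik}=b_{ik}$, so the cross terms become $-b_{kj}b_{ik}+b_{ik}b_{kj}=0$. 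The boundary cases $i=k$ or $j=k$ should be immediate: $s_k$ sends $\alpha_k\ck$ or $\alpha_k$ to its negative, producing a single sign flip that matches the flip in the relevant entry of $B'$.
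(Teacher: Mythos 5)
Your computation is correct and complete: reducing by bilinearity to basis vectors, identifying $B'$ as $B$ with row and column $k$ negated (which follows exactly as you say from the initial/final sign conventions and the preserved relative order of the remaining generators), and observing that the two cross terms cancel because initiality forces $a_{kj}=-b_{kj}$ and $a_{ik}=b_{ik}$ — this last step implicitly uses that the symmetrizing constants $\delta_i$ are positive, which is the paper's standing convention. Note that the paper does not prove this lemma at all; it quotes it as \cite[Lemma~3.8]{typefree}, so your argument is a self-contained verification of a result the paper only cites, and it is the natural direct proof.
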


We rewrite Lemma~\ref{omega s} in terms of the dual action on $V^*$ as follows.

\begin{lemma}\label{omega s dual}
If $s$ is initial in $c$, then for any $\beta\in Q$, the action of $s$ sends $\omega_c(\,\cdot\,,\beta)$ to $\omega_{scs}(\,\cdot\,,s\beta)$.
\end{lemma}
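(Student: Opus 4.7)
The plan is to translate Lemma~\ref{omega s} through the definition of the dual action and observe that the result is immediate. Regarded as a function of its first argument, $\omega_c(\,\cdot\,,\beta)$ is a linear functional on $V$ and so lives in $V^*$. Thus the statement makes sense, and what needs to be checked is that for every $\phi\ck\in Q\ck$ (which suffices by bilinearity, since $Q\ck$ spans $V$),
\begin{equation*}
\br{s\cdot\omega_c(\,\cdot\,,\beta),\,\phi\ck}\;=\;\omega_{scs}(\phi\ck,\,s\beta).
\end{equation*}

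First I would unpack the left-hand side using the dual action recalled in Section~\ref{back sec}: since $s$ is an involution, $\br{s\lambda,\phi\ck}=\br{\lambda,s\phi\ck}$ for all $\lambda\in V^*$ and $\phi\ck\in V$. Applied to $\lambda=\omega_c(\,\cdot\,,\beta)$, this gives
\begin{equation*}
\br{s\cdot\omega_c(\,\cdot\,,\beta),\,\phi\ck}\;=\;\omega_c(s\phi\ck,\,\beta).
\end{equation*}
Then I would apply Lemma~\ref{omega s} with $s\phi\ck$ in place of $\phi\ck$, which is a legal input because $s$ preserves $Q\ck$ (the simple reflections act on $Q$ and on $Q\ck$, since they permute real roots and real co-roots). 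This yields
\begin{equation*}
\omega_c(s\phi\ck,\,\beta)\;=\;\omega_{scs}\bigl(s(s\phi\ck),\,s\beta\bigr)\;=\;\omega_{scs}(\phi\ck,\,s\beta),
\end{equation*}
using $s^2=\id$. Combining the two displayed equalities gives the desired identity.

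There is essentially no obstacle here: the only substantive ingredient is Lemma~\ref{omega s}, and the rest is a formal manipulation with the dual action. The one minor thing to be careful about is that the dual action on $V^*$ is defined precisely so that $\br{s\lambda,s\mu}=\br{\lambda,\mu}$, so that the identity $\br{s\lambda,\phi\ck}=\br{\lambda,s\phi\ck}$ holds for involutive $s$; this is what lets us move $s$ from the functional onto its argument before invoking Lemma~\ref{omega s}. No integrality or choice of representative is involved, so the proof is a couple of lines long.
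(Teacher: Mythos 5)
Your proof is correct and is exactly the translation the paper intends: the paper gives no explicit proof, presenting Lemma~\ref{omega s dual} as an immediate rewriting of Lemma~\ref{omega s} via the dual action, and your two-line computation (moving $s$ onto the argument using $\br{s\lambda,\phi\ck}=\br{\lambda,s\phi\ck}$ and then invoking Lemma~\ref{omega s} with $s\phi\ck$) fills in precisely that rewriting.
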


We now prove part of Theorem~\ref{camb scat thm}.
\begin{proposition}\label{camb out}
Every wall of $\CambScat(A,c)$ not contained in some $\alpha_i^\perp$ is outgoing.
\end{proposition}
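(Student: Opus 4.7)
The plan is to reformulate outgoingness in a convenient way and then to verify it by induction using the cone recursion \eqref{cone recur}.

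First I would observe the key reformulation: for a wall $(\d,f_\d)$ with $\d=\Cone_c(v)\cap\beta_t^\perp$ normal to a positive root $\beta_t$, skew-symmetry of $\omega_c$ forces $\omega_c(\,\cdot\,,\beta_t)\in\beta_t^\perp$, so outgoingness is equivalent to the single statement
\[
\omega_c(\,\cdot\,,\beta_t)\notin\Cone_c(v).
\]
This is what I would prove by induction, using strong induction first on the rank $n$ and then on $\ell(v)$. The base cases ($n\leq 1$, or $v=e$) are vacuous since there are no cover reflections.

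For the inductive step, choose $s=s_i$ initial in $c$. In \textbf{Case 1}, $\ell(sv)>\ell(v)$, so $v\in W_{\br{s}}$; the standard parabolic fact that inversions of $v$ lie in $W_{\br{s}}$ shows that the cover reflections of $v$ in $W$ coincide with those in $W_{\br{s}}$ and that $\beta_t\in Q_{\br{s}}$. The recursion \eqref{cone recur} gives $\Cone_c(v)=\Cone_{sc}(v)+\reals_{\ge 0}\rho_i$, where the two summands meet only at $0$ since $\Cone_{sc}(v)\subset\alpha_i^\perp$ and $\rho_i\notin\alpha_i^\perp$. Writing $\omega_c(\,\cdot\,,\beta_t)=\omega_{sc}(\,\cdot\,,\beta_t)+\omega_c(\alpha_i\ck,\beta_t)\rho_i$ gives the same decomposition, so membership in $\Cone_c(v)$ would force $\omega_{sc}(\,\cdot\,,\beta_t)\in\Cone_{sc}(v)$. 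The inductive hypothesis applied in the lower-rank Coxeter group $W_{\br{s}}$ (where $\beta_t$ is still not a simple root) rules this out.

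In \textbf{Case 2}, $\ell(sv)<\ell(v)$ and $\Cone_c(v)=s\,\Cone_{scs}(sv)$. Applying $s$ and invoking Lemma~\ref{omega s dual}, I get $s\cdot\omega_c(\,\cdot\,,\beta_t)=\omega_{scs}(\,\cdot\,,s\beta_t)$, which transfers the containment question to $\omega_{scs}(\,\cdot\,,s\beta_t)\in\Cone_{scs}(sv)$. Because $\beta_t\neq\alpha_i$, the root $s\beta_t$ is still positive, and $s\beta_t=\beta_{sts}$ with $sts\in\cov(sv)$. When $s\beta_t$ is not simple, the inductive hypothesis (on $\ell$) applied to $(scs,sv,sts)$ closes the case. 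The delicate point is the subcase where $s\beta_t=\alpha_j$ is simple: then the image wall lies in $\alpha_j^\perp$ and is formally outside the statement being inducted on. I would handle this subcase separately, by further decomposing $\Cone_{scs}(sv)$ via another application of the recursion for an initial generator of $scs$ and appealing to the inductive hypothesis at a smaller rank within $W_{\br{s_k}}$ for a suitable $k$; the fact that $sv\neq e$ (because $sts\in\cov(sv)$) guarantees this further reduction is nontrivial.

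The main obstacle is precisely the simple-root subcase of Case 2. Avoiding a direct verification requires choosing the decomposition carefully so that the combinatorics of cover reflections of sortable elements forces $\omega_{scs}(\,\cdot\,,\alpha_j)$ out of the strictly smaller cone $\Cone_{scs}(sv)$, using that the positive root $\alpha_j$ is a cover-reflection root of $sv$ and that $sv$ is therefore separated from the dominant chamber by the hyperplane $\alpha_j^\perp$ in a way incompatible with containing $\omega_{scs}(\,\cdot\,,\alpha_j)$.
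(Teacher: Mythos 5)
Your overall strategy---reformulating outgoingness as $\omega_c(\,\cdot\,,\beta_t)\notin\Cone_c(v)$ (legitimate, since $\omega_c(\,\cdot\,,\beta_t)$ lies in $\beta_t^\perp$ automatically) and inducting on rank and length via an initial letter $s=s_i$ of $c$, using \eqref{cone recur} and Lemma~\ref{omega s dual}---is the same as the paper's, and your Case 1 (the parabolic reduction and the direct-sum decomposition along $\rho_i$) is essentially the paper's argument in primal rather than dual form. However, the subcase you yourself flag as ``the main obstacle'' is a genuine gap, and the plan you sketch for it does not work. When $\ell(sv)<\ell(v)$ and $s\beta_t=\alpha_j$ is simple, you propose to exploit that $\alpha_j^\perp$ separates $svD$ from $D$; but this cannot decide the question, because both the relevant facet of $\Cone_{scs}(sv)$ and the vector $\omega_{scs}(\,\cdot\,,\alpha_j)$ lie \emph{inside} the hyperplane $\alpha_j^\perp$, so separation by $\alpha_j^\perp$ says nothing about whether one contains the other. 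Iterating the recursion also cannot help on its own: once the root has become simple, the statement being inducted on no longer applies, and no further reduction will make it non-simple again.

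The paper closes this subcase by a direct computation with a \emph{different} hyperplane, namely $\alpha_i^\perp$ for the initial letter $s=s_i$. One has $\omega_{s_ics_i}(\,\cdot\,,\alpha_j)=\sum_k b'_{kj}\rho_k$, where $B'$ is the exchange matrix determined by $A$ and $s_ics_i$; since $\beta_t=s_i\alpha_j\neq\alpha_j$ forces $a_{ij}\neq0$, and since $s_i$ is \emph{final} in $s_ics_i$, the entry $b'_{ij}$ is strictly negative, so $\br{\omega_{s_ics_i}(\,\cdot\,,\alpha_j),\alpha_i\ck}<0$. On the other hand, $\ell(s_iv)<\ell(v)$ makes $s_i$ an inversion of $v$, so $\alpha_i^\perp$ does not separate $D$ from $s_ivD$; since no cone of the Cambrian fan crosses a simple hyperplane, $\Cone_{s_ics_i}(s_iv)$ lies entirely in $\set{p\in V^*:\br{p,\alpha_i\ck}\ge0}$, which excludes the vector. (The paper runs the induction on the adjacent cone carrying $+\beta_t\in C_c(v')$ rather than on your cone carrying $-\beta_t\in C_c(v)$, which makes this subcase a statement about a positive simple root in $C_{s_ics_i}(s_iv')$; but the identical computation closes the subcase in your formulation as well.) Without this step your proof is incomplete.
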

This proposition is closely related to the notion of \newword{$c$-alignment} in \cite[Theorem~4.1]{sortable}---see also \cite[Theorem~4.3]{typefree}---but here, we give a simple recursive proof.
\begin{proof}
Each wall is $(\d,f_\d)$ with $\d=\Cone_c(v)\cap\beta_t^\perp$ for $t\in\cov(v)$ and thus $-\beta_t\in C_c(v)$, but there is another cone $\Cone_c(v')$ sharing that wall as a facet and having $\beta_t\in C_c(v')$.
Thus it is enough to show that for every $c$-sortable $v$ and every \emph{positive} root $\beta\in C_c(v)\setminus\set{\alpha_1,\ldots,\alpha_n}$, the vector $\omega_c(\,\cdot\,,\beta)$ is not in $\Cone_c(v)$.
We argue by induction on $\ell(v)$ and on $n$.

Suppose $v$ is $c$-sortable and $\beta$ is a positive root in $C_c(v)\setminus\set{\alpha_1,\ldots,\alpha_n}$.
Let $s_i$ be initial in $c$.
First consider the case where $\ell(s_iv)<\ell(v)$.
If $s_i\beta$ is not a positive root, then $\beta=\alpha_i$, because $\pm\alpha_i$ are the only roots whose sign changes under the action of $s_i$.
This is ruled out by hypothesis, so $s_i\beta$ is positive.
We have $s_i\beta\in C_{s_ics_i}(s_iv)$ by \eqref{Cc def}, and we will show that $\omega_{s_ics_i}(\,\cdot\,,s_i\beta)\not\in\Cone_{s_ics_i}(s_iv)$.

If $s_i\beta\not\in\set{\alpha_1,\ldots,\alpha_n}$, then by induction on $\ell(v)$, the vector $\omega_{s_ics_i}(\,\cdot\,,s_i\beta)$ is not in $\Cone_{s_ics_i}(s_iv)$.
If on the other hand $s_i\beta=\alpha_j$ for some $j$, then $i\neq j$ because otherwise $\beta=-\alpha_i$.
We have $\omega_{s_ics_i}(\,\cdot\,,s_i\beta)=\sum_{k=1}^nb'_{kj}\rho_k$, where $B'=[b'_{ij}]$ is the exchange matrix defined by $A$ and $s_ics_i$.
The Cartan matrix entry $a_{ij}$ is nonzero, because otherwise $\beta=s_i\alpha_j=\alpha_j$.
Since $s_i$ is final in $s_ics_i$, we have $b'_{ij}<0$, and we see that $\omega_{s_ics_i}(\alpha_i\ck,s_i\beta)<0$.
But $\ell(s_iv)<\ell(v)$, so $s_i$ is an inversion of $v$, so that $\alpha_i^\perp$ separates $D$ from $vD$.
Therefore $\alpha_i^\perp$ does not separate $D$ from $s_ivD$.
Since $s_ivD\subseteq\Cone_{s_ics_i}(s_iv)$, which does not cross $\alpha_i^\perp$, we see that $\Cone_{s_ics_i}(s_iv)$ is contained in $\set{p\in V^*:\br{p,\alpha_i\ck}\ge0}$.
However, we have calculated that $\omega_{s_ics_i}(\,\cdot\,,s_i\beta)\not\in\set{p\in V^*:\br{p,\alpha_i\ck}\ge0}$.
Thus in this case also $\omega_{s_ics_i}(\,\cdot\,,s_i\beta)\not\in\Cone_{s_ics_i}(s_iv)$.

In either case, $\Cone_c(v)=s_i\Cone_{s_ics_i}(s_iv)$ by \eqref{cone recur}.
But $\omega_c(\,\cdot\,,\beta)$ is equal to $s_i\omega_{s_ics_i}(\,\cdot\,,s_i\beta)$ by Lemma~\ref{omega s dual}, so $\omega_c(\,\cdot\,,\beta)$ is not in $\Cone_c(v)$.

It remains to check the case where $\ell(s_iv)>\ell(v)$.
In this case, $C_c(v)=C_{s_ic}(v)\cup\set{\alpha_i}$, so we need only consider positive roots $\beta\in C_{s_ic}(v)\setminus\set{\alpha_1,\ldots,\alpha_n}$.
By induction on $n$, the vector $\omega_{s_ic}(\,\cdot\,,\beta)\in(V_\br{s_i})^*$ is not in $\Cone_{s_ic}(v)$.
Therefore, there exists $\phi\in C_{s_ic}(v)$ such that $\omega_{s_ic}(\phi\ck,\beta)<0$.
In particular, $\phi$ is in the span of $\set{\alpha_1,\ldots,\alpha_n}\setminus\set{\alpha_i}$.
Since $\omega_c(\,\cdot\,,\beta)$ and $\omega_{sc}(\,\cdot\,,\beta)$ differ by a multiple of $\rho_i$, we see that $\omega_{c}(\phi\ck,\beta)<0$, so that $\omega_c(\,\cdot\,,\beta)\not\in\Cone_c(v)$.
\end{proof}

The proof of the last piece of Theorem~\ref{camb scat thm} uses a result \cite[Theorem~9.8]{typefree} on the local structure of the $c$-Cambrian fan $\F_c$ to reduce to the case where $n=2$.
We now explain the case of \cite[Theorem~9.8]{typefree} that we need.

Suppose $G$ is a face of $\F_c$ of codimension $2$.
If $p$ is in the interior of $D$ and $q$ is in the relative interior of $G$, then as $\ep>0$ approaches $0$, the vector $q-\ep p$ remains in the interior of some maximal cone $\Cone_c(v)$ of $\F_c$.
We call $v$ the \newword{$c$-sortable element above $G$}.
The cone $G$ is a face of $\Cone_c(v)$, so $G=\Cone_c(v)\cap\beta_1^\perp\cap\beta_2^\perp$ for $\beta_1,\beta_2\in C_c(v)$.
By our choice of $v$, both $\beta_1$ and $\beta_2$ are negative, so $\beta_1=-\beta_{t_1}$ and $\beta_2=-\beta_{t_2}$ for $t_1,t_2\in\cov(v)$.
Let $a_1$ and $a_2$ be the elements of $S$ such that $t_1=va_1v^{-1}$ and $t_2=va_2v^{-1}$.
Write $\sigma_1$ and $\sigma_2$ for the fundamental weights dual to $a_1$ and $a_2$.

The cone $vD$ is contained in $\Cone_c(v)$, and since $t_1,t_2\in\cov(v)$ and $G=\Cone_c(v)\cap\beta_1^\perp\cap\beta_2^\perp$, the intersection $vD\cap G$ is a codimension-$2$ face of $vD$.
Taking $q$ in the relative interior of $vD\cap G$ and $p$ as before, as $\ep>0$ approaches $0$, the vector $q+\ep p$ remains in the interior of some cone $wD$ for $w\in W$.
We call $w$ the \newword{element below $G$}.
We may assume that we numbered $\beta_1$ and $\beta_2$ (and thus $a_1$ and $a_2$) so that $\omega_c(\beta_{wa_1w^{-1}},\beta_{wa_2w^{-1}})\ge0$.
Note that $\set{\beta_{wa_1w^{-1}},\beta_{wa_2w^{-1}}}=\set{\beta_1,\beta_2}$, but it is possible that $\beta_{wa_1w^{-1}}=\beta_2$.

Given any cone $F$ and any $p\in F$, the \newword{linearization $\Lin_p(F)$ of $F$ at $x$} is the set of vectors $q\in V^*$ such that $p+\ep q$ is in $F$ for sufficiently small $\ep>0$.
For any cone $G$ in $\F_c$, we fix $p$ in the relative interior of $G$ and define the \newword{star of $G$ in $\F_c$} to be the collection $\Star_G(\F_c)$ consisting of cones $\Lin_p(F)$ such that $F$ is a cone in $\F_c$ containing $G$.
This collection is independent of the choice of $p$ and is a fan in $V^*$ (and indeed a complete fan, since we are in the finite-type case where $\F_c$ is complete).
In the special case where $G$ has codimension $2$ and continuing the notation from above, \cite[Theorem~9.8]{typefree} says that 
\begin{equation}\label{9.8}
\Star_G(\F_c)=w\bigl(\F_{a_1a_2}\oplus\Span_\reals(\set{\rho_1,\ldots,\rho_n}\setminus\set{\sigma_1,\sigma_2})\bigr).
\end{equation}
Here $a_1a_2$ is a Coxeter element of the parabolic subgroup $W_{\set{a_1,a_2}}$ and $\F_{a_1a_2}$ is the $a_1a_2$-Cambrian fan constructed in $(V_{\set{a_1,a_2}})^*$, which we have identified with the subspace of $V^*$ spanned by $\set{\sigma_1,\sigma_2}$.
The fan $\F_{a_1a_2}\oplus\Span_\reals(\set{\rho_1,\ldots,\rho_n}\setminus\set{\sigma_1,\sigma_2}$ is the collection of cones $F\oplus\Span_\reals(\set{\rho_1,\ldots,\rho_n}\setminus\set{\sigma_1,\sigma_2})$ such that $F$ is a cone in $\F_{a_1a_2}$.

\begin{prop}\label{camb consist}
$\CambScat(A,c)$ is consistent.
\end{prop}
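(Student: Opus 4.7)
The plan is to reduce consistency of $\CambScat(A,c)$ to a local check around each codimension-$2$ cone of $\F_c$, and then to use the local structure theorem \eqref{9.8} to transfer that check to the rank-$2$ case, where it can be verified directly from Figure~\ref{camb scat rk2}.

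First I would invoke the standard joint criterion: by \cite[Proposition~2.4]{scatfan}, consistency can be checked modulo $\m^{k+1}$ for each $k \ge 1$, and in finite type $\CambScat(A,c)$ has only finitely many walls, so a well-known reduction shows consistency is equivalent to the assertion that for each codimension-$2$ cone $G$ of $\F_c$, the path-ordered product around a sufficiently small loop $\gamma$ encircling $G$ in a transverse $2$-plane is the identity. By Proposition~\ref{just x's}, it suffices to verify this on each $x^\lambda$, where each wall-crossing acts by multiplication by $f_\d^{\br{\lambda,\pm\beta_t\ck}}$.

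Next I would fix such a $G$, adopt the notation $v, w, a_1, a_2, \sigma_1, \sigma_2$ from the discussion preceding \eqref{9.8}, and apply that equation. The walls of $\CambScat(A,c)$ through $G$ are the $w$-images of the rays of the rank-$2$ Cambrian fan $\F_{a_1a_2}$ in the plane spanned by $\sigma_1,\sigma_2$, and the ray of $\F_{a_1a_2}$ orthogonal to a positive root $\alpha \in \Phi_{\set{a_1,a_2}}$ gives rise to a wall in $\CambScat(A,c)$ with function $1+\hy^{w\alpha}$. Pulling back by the linear action of $w^{-1}$ on $V^*$ (which sends $\lambda \mapsto w^{-1}\lambda$) and identifying the abstract rank-$2$ $\hy$-variables with the monomials $\hy^{w\alpha}$, the consistency loop around $G$ becomes exactly the consistency loop at the origin in $\CambScat(A|_{\set{a_1,a_2}},a_1a_2)$.

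Finally I would verify consistency of the rank-$2$ Cambrian scattering diagrams in Figure~\ref{camb scat rk2} by direct computation. The $A_1\times A_1$ case is immediate, as the two wall-crossings on $x^\lambda$ commute. The $A_2$, $B_2$, and $G_2$ cases are the classical finite-type rank-$2$ scattering identities (of Kontsevich--Soibelman type); each involves only finitely many walls and can be verified by expanding the product of wall-crossing automorphisms in both directions around the origin and comparing coefficients. The main obstacle is the careful bookkeeping needed to set up the dictionary of the preceding paragraph: one must unpack the recursion \eqref{cone recur} for $\Cone_c(v)$ together with the translation by $w$ in \eqref{9.8} to confirm that the cyclic order in which $\gamma$ meets the walls of $\CambScat(A,c)$ through $G$ agrees, after applying $w^{-1}$, with the cyclic order of rays of $\F_{a_1a_2}$, and that the co-roots $\beta_t\ck$ correspond under $w$ to the rank-$2$ normal co-roots. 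Once this dictionary is in place, the rank-$2$ identities of Figure~\ref{camb scat rk2} lift directly to give consistency of $\CambScat(A,c)$.
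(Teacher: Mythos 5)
Your proposal is correct and follows essentially the same route as the paper's proof: reduce consistency to loops around codimension-$2$ cones $G$, use \eqref{9.8} to identify the walls through $G$ with the $w$-image of a rank-$2$ Cambrian scattering diagram, and verify the rank-$2$ cases of Figure~\ref{camb scat rk2} directly. The only detail the paper spells out that you compress into ``bookkeeping'' is that substituting the full-rank $\hy$-monomials for the rank-$2$ ones multiplies each by a Laurent monomial $x^\lambda$ with $\lambda\in\Span_\integers(\set{\rho_1,\ldots,\rho_n}\setminus\set{\sigma_1,\sigma_2})$, which commutes with the relevant wall-crossings.
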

\begin{proof}  
For the purpose of defining path-ordered products, generic paths in $V^*\setminus\Supp(\CambScat(A,c))$ are indistinguishable from sequences $F_0,\ldots,F_k$ of maximal cones in $\F_c$ with $F_{i-1}$ and $F_i$ adjacent for $i=1,\ldots,k$.
We write $\p_{F_0,\ldots,F_k;c}$ for the path-ordered product $\p_{\gamma,\CambScat(A,c)}$ such that $\gamma$ is a generic path starting in the interior of $F_0$, passing in succession to the interiors of the $F_i$, and finally ending in the interior of $F_k$.
Consistency of $\CambScat(A,c)$ means showing that $\p_{F_0,\ldots,F_k;c}$ is the identity whenever $F_0=F_k$.
Checking this is easily reduced to the case where $F_0,\ldots,F_k$ visits exactly once each maximal cone that contains some fixed codimension-$2$ cone $G$.

We continue the notation that was used above to define the $c$-sortable element above $G$, the element below $G$, etc.
Since every wall crossed by $F_0,\ldots,F_k$ contains $G$, the path-ordered product $\p_{F_0,\ldots,F_k;c}$ acts as the identity on $x^\lambda$ when $\lambda$ is in the linear span of $G$.
The span of $\set{w\sigma_1,w\sigma_2}$ is complementary to the linear span of~$G$, so it is enough to check that $\p_{F_0,\ldots,F_k;c}$ acts as the identity on $x^{w\sigma_1}$ and $x^{w\sigma_2}$.

We write $f_1,\ldots,f_k$ for the sequence of wall functions encountered in evaluating $\p_{F_0,\ldots,F_k;c}$.
Let $f'_1,\ldots,f'_\ell$ be the sequence of wall functions encountered along a cycle about the origin in $\CambScat(A',a_1a_2)$, where $A'$ is the restriction of $A$ to the rows and columns of $a_1$ and $a_2$.
Then \eqref{9.8} implies that $\ell=k$ and (when we choose the right starting point for the cycle in $\CambScat(A',a_1a_2)$) that if $f'_i=1+\hy^\beta$ then $f_i=1+\hy^{w\beta}$.
We can check consistency in each rank-$2$ case (shown in Figure~\ref{camb scat rk2}) or compare with results of \cite{CGMMRSW} to see that $\CambScat(A',a_1a_2)$ is consistent.
In particular, the path-ordered product for the cycle in $\CambScat(A',a_1a_2)$ fixes both $x^{\sigma_1}$ and $x^{\sigma_2}$.
The path-ordered product on the corresponding cycle $F'_0,\ldots,F'_k$ in $\CambScat(A,c)$ differs only by replacing the functions $\hy$ for $A'$ with the corresponding functions $\hy$ for $A$.
The effect is to multiply each $\hy_i$ by a Laurent monomial $x^\lambda$ with $\lambda\in\Span_\integers(\set{\rho_1,\ldots,\rho_n}\setminus\set{\sigma_1,\sigma_2})$, so the replacement commutes with the wall-crossing automorphisms encountered on this cycle.
We see that $\p_{F'_0,\ldots,F'_k;c}$ acts as the identity.
Now, by \eqref{9.8}, using Proposition~\ref{just x's} to avoid worrying about values of $\omega_c$, and keeping in mind the definition of the dual action, we see that $\p_{F_0,\ldots,F_k;c}$ acts as the identity as well.
\end{proof}

This completes the proof of Theorem~\ref{camb scat thm}

\begin{remark}\label{g method}
When $B$ is acyclic and of finite type, the $c$-Cambrian fan coincides with $\gFan(B)$.
(This was conjectured and partially proved in \cite[Section~10]{camb_fan} and proved first in \cite{YZ} and then as \cite[Corollary~5.16]{framework}.)
By Theorem~\ref{clus mon thm} (which follows from results of \cite{GHKK}), in finite type, the transposed scattering diagram coincides with $\gFan(B)$.
In particular, the $c$-Cambrian fan and the transposed scattering diagram coincide, and Theorem~\ref{camb scat thm} follows immediately by Theorem~\ref{clus easy root}.
Here we have chosen to make the connection between Cambrian fans and scattering diagrams directly, rather than connecting them indirectly via $\g$-vector fans.

However, using the $\g$-vector fan may be useful in not-necessarily-acyclic finite type.
In that setting as well, if one constructs $\gFan(B)$, then one obtains the transposed cluster scattering diagram easily by putting functions on the codimension-$1$ faces according to Theorem~\ref{clus easy root}.
For example, the main result of \cite{cyclica} is a construction of the $\g$-vector fan for the oriented cycle (a non-acyclic exchange matrix of finite type D) using a root system of affine type A.
In that case, putting the function $1+\hy^\beta$ on each codimension-$1$ face of the fan normal to the root $\beta$ yields a scattering diagram.

Alternately, still in not-necessarily-acyclic finite type, if one constructs what one thinks is the $\g$-vector fan, then one can prove that it is indeed the $\g$-vector fan by putting functions on the codimension-$1$ faces according to Theorem~\ref{clus easy root}, then showing consistency, checking that each $\alpha_i^\perp$ is covered by codimension-$1$ faces, and checking that all other walls are outgoing.
(Following this idea in acyclic finite type, our proof of Theorem~\ref{camb scat thm} amounts to a new proof that the $c$-Cambrian fan is the $\g$-vector fan in finite type.)
\end{remark}

\begin{remark}\label{comput}
In light of Theorem~\ref{camb scat thm}, we can compute cluster variables (or more generally cluster monomials) in acyclic finite type as explained in Theorems~\ref{clus mon thm} and~\ref{clus mon pop root}.
Thus the cluster monomial with $\g$-vector $\lambda$ (encoded as usual as an element of the weight lattice) is $\thet_\lambda$, as defined in Section~\ref{trans clus sec}.
Alternately, this cluster monomial is $\p_{q,p,\CambScat(B)}(x^\lambda)$, where $p$ is any point in the interior of $D$ and $q$ is any point in a maximal cone $F$ of $\gFan(B)$ with $\lambda\in F$.
We briefly discuss the computation of $\p_{q,p,\CambScat(B)}(x^\lambda)$.
The discussion requires some background on the weak order and $c$-sortable elements which can be found in any of the references on sortable elements or Cambrian fans already mentioned.

We compute $\p_{q,p,\CambScat(B)}(x^\lambda)$ using a sequence of maximal cones of $\F_c$, as in the proof of Theorem~\ref{camb scat thm}.
Choose some $c$-sortable element $v_0$ such that $\lambda\in\Cone_c(v_0)$.
We want a sequence $v_0,\ldots,v_k$ such that $\Cone_c(v_{i-1})$ and $\Cone_c(v_i)$ are adjacent for each $i=1,\ldots,k$ and such that $\Cone_c(v_k)=D$, or equivalently $v_k$ is the identity.
Ideally, we would choose $v_0$ and the rest of the sequence to minimize~$k$.
However, it is easier to choose $v_0$ to be the unique minimal (in weak order) $c$-sortable element $v_0$ such that $\lambda\in\Cone_c(v_0)$, require that $\ell(v_i)<\ell(v_{i-1})$ for all $i$ and \emph{then} minimize $k$.
A heuristic to make $k$ small is as follows:
For each $v_{i-1}$, by \cite[Proposition~3.11]{typefree}, there exists $t\in\cov(v_{i-1})$ such that $\omega_c(\beta_t,\beta_{t'})\ge0$ for all $t'\in\cov(v_{i-1})$.
Define $v_i$ to be the unique maximal $c$-sortable element that is below $tv_{i-1}$ in the weak order.
Now \cite[Proposition~3.11]{typefree} suggests that we are minimizing $k$, subject to $\ell(v_i)<\ell(v_{i-1})$ for all $i$.
\end{remark}

\subsection{Shards}
Our construction of $\CambScat(A,c)$ made each codimension-$1$ face of the $c$-Cambrian fan into a wall.
Typically, there is more than one codimension-$1$ face orthogonal to each root, so one expects that one could combine some of the codimension-$1$ faces into a smaller number of walls.
In this section, we show how to construct a scattering diagram equivalent to $\CambScat(A,c)$---and thus equivalent to $\Scat^T(B)$---with exactly one wall orthogonal to each root.

The walls in this scattering diagrams will be \newword{shards}---or, more specifically, the shards not removed by the $c$-Cambrian congruence $\Theta_c$.
Shards play a role in the combinatorics, geometry, and lattice theory of the weak order on a finite Coxeter group, and play a key role in papers including \cite{bancroft,BCHT,petersen,hyperplane,hplanedim,congruence,sort_camb,shardint}.
The $c$-Cambrian congruence is the key player in a lattice-theoretic approach \cite{cambrian,sort_camb} to sortable elements and Cambrian fans.
Here, we will not go into details about shards and $c$-Cambrian congruences.
Rather, we simply quote results that make the connection.

First, by definition---see for example \cite[Section~3]{shardint}---shards are \mbox{codimension-$1$} closed convex cones defined by hyperplanes $\beta^\perp$ for $\beta\in\Phi$.
(In \cite{hyperplane,hplanedim}, the shards were relatively open, but in later references, closures were taken as part of the definition.)
Second, again by definition, for each hyperplane $\beta^\perp$, the shards contained in $\beta^\perp$ exactly cover $\beta^\perp$ with non-intersecting relative interiors.
Third, in each hyperplane $\beta^\perp$, the union of all codimension-$1$ faces of the $c$-Cambrian fan that are orthogonal to $\beta$ is a shard \cite[Proposition~8.15]{shardint}.
(Which shard this is depends on~$c$.)
We call this shard ``the shard in $\beta^\perp$ not removed by $\Theta_c$'' for consistency with other references on shards and Cambrian congruences, but since we are not giving any definitions about shards and Cambrian congruences, we can safely think of ``the shard in $\beta^\perp$ not removed by $\Theta_c$'' merely as a very long proper name, and refer to it by a shorter proper name $\Sigma_c(\beta)$.

Combining this information on shards and Cambrian fans with Theorem~\ref{camb scat thm}, we have the following corollary, which shows in particular that $\Scat^T(B)$ is equivalent to a scattering diagram with exactly one wall orthogonal to each positive root.

\begin{cor}\label{shard scat}
If $B$ is acyclic of finite type with associated Cartan matrix $A$ and Coxeter element $c$, then $\Scat^T(B)$ is equivalent to $\set{(\Sigma_c(\beta),1+\hy^\beta):\beta\in\Phi_+}$.
\end{cor}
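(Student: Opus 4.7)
The plan is to deduce Corollary~\ref{shard scat} directly from Theorem~\ref{camb scat thm} together with the structural fact (quoted from \cite[Proposition~8.15]{shardint}) that, inside each hyperplane $\beta^\perp$, the union of the codimension-$1$ faces of $\F_c$ orthogonal to $\beta$ is precisely the shard $\Sigma_c(\beta)$. By Theorem~\ref{camb scat thm}, it suffices to show that $\CambScat(A,c)$ is equivalent to $\D:=\set{(\Sigma_c(\beta),1+\hy^\beta):\beta\in\Phi_+}$. The criterion quoted from \cite[Lemma~1.9]{GHKK} reduces this to checking that $f_p(\CambScat(A,c))=f_p(\D)$ at every general point $p\in V^*$.

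First I would verify that $\D$ is a scattering diagram, which is immediate: each $\Sigma_c(\beta)$ is a codimension-$1$ closed convex cone cut out by inequalities in $Q$ (being a union of such cones from $\F_c$ lying in $\beta^\perp$), and $\Phi_+$ is finite, so the finiteness condition defining a scattering diagram is trivial. Second, fix a general $p$, so $p$ lies in at most one hyperplane $\beta^\perp$ with $\beta\in Q^+$. If $p$ is in no such hyperplane, then $p$ lies on no wall of either diagram and both $f_p$'s are~$1$. Otherwise let $\beta$ be the unique primitive vector in $Q^+$ with $p\in\beta^\perp$. On the $\D$ side, $p$ lies on $(\Sigma_c(\beta),1+\hy^\beta)$ if and only if $p\in\Sigma_c(\beta)$, and lies on no other wall, so $f_p(\D)$ is either $1+\hy^\beta$ or $1$. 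On the $\CambScat(A,c)$ side, the walls containing $p$ are exactly the codimension-$1$ faces of $\F_c$ orthogonal to $\beta$ that contain $p$; each carries the function $1+\hy^\beta$. Because shards in $\beta^\perp$ cover $\beta^\perp$ with disjoint relative interiors, and because $p$ is general (hence lies in the relative interior of whichever shard of $\beta^\perp$ contains it), there is at most one such face containing $p$, and there is one if and only if $p\in\Sigma_c(\beta)$. Hence $f_p(\CambScat(A,c))=f_p(\D)$ in every case.

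The only subtle point is the appeal to \cite[Proposition~8.15]{shardint}: one must know that, first, shards partition each hyperplane into closed convex pieces whose relative interiors are disjoint, and second, that $\Sigma_c(\beta)$ is literally the union of the codimension-$1$ faces of $\F_c$ inside $\beta^\perp$. Once those two facts are in hand, the argument is the routine wall-combination above. No additional analysis of which roots are positive/negative is needed, since $\beta$ and $-\beta$ produce the same hyperplane and $\beta\in\Phi_+$ is just a choice of orientation for the exponent in $1+\hy^\beta$.

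The step I expect to be the main obstacle, if anything, is purely expository: namely justifying \emph{why} we are allowed to collapse many parallel walls of $\CambScat(A,c)$ into a single wall $(\Sigma_c(\beta),1+\hy^\beta)$ without changing the equivalence class. This is precisely what the $f_p$-criterion provides, and the general-point hypothesis is what rules out overlaps that would cause the product $f_p$ to pick up a wrong power of $1+\hy^\beta$. With that in place, Corollary~\ref{shard scat} follows immediately from Theorem~\ref{camb scat thm}.
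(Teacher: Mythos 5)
Your proposal is correct and takes essentially the same route as the paper, which presents the corollary as an immediate consequence of Theorem~\ref{camb scat thm} together with the quoted facts about shards (in particular \cite[Proposition~8.15]{shardint}); your write-up just makes the wall-merging explicit via the $f_p$-criterion, which is exactly the intended mechanism.

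One sub-justification is imprecise. Knowing that a general $p\in\beta^\perp$ lies in the relative interior of $\Sigma_c(\beta)$ does not by itself show that $p$ lies in at most one codimension-$1$ face of $\F_c$: several such faces tile a single shard, so $p$ could a priori sit on the common boundary of two of them while still being interior to the shard. The correct reason is that such a common boundary is a face of $\F_c$ of codimension at least $2$, hence contained in $\gamma_1^\perp\cap\gamma_2^\perp$ for two distinct roots $\gamma_1,\gamma_2\in C_c(v)$ (using that $\Cone_c(v)$ is simplicial with facet normals in $C_c(v)\subset\Phi$), contradicting the genericity of $p$. This matters because two coincident faces would make $f_p(\CambScat(A,c))=(1+\hy^\beta)^2$ rather than $1+\hy^\beta$. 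With that one sentence repaired, the argument is complete.
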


Corollary~\ref{shard scat} makes possible explicit constructions or computations of scattering diagrams in finite acyclic type.
Explicit inequalities defining shards can be derived using \cite[Lemma~3.7]{congruence}.
Details in type A are found in \cite{bancroft,petersen}.

%

Each hyperplane $\beta^\perp$ (for $\beta\in\Phi_+$) typically has many shards, and exactly one of them is a wall in the scattering diagram $\set{(\Sigma_c(\beta),1+\hy^\beta):\beta\in\Phi_+}$.
If the decomposition of $\beta^\perp$ into shards is known, we can pick out the shard $\Sigma_c(\beta)$ using the bilinear form $\omega_c$.
Call a shard (or a wall) $\Sigma$ \newword{gregarious} if the vector $-\omega_c(\,\cdot\,,\beta)$ is in the relative interior of $\Sigma$.
Since the relative interiors of shards in $\beta^\perp$ are disjoint, there can be at most one gregarious shard in~$\beta^\perp$.

\begin{remark}\label{greg remark}
The term ``gregarious'' alludes to the notion of an outgoing wall in a scattering diagram.
Recall that a wall $(\d,f_\d(\hy^\beta))$ is said to be outgoing if the vector $\omega_c(\,\cdot\,,\beta)\in V^*$ is not in $\d$.
Thus in most cases, a wall containing the opposite vector $-\omega_c(\,\cdot\,,\beta)$ is particularly outgoing.
A gregarious shard $\Sigma$ can fail to be outgoing, but this only happens when there are no other shards in $\beta^\perp$, or in other words when $\Sigma$ is all of $\beta^\perp$.
To avoid taking the analogy too far, we will not call the unique gregarious shard in $\beta^\perp$ the ``lonely'' shard.
\end{remark}

\begin{prop}\label{unique greg}
If $B$ is acyclic of finite type, then for each positive root $\beta$, the shard $\Sigma_c(\beta)$ is the unique gregarious shard in~$\beta^\perp$.
\end{prop}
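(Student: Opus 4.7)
Uniqueness is immediate from the disjointness of the relative interiors of distinct shards in $\beta^\perp$ already noted. For existence, I would use a double induction on the pair (rank $n$, depth of $\beta$), treating simple and non-simple positive roots separately.

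If $\beta = \alpha_i$ is a simple root, then no interior of any cone $\Cone_c(v)$ meets $\alpha_i^\perp$, and since $\F_c$ is complete the hyperplane $\alpha_i^\perp$ is tiled by codimension-$1$ faces of $\F_c$, each orthogonal to $\alpha_i$ and hence contained in $\Sigma_c(\alpha_i)$. Therefore $\Sigma_c(\alpha_i) = \alpha_i^\perp$, whose relative interior is itself, and skew-symmetry of $\omega_c$ places $-\omega_c(\cdot, \alpha_i)$ there.

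For non-simple $\beta \in \Phi_+$ of depth $d$ (the minimal $\ell(w)$ with $w$ sending a simple root to $\beta$), I split into two subcases. If some initial simple reflection $s = s_i$ of $c$ satisfies $\beta \notin \Phi_{\langle s\rangle} \cup \{\alpha_i\}$ with $s\beta \in \Phi_+$ of strictly smaller depth, then every $c$-sortable $v$ with $\beta \in C_c(v)$ must satisfy $\ell(sv) < \ell(v)$: otherwise $v \in W_{\langle s\rangle}$ and $C_c(v) \subseteq \Phi_{\langle s\rangle} \cup \{\alpha_i\}$, which excludes $\beta$. For such $v$, the recursion $\Cone_c(v) = s\Cone_{scs}(sv)$ yields $\Cone_c(v) \cap \beta^\perp = s(\Cone_{scs}(sv) \cap (s\beta)^\perp)$, and assembling over $v$ gives $s\,\Sigma_c(\beta) = \Sigma_{scs}(s\beta)$. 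Lemma~\ref{omega s dual} then gives $s(-\omega_c(\cdot, \beta)) = -\omega_{scs}(\cdot, s\beta)$, and the inner inductive hypothesis at depth $d-1$ places the latter in $\relint \Sigma_{scs}(s\beta)$; applying $s^{-1}$, which preserves relative interiors since it is linear, concludes the subcase. Otherwise every initial $s$ of $c$ has $\beta \in \Phi_{\langle s\rangle}$, in which case $\beta$ lies in the proper parabolic root subsystem generated by the non-initial simple reflections (proper since every Coxeter element has at least one initial reflection). The recursion $\Cone_c(v) = \Span_{\reals_{\ge 0}}(\Cone_{sc}(v) \cup \{\rho_i\})$ for $v \in W_{\langle s\rangle}$, together with the fact that $\omega_c(\cdot, \beta)$ and $\omega_{sc}(\cdot, \beta)$ differ by a multiple of $\rho_i \in \beta^\perp$, reduces the subcase to the outer rank-induction hypothesis applied inside the parabolic subsystem.

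The main obstacle is the parabolic subcase: one must carefully combine, over all sortables contributing to $\Sigma_c(\beta)$, the contributions from $v \in W_{\langle s \rangle}$ (handled via the $\reals_{\ge 0}\rho_i$ extension) and from $v$ with $\ell(sv) < \ell(v)$ (handled via the $s$-transport), and verify that $-\omega_c(\cdot, \beta)$ lands in the union of their common face interiors rather than on a lower-dimensional stratum. This bookkeeping is analogous in spirit to the proof of Proposition~\ref{camb out} but requires checking several recursions in tandem.
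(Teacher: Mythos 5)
Your overall architecture (uniqueness from disjointness of relative interiors of shards, the simple-root base case $\Sigma_c(\alpha_i)=\alpha_i^\perp$, transport by an initial reflection, and a parabolic reduction for rank induction) matches what the paper indicates, but the paper only sketches this proof, and your sketch has a genuine flaw in the induction scheme: the two subcases for non-simple $\beta$ are not exhaustive, so the recursion can stall. An initial reflection $s=s_i$ of $c$ need not strictly decrease the depth of $\beta$ even when $\beta\notin\Phi_{\br{s}}\cup\set{\alpha_i}$; it can fix $\beta$ or increase its depth. Concretely, in type $A_3$ take $c=s_2s_1s_3$, whose only initial reflection is $s_2$, and let $\beta=\alpha_1+\alpha_2+\alpha_3$ be the highest root. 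Then $K(\alpha_2\ck,\beta)=-1+2-1=0$, so $s_2\beta=\beta$ (no depth decrease), while $\beta\notin\Phi_{\br{s_2}}$, so neither of your subcases applies. The repair is to organize the induction as the paper does for Proposition~\ref{camb out}: induct on the length of a $c$-sortable witness rather than on the root --- for instance on the minimal $\ell(v)$ over $c$-sortable $v$ with $\beta=\beta_t$ for some $t\in\cov(v)$. If $\ell(sv)<\ell(v)$ one passes to $(scs,\,s\beta,\,sv)$ and the witness length drops by one regardless of what happens to the depth of $s\beta$; if $\ell(sv)>\ell(v)$ then $v\in W_{\br{s}}$ forces $\beta\in\Phi_{\br{s}}$ and one inducts on rank as you describe.

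Separately, your transport identity $s\,\Sigma_c(\beta)=\Sigma_{scs}(s\beta)$ is asserted from the cone recursion but only argues one inclusion: you transport the faces $\Cone_c(v)\cap\beta^\perp$ for $c$-sortable $v$ with $\ell(sv)<\ell(v)$, but you also need that \emph{every} codimension-$1$ face of $\F_{scs}$ orthogonal to $s\beta$ arises as the image of such a face (note $s$ is final, not initial, in $scs$, so the recursion \eqref{Cc def} does not apply directly on that side), and, to compare relative interiors of shards on both sides, that $s$ carries the shard decomposition of $\beta^\perp$ onto that of $(s\beta)^\perp$. This is precisely the content of \cite[Observation~4.7]{sort_camb}, which the paper explicitly identifies as the additional ingredient beyond the argument of Proposition~\ref{camb out}; your sketch does not supply a substitute for it, and your closing paragraph correctly senses, but does not resolve, where the real bookkeeping lies.
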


The proof of Proposition~\ref{unique greg} requires additional background on shards that we will not give here.
The structure of the proof is an induction on length and rank similar to the proof of Proposition~\ref{camb out}.
However, since we are dealing with shards instead of walls of the $c$-Cambrian fan, we need \cite[Observation~4.7]{sort_camb}, which describes how the decomposition of hyperplanes into shards is affected by the action of $s$.

We rephrase Proposition~\ref{unique greg} as a statement about scattering diagrams.

\begin{cor}
If $B$ is acyclic of finite type, then $\Scat^T(B)$ can be constructed entirely of gregarious walls, with exactly one wall in each hyperplane $\beta^\perp$, where $\beta$ runs over all positive roots.
\end{cor}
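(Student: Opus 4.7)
The plan is to simply combine the two immediately preceding results. By Corollary~\ref{shard scat}, the scattering diagram $\D = \set{(\Sigma_c(\beta),1+\hy^\beta):\beta\in\Phi_+}$ is equivalent to $\Scat^T(B)$. By construction this $\D$ already has exactly one wall in each hyperplane $\beta^\perp$ for $\beta\in\Phi_+$, and since every wall of a scattering diagram is normal to some $\beta\in Q^+$, in fact to a root by Theorem~\ref{root thm}, this accounts for every hyperplane $\beta^\perp$ that could possibly carry a wall.

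It then remains only to observe that each such wall $(\Sigma_c(\beta),1+\hy^\beta)$ is gregarious. But this is precisely the content of Proposition~\ref{unique greg}: the shard $\Sigma_c(\beta)$ contains $-\omega_c(\,\cdot\,,\beta)$ in its relative interior, which is the definition of a gregarious wall in $\beta^\perp$ as given immediately before Remark~\ref{greg remark}. Thus every wall of the equivalent scattering diagram $\D$ is gregarious, completing the proof.

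There is essentially no obstacle once Corollary~\ref{shard scat} and Proposition~\ref{unique greg} are in hand; the only step to exhibit is the matching of definitions. The real work, of course, is hidden upstream: Proposition~\ref{unique greg} itself requires the induction on length and rank sketched in the remark following it, analogous to the proof of Proposition~\ref{camb out} but using \cite[Observation~4.7]{sort_camb} in place of the recursion \eqref{cone recur} to track how the shard decomposition of hyperplanes transforms under the action of an initial simple reflection $s$ of $c$. That inductive argument is the genuine mathematical content behind the corollary.
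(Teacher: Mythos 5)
Your proposal is correct and matches the paper exactly: the corollary is stated there as an immediate rephrasing of Proposition~\ref{unique greg} combined with Corollary~\ref{shard scat}, with no further argument needed. (The appeal to Theorem~\ref{root thm} is superfluous---and that theorem is only stated for skew-symmetric $B$---but nothing in your argument depends on it, since Corollary~\ref{shard scat} already supplies the equivalent diagram with one wall per hyperplane $\beta^\perp$, $\beta\in\Phi_+$.)
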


For trivial reasons, the property that every wall is gregarious also holds in rank~$2$.

%
%

\section*{Acknowledgments}
Thanks to Gregg Musiker for helpful suggestions and to Tom Sutherland for helpful answers to questions.
Thanks to Dylan Rupel and Salvatore Stella for correcting errors in an earlier version.
Thanks to the anonymous referees for helpful comments and suggestions.

\end{document}